\documentclass[a4paper,12pt, twoside]{amsart}
\usepackage{amsmath,amssymb,amsthm}
\usepackage{amscd}
\usepackage{color}
\usepackage{comment}
\usepackage{graphicx}	
\usepackage[all]{xy}
\usepackage{extarrows}
\usepackage{blkarray}
\usepackage{ulem}
\usepackage{tikz}

\theoremstyle{definition}
\newtheorem{dfn}{Definition}[section]
\newtheorem{thm}[dfn]{Theorem}
\newtheorem{prop}[dfn]{Proposition}
\newtheorem{lem}[dfn]{Lemma}
\newtheorem{cor}[dfn]{Corollary}

\theoremstyle{remark}

\numberwithin{equation}{section}

\newcommand\bK{{\mathbb{K}}}
\newcommand\bQ{{\mathbb{Q}}}
\newcommand\bR{{\mathbb{R}}}
\newcommand\bZ{{\mathbb{Z}}}

\newcommand\ch{\mathrm{ch}\,}

\newcommand\Gwt{\Gamma^{\mathbf{wt}}}
\newcommand\HK{H_{\mathbb{K}}}
\newcommand\Hom{\mathrm{Hom}}
\newcommand\inv{^{-1}}
\newcommand\Ker{\mathrm{Ker}}

\newcommand\La{\Lambda}
\newcommand\Mor{\mathrm{Mor}}

\newcommand\pa{\partial}
\newcommand\rot{\operatorname{rot}}
\newcommand\Sgone{\Sigma_{g,1}}

\title{$\mathbb{K}$-framings and $\mathbb{K}$-quadratic forms on surfaces}
\author{Nariya Kawazumi}
\address{Department of Mathematical Sciences, University of Tokyo, 3-8-1 Komaba, Meguro-ku, Tokyo 153-8914, Japan \texttt{e-mail:kawazumi@ms.u-tokyo.ac.jp}}
\date{} 

\subjclass{57K20; 57R15}

\keywords{spin structure, framing, loop operation, mapping class group, Johnson homomorphism}

\dedicatory{in honor of Mikio Furuta on the occasion of his 65th birthday}

\begin{document}

\maketitle

\begin{abstract} We introduce the notions of $\mathbb{K}$-framings, based $\mathbb{K}$-framings and relative $\mathbb{K}$-framings of a compact connected oriented surface $\Sigma$ for any commutative ring $\mathbb{K}$ with unit, and a map which maps a based loop on $\Sigma$ to a homology class of its unit tangent bundle $U\Sigma$, which recovers Johnson's lifting in the case $\mathbb{K} = \mathbb{Z}/2$. This generalizes the correspondence between a quadratic form and a spin structure established by Johnson to any commutative ring $\mathbb{K}$ with unit. 
If the genus of $\Sigma$ is positive, we have a bijection between the set of $\mathbb{K}$-framings and the set of some twisted cocycles of the mapping class group of the surface $\Sigma$. 
Through this bijection, in the case where the boundary $\partial\Sigma$ is non-empty and connected, we discuss some relation between $\mathbb{K}$-framings and the extended first Johnson homomorphism. 
\end{abstract}

\bigskip
\begin{center}
Introduction
\end{center}

A spin structure on an oriented surface $\Sigma$ has played essential roles in various  classical contexts, and is important even now, which is illustrated also by the fact that the underlying structure of a super Riemann surface is a spin Riemann surface. In some classical context, it is given as a theta characteristic, which Johnson \cite{J80a} called a {\it quadratic form} on the surface $\Sigma$, while it is also a fiberwise double covering of the unit tangent bundle $U\Sigma$. It was Johnson \cite{J80a} to establish a clear correspondence between these two notions by introducing a lifting $H_1(\Sigma; \bZ/2) \to H_1(U\Sigma; \bZ/2)$. But the lifting is defined by taking a simple closed curve as a representative of each homology class. Hence it is a little bit complicated. 
In this paper, we construct a map $\pi_1(\Sigma) \to H_1(U\Sigma; \bZ)$, which recovers Johnson's lifting, by taking a generic immersed loop as a representative of each element of $\pi_1(\Sigma)$. This should clarify a geometric meaning of Johnson's lifting. \par
In this paper we consider only a compact connected oriented surface $\Sigma$. 
By the classification theorem, it is classified by its genus and the number of its boundary components, so that we denote $\Sigma = \Sigma_{g, n+1}$ if the genus equals $g \geq 0$ and the number of its boundary component is $n+1 \geq 0$. 
If $n \geq 0$, then the fundamental group $\pi_1(\Sigma)$ is free of rank $2g+n$. 
But, for a while, we confine ourselves to the surface $\Sigma_{g,1}$ of genus $g$ with one boundary component. Let $\Gamma_{g,1}$ be the mapping class group of $\Sigma_{g,1}$ fixing the boundary {\it pointwise}. 
For the surface $\Sigma_{g,1}$, the Poincar\'e duality gives a $\Gamma_{g,1}$-equivariant isomorphism $H_1(\Sigma_{g,1}; \bZ) = H^1(\Sigma_{g,1}; \bZ)$.  Morita \cite{Mo89} computed 
$H^1(\Gamma_{g,1}; H^1(\Sigma_{g,1}; \bZ)) = \bZ$ and named its generator $k \in H^1(\Gamma_{g,1}; H^1(\Sigma_{g,1}; \bZ))$, where he did not decide the sign of $k$. Prior to this computation, Earle \cite{E78} introduced a cocycle representing $k$ in the context of the Abel-Jacobi map, so that 
we call the class $k$ {\it the Earle class}. 
Chillingworth \cite{C72a, C72b} already constructed the class $k$ on the Torelli subgroup of $\Gamma_{g,1}$ using nowhere vanishing vector fields on $\Sigma_{g,1}$. 
On the whole mapping class group $\Gamma_{g,1}$, 
Furuta \cite[p.569]{Mo97} remarked that the class $k$ is the extension class of the extension of $\Gamma_{g,1}$-modules
$$
0 \to H^1(\Sigma_{g,1}; \bZ)  \to H^1(U\Sigma_{g,1}; \bZ) \to \bZ \to  0
\quad (\text{exact}).
$$
In other words, the class $k$ is the obstruction to take a $\Gamma_{g,1}$-invariant splitting of the extension. Trapp \cite{Trapp} also gave a similar observation. 
As was observed by Morita \cite{Mo93}, 
the first Johnson homomorphism \cite{J80b} on the Torelli subgroup is extended to a cocycle on the whole mapping class group with coefficients in $\La^3H_1(\Sigma_{g,1}; \bQ)$, and its reduction by the map $\La^3H_1(\Sigma_{g,1}; \bQ) \to H_1(\Sigma_{g,1}; \bQ)$ using the algebraic intersection number equals the class $k$ up to sign \cite[Remark 4.9]{Mo93}. 
One can construct explicit cocycles representing the extended first Johnson homomorphism by using symplectic expansions \cite{Mas12}
of the fundamental group $\pi_1(\Sigma_{g,1})$. 
Thus we like to find a framing corresponding to a given symplectic expansion. 
But the framing is not a usual framing of the  bundle $U\Sigma_{g,1}$, but a $\bQ$-framing in our sense. If $\bK= \bZ/2$, a $\bK$-framing is exactly a spin structure. Thus we consider $\bK$-framings for any commutative ring $\bK$ with unit. 
In this paper we compute explicitly the $\bK$-framing corresponding to any $\bK$-expansion of the fundamental group $\pi_1(\Sgone)$ satisfying a weaker condition than symplectic expansions (Lemma \ref{lem:C}, Corollary \ref{cor:ktauc}). 
The author \cite{Kaw06} constructed a $\bR$-symplectic expansion $\theta^{(C, P_0, v)}$ of $\pi_1(\Sgone)$ canonically associated with any triple $(C, P_0, v)$, where $C$ is a closed compact Riemann surface of genus $g$, $P_0 \in C$ is a point, and $v \in T_{P_0}C$ is a nonzero tangent vector. 
Our computation implies the $\bR$-framing corresponding to $\theta^{(C, P_0, v)}$ depends $(C, P_0, v)$ real analytically. 
It would be very interesting if one could find another geometric meaning of the $\bR$-framing. 
\par
\medskip
Any symplectic expansion provides the formality of the Goldman bracket \cite{G2} on the surface $\Sigma_{g,1}$ \cite{KK14}. 
In order to study the Turaev cobracket \cite{Tu1991} more precisely, 
our previous works \cite{AKKNa, AKKN18, 
AKKNb} introduced its framed version, and proved the formality of the framed Turaev cobracket for any compact connected oriented surface $\Sigma$ with non-empty boundary. 
Furuta's remark stated above made us aware of an essential role of framings in the formality. 
Later Hain \cite{Ha21} gave an alternative proof of the formality, and clarified the algebro-geometric meaning of the relation between the Turaev cobracket and framings. The target of the reduced extended first Johnson homomorphism is not $H^1(\Sigma; \bQ)$ but $H_1(\Sigma; \bQ) = H^1(\Sigma, \pa\Sigma; \bQ)$. They are different if the boundary $\pa\Sigma$ is not connected. 
Hence we introduce the notion of {\it relative} $\bK$-framings in \S\ref{sec:relf}, by modifying relative framings introduced by Randal-Williams \cite{RW14}. 
But the results in \S\ref{sec:exp} and \S\ref{sec:relf} require us to find an appropriate formulation of groupoid version of expansions for the surface $\Sigma$, which is our next problem. 
Here it should be remarked that Taniguchi \cite{Tan25b} studies a groupoid version of the framed based Turaev operation $\mu^\xi_r$ using non-commutative modular vector fields. 
Moreover, for any field $\bK$ of characteristic $0$, Taniguchi \cite{Tan25c} established a natural relation between genus $g$ Gozalez-Drinfel'd associators and $\bK$-framings for any compact connected oriented surface $\Sigma$ with non-empty boundary and any field $\bK$ of characteristic $0$, which covers all possible $\bK$-framings. 
Our desired groupoid expansions should provide the formality of the groupoid version of $\mu^\xi_r$. 
\par

Let $U\Sigma$ be the unit tangent bundle of the surface $\Sigma$, and 
$\Gamma$ its mapping class group fixing the boundary {\it pointwise}. 
A usual framing of $U\Sigma$ is an orientation-preserving trivialization, and can be regarded as a nowhere vanishing vector field on the surface $\Sigma$. 
Let $\bK$ be a commutative ring with unit. We call a splitting as $\bK$-modules of the extension 
$$
0 \to H^1(\Sigma; \bK) \to H^1(U\Sigma; \bK) \to \bK \to 0
\quad (\text{exact})
$$
of $\Gamma$-modules {\it a $\bK$-framing} on the surface $\Sigma$ (\S \ref{sec:frame}), 
and denote by $F(\Sigma; \bK)$ the set of $\bK$-framings on $\Sigma$.
We call the extension class of this extension {\it the $\bK$-Earle class} 
$k_\bK \in H^1(\Gamma; H^1(\Sigma; \bK))$ in \S\ref{sec:k}. 
Any $\bK$-framing $\xi \in F(\Sigma; \bK)$ defines a cocycle $k_\xi \in Z^1(\Gamma; H^1(\Sigma; \bK))$ representing $k_\bK$ in a natural way.
If we regard $k_\bK$ as a subset of $Z^1(\Gamma; H^1(\Sigma; \bK))$ in an obvious way, we have an isomorphism $F(\Sigma; \bK) \cong k_\bK$, $\xi \mapsto k_\xi$, if $g \geq 1$ (Corollary \ref{cor:kK}), while $k_\xi = 0$ for any $\xi \in F(\Sigma; \bK)$ if $g = 0$. 
In \S\ref{sec:pilift} we introduce a refinement 
$\La: \pi_1(\Sigma) \to  H_1(U\Sigma; \bZ)$ of Johnson's lifting.
The map $\La$ descends to a map on the quotient $\pi_1(\Sigma)/\Gwt_3\pi_1(\Sigma)$ (Theorem \ref{thm:wtG}). Here $\Gwt_3\pi_1(\Sigma)$ is the third term of the weight filtration of the fundamental group $\pi_1(\Sigma)$ of the surface $\Sigma$. 
Its composite $q_\xi := \xi\circ\La: \pi_1(\Sigma) \to \bK$ 
with a $\bK$-framing $\xi$ satisfies 
$$
q_\xi(\gamma_1\gamma_2)  = q_\xi(\gamma_1) + q_\xi(\gamma_2) + \gamma_1\cdot\gamma_2
$$
for any $\gamma_1, \gamma_2 \in \pi_1(\Sigma)$, where $\gamma_1\cdot\gamma_2$ is the algebraic intersection number. 
We call $q_\xi$ a {\it $\bK$-quadratic form} on the surface $\Sigma$, 
which is exactly the constant term of the framed based Turaev operation $\mu^\xi_r$ \cite{Tu1979, AKKNa, AKKN18, 
AKKNb, Tan25b}, and can be interpreted as a writhe of an immersed loop normalized by the $\bK$-framing $\xi$. 
We have a natural isomorphism of $H^1(\Sigma; \bK)$-torsors between the set of $\bK$-framings and that of $\bK$-quadratic forms (Proposition \ref{prop:FQ}). 
If $\bK = \bZ/2$, $q_\xi$ descends to the quadratic form on $H_1(\Sigma; \bZ/2)$ associated with the spin structure $\xi$ introduced by Johnson \cite{J80a}. 
For a closed connected oriented surface $\Sigma_{g,0}$ of genus $g \geq 0$, 
by the inclusion isomorphism $H_1(\Sgone; \bK) = H_1(\Sigma_{g,0}; \bK)$, this isomorphism recovers Johnson's result \cite[Theorem 3.A, p.371]{J80a}.
\par
In order to modify the notion of a relative framing given by Randal-Williams \cite{RW14}, we choose a finite subset $P \subset \pa\Sigma$ whose inclusion homomorphism $\pi_0(P) \to \pi_0(\pa\Sigma)$ is an isomorphism, and consider an intermediate pair $(\Sigma, P)$ between $\Sigma$ and $(\Sigma, \pa\Sigma)$. 
We call a $\bK$-framing of $(\Sigma, P)$ a {\it based} $\bK$-framing in \S\ref{sec:frame}. 
We construct a refinement of Johnson's lifting for $(\Sigma, P)$ in \S\ref{sec:lift}, and establish an isomorphism between the set $F(\Sigma, P; \bK)$ of based $\bK$-framings and that of $\bK$-quadratic forms on $(\Sigma, P)$ (Proposition \ref{prop:FQ}).
Similar to the $\bK$-Earle class $k_\bK$, we have an extension class $k^\natural_\bK \in H^1(\Gamma; H^1(\Sigma, P; \bK))$ defined by any element of $F(\Sigma, P; \bK)$. For their own study on the moduli space of twisted holomorphic $1$-forms on Riemann surfaces, Apisa and Salter \cite[Theorem A.4]{ApisaSalter} computed $H^1(\Gamma; H^1(\Sigma, \pa\Sigma; \bK))$ and proved a natural isomorphism $F(\Sigma, P; \bK) \cong k^\natural_\bK$ for $\Sigma = \Sigma_{g,n+1}$ with $g \geq 3$ and $n+1 \geq 3$. \par
In \S\ref{sec:relf}, we define a {\it relative} $\bK$-framing of $\Sigma$ by a based $\bK$-framing together with data $\rho \in \bK^{\pi_0(\pa\Sigma)}$ of the rotation numbers along all the boundary components. 
It is justified by the fact that the space of trivializations of $U\Sigma$ restricted to each boundary component is a fibration over $S^1$ with fiber $\Omega S^1\simeq \bZ$. The set $F(\Sigma, P, \rho; \bK)$ of relative $\bK$-framings of $\Sigma$ with respect to $\rho$ is non-empty if and only if $\rho$ respects the Poincar\'e-Hopf formula (Lemma \ref{lem:nonempty}). If $F(\Sigma, P, \rho; \bK)$ is non-empty, then any of its elements defines a cohomology class $k_{\rho, \bK} \in H^1(\Gamma; H^1(\Sigma, \pa\Sigma; \bK))$ which is a lift of the $\bK$-Earle class $k_\bK \in H^1(\Gamma; H^1(\Sigma; \bK))$. 
It is our next problem to compute explicitly a relative $\bK$-framing
associated with a given $\bK$-expansion of the fundamental groupoid $\Pi\Sigma\vert_P$ restricted to $P$ similarly to Corollary \ref{cor:ktauc}. \par
The quotient of the set $F(\Sigma; \bK)$ by the action of the mapping class group $\Gamma$ was partially discussed in \cite{RW14, Kaw18}. 
Moreover the action of the hyperelliptic mapping class group on the set of spin structures on a closed surface was discussed also in \cite{Wang1}. 
But it is our future problem how to describe the quotient $F(\Sigma; \bK)/\Gamma$ for any $\Sigma$ and $\bK$. \par

In contrast to contravariant coefficients \cite{S}, it is still unsolved to compute the stable twisted cohomology groups of the mapping class groups with {\it covariant} coefficients. In \cite{KS1, KS2}, Souli\'e and the author computed some of them, where the class $k_\bQ$ plays an essential role. \par

\bigskip
\noindent
{\bf Convention}: Let $\gamma_1$ and $\gamma_2$ be paths on $\Sigma$, and suppose the endpoint of $\gamma_1$ equals the start point of $\gamma_2$. Then we denote by the concatenation $\gamma_1\gamma_2$ the path traversing $\gamma_1$ first, then $\gamma_2$. \par


\bigskip
\noindent
{\bf Acknowledgement}: 
The author would like to thank 
Anton Alekseev,
Yusuke Kuno, 
Sergei Merkulov, 
Florian Naef,
Paul Norbury, 
Kento Sakai, 
Arthur Souli\'e
and Toyo Taniguchi
for helpful discussions, some of which go back to about 10 years ago. 
Moreover he would like to express his gratitude to Nick Salter for letting him know Apisa-Salter \cite{ApisaSalter} and other interesting works.  
%
%
The present research is supported in part by the grants JSPS KAKENHI 
18K03283 
, 19H01784
, 20H00115
, 22H01125
, 22H01120, 23K22391 
and the joint project between OIST (Hikami Unit) and University of Tokyo.

\tableofcontents

%
%

\section{A refinement of Johnson's lifting}\label{sec:lift}
Let $\Sigma$ be a compact connected oriented $C^\infty$ surface. 
By the classification theorem of surfaces, it is classified by the genus $g \geq 0$ and the number $n+1 \geq 0$ of the boundary components, where $n \geq -1$. We also denote $\Sigma = \Sigma_{g,n+1}$. 
If $\pa\Sigma \neq \emptyset$, then we label each of the boundary components as $\pa_j\Sigma$ with $0 \leq j \leq n$. Moreover we choose a basepoint $\ast$ on the $0$-th boundary $\pa_0\Sigma$ and denote $\pi := \pi_1(\Sigma, *)$, which is a free group of rank $2g+n$. 
\par
The unit tangent bundle $\varpi: U\Sigma \to \Sigma$ of the surface $\Sigma$ is defined by using a Riemannian metric on $\Sigma$, but also by the quotient $T^\times \Sigma/\bR_{>0}$ of the complement  $T^\times \Sigma$ of the zero section in the tangent bundle $T\Sigma$ by the scalar multiplication of the positive real numbers $\bR_{>0}$. 
Any diffeomorphism $\varphi$ of $\Sigma$ acts on the space $U\Sigma$  by its differential $d\varphi$ through the latter definition.
We denote the unit interval in $\bR$ by $I = [0,1]$, its interior by $I^\circ := \mathopen]0,1\mathclose[$, and its boundary by $\pa I := \{0,1\}$. 
Any $C^\infty$ immersion $\gamma: I \to \Sigma$ 
induces the velocity vector $\overset\cdot\gamma: t\in I \mapsto (d\gamma)\left(\frac{\pa}{\pa t}\right)_t \bmod \bR_{>0} \in U\Sigma$. 
\par
Johnson \cite{J80a} introduced a canonical lifting 
$H_1(\Sigma; \bZ/2) \to H_1(U\Sigma; \bZ/2)$ of $\varpi_*$
 to identify a quadratic form on $H_1(\Sigma; \bZ/2)$ with a spin structure on $\Sigma$. 
In this section we introduce a refinement of Johnson's lifting which is applicable for $\bK$-framings on $\Sigma$ with $\pa\Sigma \neq \emptyset$ for any commutative ring $\bK$ with unit. See \S3 for the definition of a $\bK$-framing.  If $\bK = \bZ/2$, our refinement recovers Johnson's lifting also for any closed surface $\Sigma$. 
\par
Suppose $\pa\Sigma \neq \emptyset$, and choose a finite subset $P \subset \pa\Sigma$ with $\ast \in P$. Here we do {\it not} assume that the inclusion homomorphism $\pi_0(P) \to \pi_0(\pa\Sigma)$ is an isomorphism. 
Let $\tilde P \subset U\Sigma$ be the set of the negative tangent vectors along $\pa \Sigma$ at all points in $P$. 
The map $\varpi$ restricted to $\tilde P$ is a bijection onto $P$. 
We denote the negative tangent vector at $\bullet \in P$ by $v_{\bullet}\in \tilde P$. 
Our refinement will be defined on the fundamental groupoid $\Pi\Sigma\vert_P$ restricted to the subset $P$,
whose object set is $P$, and the set of morphism $\Pi\Sigma(\bullet_0, \bullet_1)$, $\bullet_0, \bullet_1 \in P$, is the based homotopy set of paths from $\bullet_0$ to $\bullet_1$ on $\Sigma$. We may take a generic immersion $\gamma: (I, 0, 1) \to (\Sigma, \bullet_0, \bullet_1)$ satisfying $\overset\cdot\gamma(0) = v_{\bullet_0}$, $\overset\cdot\gamma(1) = v_{\bullet_1}$ and $\gamma(I^\circ) \subset \Sigma\setminus \pa \Sigma$ as a representative for each element in $\Pi\Sigma(\bullet_0, \bullet_1)$. See Figure 1. Here `generic' means $\gamma\vert_{I^\circ}$ allows only transverse double points as its singularities.  
We call such a generic immersion as a {\it generic representative} of a morphism in $\Pi\Sigma\vert_P$. Here and throughout this paper, by abuse of notation, we denote both of a generic immersion and its homotopy class by the same symbol $\gamma$. 
\par
\begin{figure}[h]
\begin{center}
\begin{tikzpicture}
\draw[thick] (-1.5,0) -- (1.5,0); 
\node at (1,0) {$>$}; 
\node at (-1.0,0.5) {$\circlearrowleft$}; 
\node at (1.0,0.5) {$\Sigma$}; 
\node at (2.0,0.0) {$\partial_{j_1}\Sigma$}; 
\node at (-0.5,0) {$<$};
\node at (-0.3,-0.4) {\small $v_{\bullet_1}$}; 
\coordinate (a_1) at (0,0) node at (a_1) [below] {$\bullet_1$}; 
\fill (a_1) circle (2pt) ;
\draw[thick] (a_1) to [out= 0 , in=270](0.4, 1.0); 
\draw[thick] (0.4, 1.0) -- (0.4, 1.6) node [midway] {$\vee$};
\draw[thick, dashed] (0.4, 1.6) to [out= 90, in=0](-1.0, 3.2); 
\node at (0.8, 1.3) {$\gamma$}; 
\coordinate (b) at (-2.0, 1.7);
\draw[thick] (b) to [out= 180 , in=270](-2.4,2.3); 
\draw[thick] (b) to [out= 0 , in=270](-1.6, 2.3); 
\draw[thick] (-2.4,2.3) to [out= 90 , in=180] (-1.0, 3.2);
\draw[thick] (-1.6, 2.3) to [out= 90 , in=0] (-3.0, 3.2);
\node at (-2.0,1.7) {$<$};
\fill (-2.0,2.95) circle (2pt) ;
\node at (-1.6,2.95) {$+$};
\draw[thick] (-5.5,0) -- (-3,0); 
\node at (-3.3,0) {$>$}; 
\node at (-5.0,0.5) {$\circlearrowleft$}; 
\node at (-2.5,0.0) {$\partial_{j_0}\Sigma$}; 
\node at (-4.5,0) {$<$};
\node at (-4.3,-0.4) {\small $v_{\bullet_0}$}; 
\coordinate (a_0) at (-4.0,0) node at (a_0) [below] {$\bullet_0$}; 
\fill (a_0) circle (2pt) ;
\draw[thick] (a_0) to [out= 180 , in=270](-4.4, 1.0); 
\draw[thick] (-4.4, 1.0) -- (-4.4, 1.6) node [midway] {$\wedge$};
\draw[thick, dashed] (-4.4, 1.6) to [out= 90, in=180](-3.0, 3.2); 
\node at (-4.8, 1.3) {$\gamma$}; 
\end{tikzpicture}
\end{center}
\caption{a generic representative $\gamma$ of an element of $\Pi\Sigma(\bullet_0, \bullet_1)$}
\end{figure}
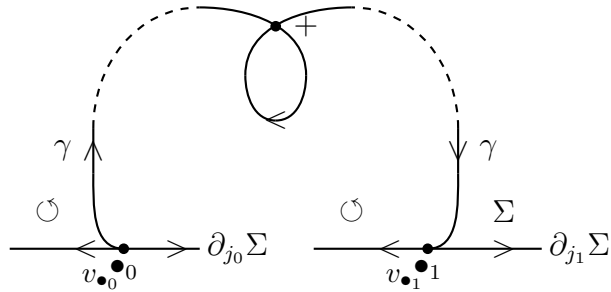
We denote the set of all self-intersection points of $\gamma\vert_{I^\circ}$ by $\Gamma_\gamma$. For $p \in \Gamma_\gamma$, there exist unique $0 < t^p_1 < t^p_2 < 1$ such that $\gamma(t^p_1) = \gamma(t^p_2) = p$ from the genericity of the immersion $\gamma$. 
The local intersection number $\varepsilon_p \in \{\pm1\}$ is defined by comparing the orientation of $\Sigma$ and the oriented basis $(\overset\cdot\gamma(t^p_1),\overset\cdot\gamma(t^p_2))$ of $T_p\Sigma$. 
We denote $\Gamma^+_\gamma := \{p \in \Gamma_\gamma; \,\,\varepsilon_p = +1\}$ and $\Gamma^-_\gamma := \{p \in \Gamma_\gamma; \,\,\varepsilon_p = -1\}$. 
Moreover the fundamental class $[I, \pa I] \in H_1(I, \pa I; \bZ)$ is mapped to the homology class  
$$
\overrightarrow{\gamma} := \overset\cdot\gamma_*[I, \pa I] \in H_1(U\Sigma, \tilde P; \bZ)
$$
by the velocity vector $\overset\cdot\gamma: (I, \pa I) \to (U\Sigma, \tilde P)$.
\par
Let $\iota: S^1 \hookrightarrow U\Sigma$ be the inclusion map of the fiber of $\varpi: U\Sigma \to \Sigma$. Following Johnson \cite{J80a}, we denote $z := \iota_*[S^1] \in H_1(U\Sigma; \bZ)$. 
Then we define a map 
$\La: \Pi\Sigma(\bullet_0, \bullet_1) \to H_1(U\Sigma, \tilde P; \bZ)$
for any $\bullet_0, \bullet_1\in P$ 
by 
\begin{equation}\label{eq:tildeL}
\La(\gamma) := \overrightarrow{\gamma} + \left(\sharp\Gamma^+_{\gamma} - 
\sharp\Gamma^-_{\gamma} + 1\right)z \in H_1(U\Sigma, \tilde P; \bZ)
\end{equation}
for a generic representative $\gamma: (I, 0, 1) \to (\Sigma, \bullet_0, \bullet_1)$ of any element in $\Pi\Sigma\vert_P(\bullet_0, \bullet_1)$. In order to prove the map $\La$ is well-defined, it suffices to check that $\La$ is invariant under the three local moves $(\omega1)$, $(\omega2)$ and $(\omega3)$ in \cite[5.6 Lemma]{G2}. 
The invariance under $(\omega2)$(birth-death of bigons) and $(\omega3)$ (jumping over a double point) is clear, since $\La$ is a regular homotopy invariant. If $p \in \Gamma_\gamma$ is the vertex of a monogon (or a fishtail), the monogon contributes $-\varepsilon_pz$ to the homology class $\overrightarrow{\gamma}$. Hence $\La$ is also invariant under $(\omega1)$ (birth-death of monogons). 
This proves the map $\La$ is well-defined.\par
We denote by $\Mor(\Pi\Sigma\vert_P)$ the set of all morphisms in the groupoid $\Pi\Sigma\vert_P$
$$
\Mor(\Pi\Sigma\vert_P) = {\coprod}_{(\bullet_0, \bullet_1) \in P\times P}\Pi\Sigma(\bullet_0, \bullet_1).
$$
But the map $\La: \Mor(\Pi\Sigma\vert_P) \to H_1(U\Sigma, \tilde P; \bZ)$ is not a functor if we regard the homology group $H_1(U\Sigma, \tilde P; \bZ)$ as a category with a single object. 
But it satisfies some product formula (Lemma \ref{lem:product}), where the algebraic intersection number between paths connecting points in $P$ appears. 
To fix our convention for the algebraic intersection number, 
we take a disjoint family $\{\nu_\bullet\}_{\bullet \in P}$ of orientation-reversing embeddings $\nu_\bullet: [-1, 1] \to \pa\Sigma$ with $\nu_\bullet(0) = \bullet$, 
denote $\bullet^\pm := \nu_\bullet(\pm1) \in \pa\Sigma$. 
Then we define the algebraic intersection number $\gamma_1\cdot\gamma_2$ for any $\gamma_1 \in \Pi\Sigma(\bullet_0, \bullet_1)$ and $\gamma_2 \in \Pi\Sigma(\bullet_1, \bullet_2)$ by the intersection product 
$$
\gamma_1\cdot\gamma_2 := \left[\left(\nu_{\bullet_0}\vert_{[0,1]}\right)\inv\gamma_1\left(\nu_{\bullet_1}\vert_{[0,1]}\right)\right]\cdot 
 \left[\left(\nu_{\bullet_1}\vert_{[-1,0]}\right)\gamma_2\left(\nu_{\bullet_2}\vert_{[-1,0]}\right)\inv\right] \in \bZ
$$
between the homology groups $H_1(\Sigma, A; \bZ)$ and $H_1(\Sigma, B; \bZ)$, 
where $A$ and $B \subset \pa \Sigma$ satisfy $\bullet^+ \in A$ and $\bullet^- \in B$ for any $\bullet \in P$, $A\cup B = \pa\Sigma$ and $\pa A = \pa B = A\cap B$. 
This is justified by the Poincar\'e-Lefschetz duality $H^1(\Sigma, A; \bZ) \cong H_{1}(\Sigma, B; \bZ)$. See, for example, \cite[Theorem 3.43, p.254]{Hat02}. 
If $\bullet_0 = \bullet_1$ or $\bullet_1 = \bullet_2$, then the number $\gamma_1\cdot\gamma_2$ does not depend on our convention. \par
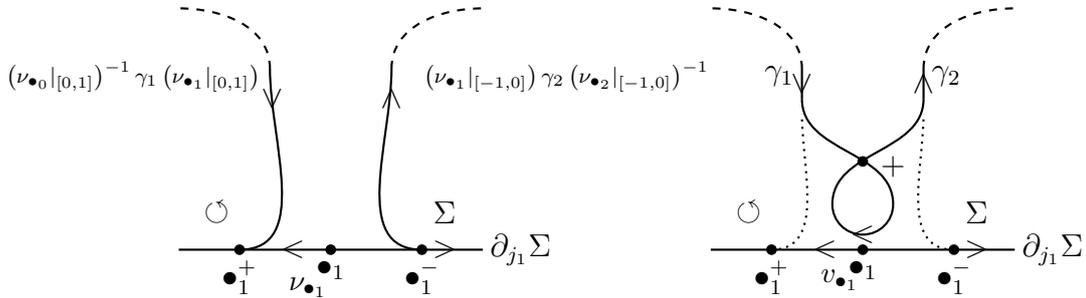
\begin{figure}[h]
\begin{center}
\begin{tikzpicture}
\begin{scope}
\draw[thick] (-2,0) -- (2,0); 
\node at (1.5,0) {$>$}; 
\node at (-1.5,0.5) {$\circlearrowleft$}; 
\node at (1.5,0.5) {$\Sigma$}; 
\node at (2.5,0.0) {$\partial_{j_1}\Sigma$}; 
\node at (-0.5,0) {$<$};
\node at (-0.3,-0.5) {\small $\nu_{\bullet_1}$}; 
\coordinate (a_1) at (0,0) node at (a_1) [below] {$\bullet_1$}; 
\fill (a_1) circle (2pt) ;
\draw[thick, dashed] (0.8, 2.4) to [out= 90, in=180](2.0, 3.2); 
\draw[thick, dashed] (-0.8, 2.4) to [out= 90, in=0](-2.0, 3.2); 
\node at (3.1, 2.3) {\tiny $\left(\nu_{\bullet_1}\vert_{[-1,0]}\right)\gamma_2\left(\nu_{\bullet_2}\vert_{[-1,0]}\right)\inv$}; 
\node at (-2.6, 2.3) {\tiny $\left(\nu_{\bullet_0}\vert_{[0,1]}\right)\inv\gamma_1\left(\nu_{\bullet_1}\vert_{[0,1]}\right)$}; 
\coordinate (a-) at (1.2,0) node at (a-) [below] {$\bullet^-_1$}; 
\fill (a-) circle (2pt) ;
\draw[thick] (a-) to [out= 180 , in=270](0.8,2.4); 
\coordinate (a+) at (-1.2,0) node at (a+) [below] {$\bullet^+_1$}; 
\fill (a+) circle (2pt) ;
\draw[thick] (a+) to [out= 0 , in=270](-0.8,2.4); 
\node at (0.8, 2.0) {$\wedge$};
\node at (-0.77, 2.0) {$\vee$};
\end{scope}
\begin{scope}[xshift=7cm]
\draw[thick] (-2,0) -- (2,0); 
\node at (1.5,0) {$>$}; 
\node at (-1.5,0.5) {$\circlearrowleft$}; 
\node at (1.5,0.5) {$\Sigma$}; 
\node at (2.5,0.0) {$\partial_{j_1}\Sigma$}; 
\node at (-0.5,0) {$<$};
\node at (-0.3,-0.4) {\small $v_{\bullet_1}$}; 
\coordinate (a_1) at (0,0) node at (a_1) [below] {$\bullet_1$}; 
\fill (a_1) circle (2pt) ;
\coordinate (au) at (0,0.2); 
\draw[thick] (au) to [out= 180 , in=270](-0.4,0.6); 
\draw[thick] (-0.4, 0.6) to [out= 90 , in=270] (0.8, 2.0);
\draw[thick] (0.4, 0.6) to [out= 90 , in=270] (-0.8, 2.0);
\node at (0,0.2) {$<$};
\draw[thick] (0.8,2.0) -- (0.8,2.4) node [midway] {$\wedge$};
\draw[thick] (-0.8,2.0) -- (-0.8,2.4) node [midway] {$\vee$};
\draw[thick, dashed] (0.8, 2.4) to [out= 90, in=180](2.0, 3.2); 
\draw[thick] (au) to [out= 0 , in=270](0.4, 0.6); 
\draw[thick, dashed] (-0.8, 2.4) to [out= 90, in=0](-2.0, 3.2); 
\node at (1.1, 2.3) {$\gamma_2$}; 
\node at (-1.1, 2.3) {$\gamma_1$}; 
\fill (0,1.17) circle (2pt) ;
\node at (0.4,1.1) {$+$}; 
\coordinate (a-) at (1.2,0) node at (a-) [below] {$\bullet^-_1$}; 
\fill (a-) circle (2pt) ;
\draw[thick, dotted] (a-) to [out= 180 , in=270](0.8,1.8); 
\coordinate (a+) at (-1.2,0) node at (a+) [below] {$\bullet^+_1$}; 
\fill (a+) circle (2pt) ;
\draw[thick, dotted] (a+) to [out= 0 , in=270](-0.8,1.8); 
\end{scope}
\end{tikzpicture}
\end{center}
\caption{Computation of the algebraic intersection number and the concatenation of $\gamma_1$ and $\gamma_2$}
\end{figure}
Applying a regular homotopy, we have a generic representative $I\to \Sigma$ for the concatenation $\gamma_1\gamma_2$ as in the right hand side in Figure 2. 
If we denote it also by $\gamma_1\gamma_2$, we have 
\begin{equation}\label{eq:shsum}
\sharp\Gamma^+_{\gamma_1\gamma_2} - 
\sharp\Gamma^-_{\gamma_1\gamma_2} 
= \sharp\Gamma^+_{\gamma_1} - 
\sharp\Gamma^-_{\gamma_1} 
+\sharp\Gamma^+_{\gamma_2} - 
\sharp\Gamma^-_{\gamma_2} +1
+\gamma_1\cdot \gamma_2\in \bZ,
\end{equation}
and 
\begin{equation}\label{eq:arsum}
\overrightarrow{\gamma_1\gamma_2}
= \overrightarrow{\gamma_1}
+ \overrightarrow{\gamma_2} \in H_1(U\Sigma, \tilde P; \bZ).
\end{equation}
Consequently we obtain the following product formula.
\begin{lem}\label{lem:product} For any $\gamma_1 \in \Pi\Sigma(\bullet_0, \bullet_1)$ and $\gamma_2 \in \Pi\Sigma(\bullet_1, \bullet_2)$, we have 
$$
\La(\gamma_1\gamma_2) = \La(\gamma_1)+\La(\gamma_2) + (\gamma_1\cdot\gamma_2)z \in H_1(U\Sigma, \tilde P; \bZ).
$$
\end{lem}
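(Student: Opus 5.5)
The lemma follows by substituting \eqref{eq:shsum} and \eqref{eq:arsum} into the definition \eqref{eq:tildeL}, so the work is to justify those two identities for the generic representative of $\gamma_1\gamma_2$ in Figure 2. Since $\La$ is well defined, I am free to compute it on this particular representative.

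I would build it as follows. Take generic representatives $\gamma_1$ and $\gamma_2$ as in Figure 1, ending and starting with velocity $v_{\bullet_1}$ at $\bullet_1$. Near $\bullet_1$, replace the corner of the concatenation by a small arc in a collar. This arc comes off $\gamma_1$ near $\bullet_1^+$, runs parallel to the boundary past $\bullet_1$ with velocity close to $v_{\bullet_1}$, and turns into $\gamma_2$ near $\bullet_1^-$. The two strands then cross once, at the point marked $+$ in Figure 2.

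For \eqref{eq:arsum}, the velocity curve of the concatenation agrees with $\overset\cdot\gamma_1$ and $\overset\cdot\gamma_2$ away from a small neighbourhood of $\bullet_1$. Inside that neighbourhood, the new tangent arc is homotopic rel endpoints in $U\Sigma$ to the union of the ending piece of $\overset\cdot\gamma_1$ and the starting piece of $\overset\cdot\gamma_2$, joined at $v_{\bullet_1}\in\tilde P$. To see this, one checks in the local chart (a half-disc, where $U\Sigma$ is trivial) that the total turning of the new arc equals the sum of the turnings of the two old pieces. The relative chains therefore agree in $H_1(U\Sigma,\tilde P;\bZ)$, since the cycle $\overset\cdot\gamma_1+\overset\cdot\gamma_2$ glued at $v_{\bullet_1}$ is homologous to $\overrightarrow{\gamma_1\gamma_2}$.

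For \eqref{eq:shsum}, the double points of the new curve fall into three groups:
\begin{itemize}
\item the double points of $\gamma_1$ and of $\gamma_2$, with unchanged signs, because the parameter order is preserved;
\item the single new crossing near $\bullet_1$, whose sign is $+1$ with the orientation conventions of Figure 2 (here $\gamma_1$ occurs first in the parameter);
\item the mixed crossings $\gamma_1(I^\circ)\cap\gamma_2(I^\circ)$, each counted with $\varepsilon_p$ defined by the tangent of $\gamma_1$ first and then that of $\gamma_2$.
\end{itemize}
The sum of the mixed signs is exactly the intersection number of the arcs $(\nu_{\bullet_0}|_{[0,1]})^{-1}\gamma_1(\nu_{\bullet_1}|_{[0,1]})$ and $(\nu_{\bullet_1}|_{[-1,0]})\gamma_2(\nu_{\bullet_2}|_{[-1,0]})^{-1}$. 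These are made transverse by pushing their endpoints to $\bullet^\pm$, which lie in the disjoint parts $A$ and $B$ of the boundary. Pushing creates no extra crossings near $\bullet_0$, $\bullet_1$ or $\bullet_2$, because there the displaced ends separate into $A$ and $B$; this is the content of the dotted arcs in Figure 2. The mixed count is therefore $\gamma_1\cdot\gamma_2$, computed via the Poincaré–Lefschetz pairing $H_1(\Sigma,A)\times H_1(\Sigma,B)\to\bZ$.

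Adding the three groups gives \eqref{eq:shsum}. Then
\[
\La(\gamma_1\gamma_2)=\overrightarrow{\gamma_1}+\overrightarrow{\gamma_2}+(n_1+n_2+1+\gamma_1\cdot\gamma_2+1)z,
\]
where $n_i=\sharp\Gamma^+_{\gamma_i}-\sharp\Gamma^-_{\gamma_i}$. This equals $\La(\gamma_1)+\La(\gamma_2)+(\gamma_1\cdot\gamma_2)z$, as claimed.

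The main obstacle is the local bookkeeping at $\bullet_1$. I have to confirm two things at once: that the smoothing contributes no net rotation, so \eqref{eq:arsum} holds with no extra multiple of $z$, and that the new crossing has sign exactly $+1$. These are consistent only with the specific shape in Figure 2, with the loop crossing inward, so I would check both by an explicit local model in the upper half-plane, tracking tangent angles from $v_{\bullet_1}$.
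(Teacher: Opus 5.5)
Your proposal is correct and follows the paper's argument. The paper also takes the generic representative of $\gamma_1\gamma_2$ drawn in Figure 2, reads off \eqref{eq:shsum} and \eqref{eq:arsum} from it, and substitutes into \eqref{eq:tildeL}, so your turning-number check and your sorting of the double points (self-crossings, the single $+$ crossing near $\bullet_1$, and the mixed crossings counted by the $\bullet^\pm$ convention) just spell out what the paper leaves to the picture. One small correction: there is no real corner at $\bullet_1$, because both pieces already have velocity $v_{\bullet_1}$ there; the concatenation is an immersion that only has to be pushed off $\partial\Sigma$, which makes \eqref{eq:arsum} immediate.
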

\par
As will be shown in the next section, our construction is a refinement of Johnson's lifting, 
where $P$ consists only of a single point and the coefficients $\bZ$ is replaced by $\bZ/2$.
\par

\section{The lifting restricted to the fundamental group}\label{sec:pilift}

In this section we confine ourselves to the case where $P$ consists of a single point $\ast \in \pa_0\Sigma$. In particular, any boundary component other than $\pa_0\Sigma$ contains no elements of the set $P$. 
Then our lifting $\La$ is a map from the fundamental group $\pi = \pi_1(\Sigma, \ast) = \Pi\Sigma\vert_P$ to the homology group $H_1(U\Sigma; \bZ) = H_1(U\Sigma, \tilde P; \bZ)$. From the definition, $\varpi_*\circ \La: \pi \to H_1(\Sigma; \bZ)$ equals the abelianization map $\pi \to \pi/\Gamma_2\pi = H_1(\Sigma; \bZ) = H_1(\Sigma, P; \bZ)$. Hence $\La$ and $L$ stated below are liftings of the abelianization map. \par
\begin{lem}\label{lem:commutator} For any $\gamma_1, \gamma_2 \in \pi$, we have
$$
\La(\gamma_1\gamma_2{\gamma_1}\inv{\gamma_2}\inv)
= 2(\gamma_1\cdot\gamma_2)z \in H_1(U\Sigma; \bZ).
$$
\end{lem}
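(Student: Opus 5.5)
The plan is to apply the product formula of Lemma \ref{lem:product} three times, after first computing $\La$ on the identity and on inverses. The intersection numbers that appear are all between loops based at the single point $\ast$. For these, $\gamma_1\cdot\gamma_2$ does not depend on our convention, and it is the ordinary intersection pairing on $H_1(\Sigma;\bZ)$ of the homology classes $[\gamma_1],[\gamma_2]$. Hence it depends only on the classes in $H_1(\Sigma;\bZ)$, it is bilinear, and it is antisymmetric. In particular $\gamma\cdot\gamma = 0$ for every $\gamma\in\pi$.

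First, apply Lemma \ref{lem:product} to $\gamma_1=\gamma_2=1$. Since $1\cdot 1 = 0$, this gives $\La(1)=2\La(1)$, so $\La(1)=0$. Next, apply it to $\gamma$ and $\gamma\inv$:
$$
0=\La(1)=\La(\gamma)+\La(\gamma\inv)+(\gamma\cdot\gamma\inv)z .
$$
Here $\gamma\cdot\gamma\inv = -[\gamma]\cdot[\gamma] = 0$, so $\La(\gamma\inv)=-\La(\gamma)$.

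Now expand the commutator step by step, writing it as $\bigl((\gamma_1\gamma_2)\gamma_1\inv\bigr)\gamma_2\inv$:
$$
\La(\gamma_1\gamma_2\gamma_1\inv\gamma_2\inv)=\La(\gamma_1)+\La(\gamma_2)+\La(\gamma_1\inv)+\La(\gamma_2\inv)+\bigl(\gamma_1\cdot\gamma_2+(\gamma_1\gamma_2)\cdot\gamma_1\inv+(\gamma_1\gamma_2\gamma_1\inv)\cdot\gamma_2\inv\bigr)z .
$$
The four $\La$-terms cancel by the previous step. For the intersection terms, use $[\gamma_1\gamma_2]=[\gamma_1]+[\gamma_2]$, $[\gamma_1\gamma_2\gamma_1\inv]=[\gamma_2]$ and $[\gamma_i\inv]=-[\gamma_i]$. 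The second term becomes $-([\gamma_1]+[\gamma_2])\cdot[\gamma_1] = -\gamma_2\cdot\gamma_1$, and the third becomes $-[\gamma_2]\cdot[\gamma_2]=0$. By antisymmetry the total is $\gamma_1\cdot\gamma_2-\gamma_2\cdot\gamma_1 = 2(\gamma_1\cdot\gamma_2)$, which is the claim.

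The only point needing care, and the main obstacle, is justifying that the intersection number $\cdot$ defined on $\Pi\Sigma\vert_P$ reduces, for $P=\{\ast\}$, to the intersection form on $H_1(\Sigma;\bZ)$ evaluated on abelianized classes. This is what gives bilinearity and antisymmetry, and hence $\gamma\cdot\gamma=0$. To check it, I would observe that when $\bullet_0=\bullet_1=\bullet_2=\ast$ the attached arcs $\nu_\ast\vert_{[0,1]}$ and $\nu_\ast\vert_{[-1,0]}$ can be slid back to $\ast$ through $\pa\Sigma$. The pushed-off representatives then become homologous in $H_1(\Sigma,A;\bZ)$ and $H_1(\Sigma,B;\bZ)$ to the closed loops $\gamma_1,\gamma_2$, and the relative Poincar\'e--Lefschetz pairing restricts to the usual intersection pairing on the images of $H_1(\Sigma;\bZ)$.
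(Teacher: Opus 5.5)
Your proof is correct and rests on the same two ingredients as the paper's: the product formula of Lemma \ref{lem:product}, and the fact that for loops at $\ast$ the intersection number is the alternating bilinear pairing on $H_1(\Sigma;\bZ)$. The paper only shortens the bookkeeping: it applies the product formula to $\gamma_1\gamma_2=(\gamma_1\gamma_2{\gamma_1}\inv{\gamma_2}\inv)(\gamma_2\gamma_1)$, where the intersection term vanishes because the commutator is null-homologous, which gives $\La(\gamma_1\gamma_2{\gamma_1}\inv{\gamma_2}\inv)=\La(\gamma_1\gamma_2)-\La(\gamma_2\gamma_1)$ directly and avoids computing $\La(1)$ and $\La(\gamma\inv)$.
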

\begin{proof}
From Lemma \ref{lem:product}, 
we obtain 
$$
\La(\gamma_1\gamma_2{\gamma_1}\inv{\gamma_2}\inv)
= \La(\gamma_1\gamma_2{\gamma_1}\inv{\gamma_2}\inv) +\La(\gamma_2\gamma_1)- \La(\gamma_2\gamma_1)
= \La(\gamma_1\gamma_2) - \La(\gamma_2\gamma_1) = 2(\gamma_1\cdot\gamma_2)z.
$$
The second equality comes from $(\gamma_1\gamma_2{\gamma_1}\inv{\gamma_2}\inv)\cdot(\gamma_2\gamma_1) = 0$. This proves the lemma.
\end{proof}

In order to show that $\La$ recovers Johnson's lifting, 
we denote the $\bmod\,2$ reduction of the map $\La$ by $\La_2: \pi \to H_1(U\Sigma; \bZ/2)$. Then Lemma \ref{lem:commutator} implies $\La_2\vert_{[\pi, \pi]} = 0$. 
Since $\gamma\cdot\delta =0$ for any $\gamma \in \pi$ and $\delta \in [\pi, \pi]$, $\La_2$ descends to a map $H_1(\Sigma; \bZ/2) = (\pi/[\pi, \pi])\otimes_{\bZ}\bZ/2 \to H_1(U\Sigma; \bZ/2)$, which we denote by the same symbol $\La_2$. 
From \eqref{eq:tildeL}, if $x \in H_1(\Sigma; \bZ/2)$ is represented by a simple closed curve, then we have 
$$
\La_2(x) = \overrightarrow{x} + z \in H_1(U\Sigma; \bZ/2),
$$
where $\overrightarrow{x}$ is the homology class of the velocity vector of the simple closed curve as before. For any $x_1$ and $x_2 \in H_1(\Sigma; \bZ/2)$, we have 
$$
\La_2(x_1+x_2) = \La_2(x_1) + \La_2(x_2) + (x_1\cdot x_2)z \in H_1(U\Sigma; \bZ/2)
$$
from Lemma \ref{lem:product}. These formulae mean that the map $\La_2$ is exactly the same as Johnson's lifting for any compact oriented surface with non-empty boundary.
If $\Sigma$ is a closed oriented surface of genus $g$, the inclusion $\Sigma_{g,1} \hookrightarrow \Sigma$ induces an isomorphism $H_1(\Sigma_{g,1}; \bZ/2) \cong H_1(\Sigma; \bZ/2)$, so that the composite of the map $\La_2$ and the inclusion homomorphism \\
$H_1(U\Sigma_{g,1}; \bZ/2) \to H_1(U\Sigma; \bZ/2)$ coincides with Johnson's lifting \cite{J80a}. \par
\bigskip
If $g = 0$, the map $\La$ descends to $\pi/[\pi, \pi] = H_1(\Sigma_{0,n+1}; \bZ)$ by Lemma \ref{lem:product}. In general, the map $\La$ itself fits to the weight filtration of the fundamental group $\pi = \pi(\Sigma, \ast)$ for $\Sigma = \Sigma_{g,n+1}$. For details of the weight filtration, see, for example, \cite[\S\S 2.6, 3.1 and 3.2]{AKKNb}. 
We define a surface $\overline{\Sigma}$ by gluing $n$ copies of $\Sigma_{1,1}$ along the boundary components $\pa_j\Sigma$,  $1 \leq j \leq n$, except $\pa_0\Sigma$
$$
\overline{\Sigma} := \Sigma \cup_{\pa\Sigma\setminus \pa_0\Sigma}\left({\bigcup}^n_{j=1}\Sigma_{1,1}\right).
$$
Then $\overline{\Sigma}$ is a compact surface of genus $g+n$ with one boundary component $\pa_0\Sigma$. The fundamental group $\overline{\pi} := \pi_1(\overline{\Sigma}, \ast)$ is a free group of rank $2(g+n)$. We denote the inclusion homomorphism by $\imath: \pi \to \overline{\pi}$. 
The lower central series $\{\Gamma_k\overline{\pi}\}_{k\geq 1}$ of the group $\overline{\pi}$ is defined by $\Gamma_1\overline{\pi} := \overline{\pi}$, and $\Gamma_{k+1}\overline{\pi} := [\overline{\pi}, \Gamma_{k}\overline{\pi}]$ for $k \geq 1$. 
The weight filtration $\{\Gwt_k\pi\}_{k\geq 1}$ is defined to be the pull-back of $\{\Gamma_k\overline{\pi}\}_{k\geq 1}$ by the inclusion homomorphism $\imath$
$$
\Gwt_k\pi := \imath\inv(\Gamma_k\overline{\pi}) \subset \pi, \quad k \geq 1,
$$
which is a central filtration. \par
\begin{thm}\label{thm:wtG} The lifting $\La: \pi \to H_1(U\Sigma; \bZ)$ descends to a map $\pi/\Gwt_3\pi \to H_1(U\Sigma; \bZ)$. 
\end{thm}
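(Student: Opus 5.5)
The plan is to show two things: $\La(\gamma\delta)=\La(\gamma)$ for every $\gamma\in\pi$ and $\delta\in\Gwt_3\pi$, and $\La(\delta)=0$ for every $\delta\in\Gwt_3\pi$.

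\emph{Reduction to vanishing.} By Lemma \ref{lem:product},
$$
\La(\gamma\delta)=\La(\gamma)+\La(\delta)+(\gamma\cdot\delta)z .
$$
Every element of $\Gwt_2\pi=\imath\inv(\Gamma_2\overline{\pi})$ has homology class in the span of the boundary classes $[\pa_j\Sigma]$. Indeed, $H_1(\Sigma;\bZ)\to H_1(\overline{\Sigma};\bZ)$ has kernel spanned by these classes. Boundary classes have zero intersection with everything, so $\gamma\cdot\delta=0$ for $\delta\in\Gwt_2\pi$. Hence it suffices to prove $\La(\delta)=0$ for all $\delta\in\Gwt_3\pi$. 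Also, Lemma \ref{lem:commutator} with $\gamma_1=\gamma_2=1$ gives $\La(1)=0$.

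\emph{$\La$ on $\Gwt_2\pi$ and on commutators.} By the same intersection argument, Lemma \ref{lem:product} shows that $\La$ restricted to the subgroup $\Gwt_2\pi$ is a group homomorphism. Moreover, for $\gamma\in\pi$ and $\delta\in\Gwt_2\pi$, Lemma \ref{lem:commutator} gives
$$
\La([\gamma,\delta])=2(\gamma\cdot\delta)z=0 .
$$
Consequently $\La$ vanishes on the subgroup generated by all commutators $[\gamma,\delta]$ with $\gamma\in\pi$ and $\delta\in\Gwt_2\pi$, that is, on $[\pi,\Gwt_2\pi]\subset\Gwt_2\pi$. This works because $\La$ is a homomorphism on $\Gwt_2\pi$ and $[\pi,\Gwt_2\pi]$ is generated by such commutators.

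\emph{Identifying $\Gwt_3\pi$.} It remains to prove $\Gwt_3\pi=[\pi,\Gwt_2\pi]$; I expect this group-theoretic step to be the main obstacle. The inclusion $\supset$ is clear, since the weight filtration is central. For $\subset$, choose free generators $\alpha_i,\beta_i$ ($1\le i\le g$) and boundary loops $\zeta_j$ ($1\le j\le n$) of $\pi$. Then $\Gwt_2\pi$ is the normal closure of $[\pi,\pi]$ and the $\zeta_j$. The quotient $\Gwt_2\pi/[\pi,\Gwt_2\pi]$ is central in $\pi/[\pi,\Gwt_2\pi]$ and is generated by the images of the $[x,y]$ for $x,y\in\{\alpha_i,\beta_i\}$ and of the $\zeta_j$. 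In $\Gamma_2\overline{\pi}/\Gamma_3\overline{\pi}=\La^2 H_1(\overline{\Sigma};\bZ)$ these generators map respectively to $x\wedge y$ and to the symplectic classes $\omega_j=a'_j\wedge b'_j$ of the glued tori. The family $\{x\wedge y\}\cup\{\omega_j\}$ is linearly independent in $\La^2H_1(\overline{\Sigma};\bZ)$. Therefore $\Gwt_2\pi/[\pi,\Gwt_2\pi]\to\Gamma_2\overline{\pi}/\Gamma_3\overline{\pi}$ is injective, which gives $\Gwt_3\pi=[\pi,\Gwt_2\pi]$; alternatively one can cite the description of the weight filtration in \cite[\S3]{AKKNb}. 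Combining the three steps proves the theorem.
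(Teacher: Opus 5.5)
Your proof is correct, and after the first step it takes a genuinely different route from the paper's. The first step is shared: Lemma \ref{lem:product}, together with $\gamma\cdot\delta=0$ for $\delta\in\Gwt_2\pi$, reduces the theorem to showing that $\La$ vanishes on $\Gwt_3\pi$.

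The paper never identifies $\Gwt_3\pi$ intrinsically. Instead it proceeds in three steps:
\begin{enumerate}
\item It applies Lemma \ref{lem:commutator} on the enlarged surface $\overline{\Sigma}$ to see that $\overline{\La}$ kills $\Gamma_3\overline{\pi}$.
\item Using $\overline{\La}\circ\imath=i_*\circ\La$ and Lemma \ref{lem:kernel}, it writes $\La(\gamma)=\sum_j\nu_j s_*[\pa_j\Sigma]$. Lemma \ref{lem:kernel} computes $\Ker(H_1(U\Sigma)\to H_1(U\overline{\Sigma}))$ via a section of $U\overline{\Sigma}$ and K\"unneth.
\item It shows $\nu_j=0$ by evaluating a Magnus expansion $\overline{\theta}_2(\imath(\gamma))$ modulo $\overline{H}_{(1)}^{\otimes 2}$.
\end{enumerate}

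You stay inside $\pi$. You prove $\Gwt_3\pi=[\pi,\Gwt_2\pi]$ by showing that $\Gwt_2\pi/[\pi,\Gwt_2\pi]\to\Gamma_2\overline{\pi}/\Gamma_3\overline{\pi}\cong\La^2H_1(\overline{\Sigma};\bZ)$ is injective. Then Lemma \ref{lem:commutator} on $\Sigma$ itself, plus the fact that $\La$ is a homomorphism on $\Gwt_2\pi$, finishes the proof.

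The injectivity argument is sound:
\begin{itemize}
\item The quotient $\pi/[\pi,\Gwt_2\pi]$ has class at most $2$, so commutators there are bilinear. This justifies reducing to $[x,y]$ with $x<y$ in $\{\alpha_i,\beta_i\}$, together with the boundary loops. It is worth stating this explicitly in your write-up.
\item The images of these generators are distinct members, up to sign, of the standard basis of $\La^2$ for the basis $\{\overline{A}_i,\overline{B}_i,a'_j,b'_j\}$ of $H_1(\overline{\Sigma};\bZ)$. Hence they are linearly independent.
\end{itemize}

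Your route buys brevity and a structural by-product: the third weight term is exactly $[\pi,\Gwt_2\pi]$, matching the description in which boundary loops have weight $2$. It needs no homology of $U\overline{\Sigma}$ and no expansions. The paper's final $\overline{\theta}_2$ step is in fact the same degree-two linear independence you use, so its detour through $U\overline{\Sigma}$ is not essential. What the paper's route offers in return is an explicit check that $\La$ is compatible with the embedding $\Sigma\subset\overline{\Sigma}$ that defines $\Gwt$.
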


In order to prove Theorem \ref{thm:wtG}, we need some computation of homology groups.
For a while, we omit the coefficients $\bZ$ of homology groups.
Since $\overline{\Sigma}$ has non-empty boundary $\pa_0\Sigma$, we have a section $s: \overline{\Sigma} \to U\overline{\Sigma}$ of the bundle $U\overline{\Sigma}$. 
\begin{lem}\label{lem:kernel} The kernel of the inclusion homomorphism $i_*: H_1(U\Sigma) \to H_1(U\overline{\Sigma})$ is generated by $\{s_*[\pa_j\Sigma]; \,\, 1 \leq j \leq n\}$. 
\end{lem}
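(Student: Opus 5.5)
Throughout we work with integer coefficients. The section $s$ trivializes both unit tangent bundles compatibly: $U\overline{\Sigma} \cong \overline{\Sigma}\times S^1$, and restricting gives $U\Sigma \cong \Sigma\times S^1$. Hence we have splittings
$$
H_1(U\Sigma) = s_*H_1(\Sigma)\oplus \bZ z, \qquad H_1(U\overline{\Sigma}) = s_*H_1(\overline{\Sigma})\oplus \bZ z,
$$
where $z$ is the fiber class. The inclusion $i$ commutes with the sections, since the section on $\Sigma$ is the restriction of $s$. It also maps the fiber class $z$ of $U\Sigma$ to the fiber class $z$ of $U\overline{\Sigma}$. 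Therefore $i_*$ is the direct sum of $s_*\circ\imath_*$ on the first summand and the identity on $\bZ z$. It follows that $\Ker i_* = s_*(\Ker(H_1(\Sigma)\to H_1(\overline{\Sigma})))$.

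The problem is thus reduced to showing that the kernel of $H_1(\Sigma)\to H_1(\overline{\Sigma})$ is generated by the boundary classes $[\pa_j\Sigma]$, $1\le j\le n$. We use the Mayer--Vietoris sequence for $\overline{\Sigma} = \Sigma\cup \bigcup_j \Sigma_{1,1}$ glued along the circles $\pa_j\Sigma$, $1\le j\le n$:
$$
\bigoplus_{j=1}^n H_1(\pa_j\Sigma) \to H_1(\Sigma)\oplus \bigoplus_{j=1}^n H_1(\Sigma_{1,1}) \to H_1(\overline{\Sigma}) \to \bigoplus_{j=1}^n \tilde H_0(\pa_j\Sigma)=0.
$$
Each $\pa_j\Sigma$ is the boundary of its copy of $\Sigma_{1,1}$, so it is null-homologous there. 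Hence the first map sends the generator $[\pa_j\Sigma]$ to $([\pa_j\Sigma],0)$, up to sign. By exactness, the kernel of $H_1(\Sigma)\oplus\bigoplus_j H_1(\Sigma_{1,1}) \to H_1(\overline{\Sigma})$ is spanned by these elements. Restricting to the summand $H_1(\Sigma)$, an element $(x,0)$ lies in the kernel if and only if $x$ is a combination of the $[\pa_j\Sigma]$. Consequently $\Ker i_*$ is generated by the classes $s_*[\pa_j\Sigma]$, as claimed.

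The one point needing care is the choice of section on $U\Sigma$. We must take it to be the restriction of the global section $s$ over $\overline{\Sigma}$, not an arbitrary section of $U\Sigma$. Otherwise the lifts of the $[\pa_j\Sigma]$ could differ by multiples of $z$, and those lifts would not lie in the kernel. With the restriction, $i\circ s|_{\Sigma}=s\circ(\text{inclusion})$ holds on the nose, so the splitting argument above applies. That $z$ maps to $z$ is immediate, because a fiber of $U\Sigma$ is also a fiber of $U\overline{\Sigma}$.
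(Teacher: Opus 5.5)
Your proof is correct, but it is organized differently from the paper's.

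The paper works in the total space throughout. It identifies $\Ker i_*$ with the image of the connecting homomorphism $H_2(U\overline{\Sigma}, U\Sigma)\to H_1(U\Sigma)$. It then uses excision to replace $H_2(U\overline{\Sigma}, U\Sigma)$ by $n$ copies of $H_2(U\Sigma_{1,1}, U\Sigma_{1,1}\vert_{\pa\Sigma_{1,1}})$, and computes on a single handle with the relative K\"unneth formula. There it uses the trivialization by $s$ only over each $\Sigma_{1,1}$. The summand $H_1(\Sigma_{1,1},\pa\Sigma_{1,1})\otimes H_1(S^1)$ comes from $H_2(U\Sigma_{1,1})$ and dies under $\pa_*$. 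The summand $H_2(\Sigma_{1,1},\pa\Sigma_{1,1})\otimes H_0(S^1)$ is sent to multiples of $s_*[\pa\Sigma_{1,1}]$.

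You instead use the global trivialization of $U\overline{\Sigma}$ to show that $i_*$ is $s_*\imath_*\oplus\mathrm{id}_{\bZ z}$. This removes the fiber direction entirely and reduces the lemma to a statement about the base surfaces, namely that the kernel of $H_1(\Sigma)\to H_1(\overline{\Sigma})$ is spanned by the $[\pa_j\Sigma]$, which you settle by Mayer--Vietoris.

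Your route is more elementary and exhibits directly that $\Ker i_* = s_*(\Ker \imath_*)$. It also makes explicit why $s$ must be the restriction of a section over $\overline{\Sigma}$, a point the paper uses only tacitly. The paper's route localizes the whole computation to the attached handles and never needs to analyze $H_1(\overline{\Sigma})$ globally.

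One small slip: with $A=\Sigma$ and $B=\bigsqcup_j\Sigma_{1,1}$, the term after $H_1(\overline{\Sigma})$ in the reduced Mayer--Vietoris sequence is $\widetilde{H}_0(\bigsqcup_j\pa_j\Sigma)\cong\bZ^{n-1}$, not $\bigoplus_j\widetilde{H}_0(\pa_j\Sigma)=0$. This is harmless, since you only use exactness at the middle term; alternatively, attach the handles one at a time.
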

\begin{proof}
By excision, we have $H_*(U\overline{\Sigma}, U\Sigma) \cong H_*(U\Sigma_{1,1}, U\Sigma_{1,1}\vert_{\pa\Sigma_{1,1}})^{\oplus n}$ through the inclusion $(U\Sigma_{1,1}, U\Sigma_{1,1}\vert_{\pa\Sigma_{1,1}})^{\amalg n} \hookrightarrow (U\overline{\Sigma}, U\Sigma)$. 
The section $s$ restricts a section on each of the bundles $U\Sigma_{1,1}$'s. 
Hence it suffices to show that the kernel of the inclusion homomorphism $H_1(U\Sigma_{1,1}\vert_{\pa\Sigma_{1,1}}) \to H_1(U\Sigma_{1,1})$ is generated by $s_*[\pa\Sigma_{1,1}]$. The section $s$ induces an isomorphism of bundles $U\Sigma_{1,1} \cong \Sigma_{1,1}\times S^1$. Together with the K\"unneth formula it implies an isomorphism 
$H_2(U\Sigma_{1,1}, U\Sigma_{1,1}\vert_{\pa\Sigma_{1,1}}) \cong (H_1(\Sigma_{1,1}, \pa\Sigma_{1,1})\otimes H_1(S^1)) \oplus (H_2(\Sigma_{1,1}, \pa\Sigma_{1,1})\otimes H_0(S^1))$. 
The first direct summand is hit by the inclusion homomorphism $j_*: H_2(U\Sigma_{1,1}) \to H_2(U\Sigma_{1,1}, U\Sigma_{1,1}\vert_{\pa\Sigma_{1,1}})$, 
since the section $s$ induces an isomorphism $H_2(U\Sigma_{1,1}) \cong H_1(\Sigma_{1,1})\otimes H_1(S^1)$, 
while the connecting homomorphism $\partial_*: H_2(U\Sigma_{1,1}, U\Sigma_{1,1}\vert_{\pa\Sigma_{1,1}}) \to H_1(U\Sigma_{1,1}\vert_{\pa\Sigma_{1,1}})$ maps the second direct summand to the submodule generated by $s_*[\pa\Sigma_{1,1}]$. This proves Lemma \ref{lem:kernel}. 
\end{proof}

\begin{proof}[Proof of Theorem \ref{thm:wtG}]  Since the algebraic intersection number is preserved under the inclusion homomorphism $\imath: \pi \to \overline{\pi}$, we have $\gamma_1\cdot \gamma_2 = 0$ for any $\gamma_1\in\pi$ and $\gamma_2 \in \Gwt_2\pi$, 
so that $\La(\gamma_1\gamma_2) = \La(\gamma_1) + \La(\gamma_2)$. 
Hence its suffices to show that $\La$ vanishes on $\Gwt_3\pi$. \par
Let $\overline{\La}: \overline{\pi} \to H_1(U\overline{\Sigma}; \bZ)$ be the map $\La$ for the surface $\overline{\Sigma}$. 
We have $\overline{\La}\circ\imath = i_*\circ \La: \pi \to H_1(U\overline{\Sigma}; \bZ)$. 
For any $\overline{\gamma}_1 \in \overline{\pi}$ and $\overline{\gamma}_2 \in \Gamma_2\overline{\pi}$ we have $\overline{\La}(\overline{\gamma}_1\overline{\gamma}_2{\overline{\gamma}_1}\inv{\overline{\gamma}_2}\inv) = 2\overline{\gamma}_1\cdot \overline{\gamma}_2 = 0$ from Lemma \ref{lem:commutator}. Hence $\overline{\La}$ vanishes on $\Gamma_3\overline{\pi}$ and so on $\Gwt_3\pi$. \par
Now we take free generating systems $\{\overline{\alpha}_i, \overline{\beta}_i\}^{g+n}_{i=1}$ and $\{\alpha_i, \beta_i\}^g_{i=1}\cup \{\delta_j\}^n_{j=1}$ of the groups $\overline{\pi}$ and $\pi$, respectively, such that $\imath(\alpha_i) = \overline{\alpha}_i$, $\imath(\beta_i) = \overline{\beta}_i$ for any $1 \leq i \leq g$, and that $\imath(\delta_j) = \overline{\alpha}_{g+j}\overline{\beta}_{g+j}{\overline{\alpha}_{g+j}}\inv{\overline{\beta}_{g+j}}\inv$ represents the boundary loop $\pa_j\Sigma$ for any $1 \leq j \leq n$.  
Let $\gamma \in \Gwt_3\pi$. Then, since $\overline{\La}(\gamma) = 0$, 
we have $\La(\gamma) = \sum^n_{j=1}\nu_js_*[\pa_j\Sigma]$ for some $\nu_j \in \bZ$
from Lemma \ref{lem:kernel}. Applying $\varpi: U\Sigma \to \Sigma$, we have 
$[\gamma] = \sum^n_{j=1}\nu_j[\pa_j\Sigma] \in H_1(\Sigma)$, which means we have 
$\gamma = {\delta_1}^{\nu_1}{\delta_2}^{\nu_2}\cdots {\delta_n}^{\nu_n}\gamma'$ for some $\gamma' \in \Gamma_2\pi$. 
Denote by $\overline{H} = H_1(\overline{\Sigma}; \bQ)$ and take an expansion $\overline{\theta} = \sum^\infty_{m=0}\overline{\theta}_m: \overline{\pi} \to \widehat{T}(\overline{H}) = \prod^\infty_{m=0}\overline{H}^{\otimes m}$, 
$\overline{\theta}_m: \overline{\pi} \to \overline{H}^{\otimes m}$, $m \geq 0$, of the free group $\overline{\pi}$. See, for example, \S\ref{sec:exp} and \cite{Kaw05}. 
Since $\overline{\theta}_2(\Gamma_3\overline{\pi}) = 0$, we have 
$\overline{\theta}_2(\imath(\gamma)) = 0 \in \overline{H}^{\otimes 2}$. 
$\overline{\theta}_2\vert_{\Gamma_2\overline{\pi}}: \Gamma_2\overline{\pi} \to \overline{H}^{\otimes 2}$ is a group homomorphism. 
If we denote by $\overline{H}_{(1)} \subset \overline{H}$ the submodule generated by $\{[\overline{\alpha}_i], [\overline{\beta}_i]\}^g_{i=1}$, then we have 
\begin{equation}\label{eq:iotag}
\overline{\theta}_2(\imath(\gamma)) \equiv \sum^n_{j=1}\nu_j ([\overline{\alpha}_{g+j}]\otimes [\overline{\beta}_{g+j}] - [\overline{\beta}_{g+j}]\otimes [\overline{\alpha}_{g+j}]) \pmod{{\overline{H}_{(1)}}^{\otimes 2}}.
\end{equation}
In fact, the value of $\overline{\theta}_2\circ\imath$ at any commutator in $\{\alpha_i, \beta_i\}^g_{i=1}$ is in ${\overline{H}_{(1)}}^{\otimes 2}$. 
See the proof of Lemma \ref{lem:thze}. 
The element $\gamma'$ may include $\delta_j$'s as their commutators with some element $\xi$ of $\pi$, and $\overline{\theta}(\imath(\delta_j\xi{\delta_j}\inv\xi\inv)) \equiv 1+([\overline{\alpha}_{g+j}]\otimes [\overline{\beta}_{g+j}] - [\overline{\beta}_{g+j}]\otimes [\overline{\alpha}_{g+j}])\otimes [\xi] - [\xi]\otimes ([\overline{\alpha}_{g+j}]\otimes [\overline{\beta}_{g+j}] - [\overline{\beta}_{g+j}]\otimes [\overline{\alpha}_{g+j}]) \pmod{\overline{H}^{\otimes 4}}$. This proves \eqref{eq:iotag}. 
Hence we have $\nu_1=\nu_2 = \cdots = \nu_n = 0$, and so $\overline{\La}(\gamma) = 0$ for any $\gamma \in \Gwt_3\pi$, as was to be shown.
\end{proof}
\par
\bigskip
In the case where $P$ consists only of a single point $\ast \in \pa\Sigma$, the lift $\La$ has a non-commutative analogue $\tilde\La: \pi \to \pi_1(U\Sigma, v_\ast)$. By abuse of notation, we denote by $z \in \pi_1(U\Sigma, v_\ast)$ the fiber loop in $U\Sigma$. 
If we define 
\begin{equation}\label{eq:nc}
\tilde\La(\gamma) := \overset\cdot\gamma\cdot z^{\sharp\Gamma^+_\gamma - \sharp\Gamma^-_\gamma + 1} \in \pi_1(U\Sigma, v_\ast)
\end{equation}
for any generic representative $\gamma$ of an element of $\pi$, then it induces a well-defined map $\tilde\La: \pi \to \pi_1(U\Sigma, v_\ast)$ by a similar argument to $\Lambda$. Here it should be remarked that $z$ is central in the group $\pi_1(U\Sigma, v_\ast)$. \par

%
%

\section{$\bK$-framings, based $\bK$-framings and $\bK$-quadratic forms}\label{sec:frame}

Let $\bK$ be a commutative ring with unit. The characteristic 
$\ch\,\bK$ is the non-negative generator of the kernel of the ring homomorphism $\bZ \to\bK$ induced by the unit of $\bK$.
Let $\varpi: U\Sigma \to \Sigma$ be the unit tangent bundle of the surface $\Sigma$, as in the previous sections. For any $C^\infty$ immersion $\alpha: S^1= \bR/\bZ\to \Sigma$, we define 
$$
\overrightarrow{\alpha} := \overset\cdot\alpha_*[S^1] \in H_1(U\Sigma; \bZ), 
$$
using the velocity vector 
$\overset\cdot\alpha: S^1 \to U\Sigma = T^\times\Sigma/\bR_{>0}$, $t \mapsto (d\alpha)\left(\frac{\pa}{\pa t}\right)_t \bmod \bR_{>0}$, of the immersion $\alpha$, and denote
\begin{equation}\label{eq:rotw}
\rot_w(\alpha) := \langle w, \overrightarrow{\alpha}\rangle \in \bK.
\end{equation}
for any $w \in H^1(U\Sigma; \bK)$. 
Since the unit tangent bundle $\varpi$ is an oriented $S^1$-bundle, we have the Gysin exact sequence 
\begin{equation}\label{eq:ext}
0 \to  H^1(\Sigma; \bK) \overset{\varpi^*}\to H^1(U\Sigma; \bK) 
\overset{\iota^*}\to H^1(S^1; \bK) \overset{e}\to H^2(\Sigma; \bK).
\end{equation}
Here $\iota: S^1\hookrightarrow U\Sigma$ is the inclusion map of a fiber, and the map $e$ is given by $e(v) = \langle v, [S^1]\rangle\, e(\Sigma)$ for $v \in H^1(S^1; \bK)$, where $e(\Sigma) \in H^2(\Sigma; \bK)$ is the Euler class of the manifold $M$. 
Here we recall that $H^1(X; \bK) = \mathrm{Hom}_\bZ(H_1(X; \bZ), \bK)$ for any topological space $X$. \par

In the sequence \eqref{eq:ext}, we define a set $F(\Sigma; \bK) := \{\xi \in H^1(U\Sigma; \bK); \,\, \langle \iota^*(\xi), [S^1]\rangle = 1\}$ and call an element of the set $F(\Sigma; \bK)$ a {\it $\bK$-framing} of the surface $\Sigma$. 
By taking the composite with the second projection $\Sigma\times S^1 \to S^1$, the homotopy class of a usual framing $U\Sigma \cong \Sigma\times S^1$ is identified with a $\bZ$-framing $H_1(U\Sigma; \bZ) \to H_1(S^1; \bZ) = \bZ$. It can be regarded also as the homotopy class of a nowhere vanishing vector field on $\Sigma$. Moreover an $r$-spin structure is a fiberwise $\bZ/r$-covering of $U\Sigma$, and its corresponding homomorphism  $H_1(U\Sigma; \bZ) \to \bZ/r$ is a $\bZ/r$-framing. If $r=2$, a $2$-spin structure is exactly a spin structure.\par

From the sequence \eqref{eq:ext}, the set $F(\Sigma; \bK)$ is nonempty if and only if $e(\Sigma)=0\in H^2(\Sigma;\bK)$. 
It always holds if $n \geq 0$, i.e., the boundary $\pa\Sigma$ is nonempty. On the other hand, if $\Sigma$ is closed, i.e., $n=-1$, then it is equivalent to that $\ch\bK$ divides $\chi(\Sigma) = 2-2g$.\par

For the rest of this paper, we assume the set $F(\Sigma; \bK)$ is non-empty. The set $F(\Sigma; \bK)$ is an $H^1(\Sigma; \bK)$-torsor from the sequence \eqref{eq:ext}. So, for any $\xi, \xi' \in F(\Sigma; \bK)$ and  $u \in H^1(\Sigma; \bK)$, 
we simply write $\xi' = \xi + u$ or $\xi' - \xi = u$ for $\xi' = \xi + \varpi^*u$. \par

If $w = \xi \in F(\Sigma; \bK)$ is a $\bK$-framing, then we call the map $\rot_\xi$ in \eqref{eq:rotw} the {\it $\xi$-rotation number}. If $\xi' = \xi + u$, we have
\begin{equation}\label{eq:xixi'}
\rot_{\xi'}(\alpha) = \rot_\xi(\alpha) + \langle u, \alpha_*[S^1]\rangle.
\end{equation}
\begin{lem}[Poincar\'e-Hopf]\label{lem:PH}
We have 
\begin{equation}\label{eq:PH}
{\sum}^n_{j=0}\rot_\xi(\pa_j\Sigma) = \chi(\Sigma) \in \bK
\end{equation}
for any $\bK$-framing $\xi \in F(\Sigma; \bK)$. 
Here we endow each $\pa_j\Sigma$ with the orientation induced by that of the surface $\Sigma$.
\end{lem}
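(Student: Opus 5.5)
Since $\sum_j \rot_\xi(\pa_j\Sigma)=\langle \xi, \sum_j\overrightarrow{\pa_j\Sigma}\rangle$, it suffices to prove the integral identity
$$
{\sum}^n_{j=0}\overrightarrow{\pa_j\Sigma} = \chi(\Sigma)\, z \in H_1(U\Sigma; \bZ),
$$
after which pairing with $\xi$ and using $\langle \xi, z\rangle = \langle\iota^*\xi, [S^1]\rangle = 1$ gives \eqref{eq:PH}. If $\Sigma$ is closed the left-hand side is empty, so we must check $\chi(\Sigma)\xi(z)=0$. This holds because $F(\Sigma;\bK)\neq\emptyset$ forces $e(\Sigma)=0$ in $H^2(\Sigma;\bK)$, i.e.\ $\chi(\Sigma)=0$ in $\bK$; equivalently, $\chi(\Sigma)z=0$ in $H_1(U\Sigma;\bZ)$ by the Gysin sequence. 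So assume $\pa\Sigma\neq\emptyset$.

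The plan is to use a generic vector field. Choose a smooth vector field $X$ on $\Sigma$ which equals the positively oriented unit tangent vector along each $\pa_j\Sigma$ and has finitely many isolated zeros $p_1,\dots,p_m$ in the interior. The Poincar\'e--Hopf theorem for a vector field tangent to the boundary gives $\sum_k \mathrm{ind}_{p_k}X = \chi(\Sigma)$. One can double $\Sigma$ to reduce to the closed case, or cap each boundary circle by a disk: a tangent field on $\pa D^2$ extends with one zero of index $+1$, and $\chi$ of the capped surface equals $\chi(\Sigma)+(n+1)$.

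Let $D_k$ be small disjoint disks around the $p_k$ and put $\Sigma' := \Sigma\setminus\bigcup_k D_k^\circ$. On $\Sigma'$ the normalized field gives a section $s=X/|X|$ of $U\Sigma'$, so the $2$-chain $s_*[\Sigma',\pa\Sigma']$ has boundary $\sum_j s_*[\pa_j\Sigma] - \sum_k s_*[\pa D_k]$ in $H_1(U\Sigma)$. The sign comes from the orientation of $\pa D_k$ induced as a boundary of $D_k$ being opposite to that induced by $\Sigma'$. Along $\pa_j\Sigma$ the section $s$ is exactly the velocity vector, so $s_*[\pa_j\Sigma]=\overrightarrow{\pa_j\Sigma}$.

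The main step is the local computation $s_*[\pa D_k] = (\mathrm{ind}_{p_k}X)\, z$ in $H_1(U\Sigma)$. Over the disk $D_k$ the bundle trivializes as $U D_k\cong D_k\times S^1$. The loop $s|_{\pa D_k}$ is then homologous in $D_k\times S^1$ to $\{\mathrm{pt}\}\times(\text{a loop of degree } \mathrm{ind}_{p_k}X)$, by the definition of the index as the degree of $X/|X|$ on $\pa D_k$ in a trivialization. Summing over $k$ gives
$$
\sum_j\overrightarrow{\pa_j\Sigma}=\Bigl(\sum_k\mathrm{ind}_{p_k}X\Bigr)z=\chi(\Sigma)z,
$$
which completes the argument. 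I expect the only delicate points to be the orientation conventions: the boundary orientation of $\pa D_k$ inside $\Sigma'$ versus as $\pa D_k$, and the sign convention for $z$ relative to the orientation of $\Sigma$. Both can be checked on the model case of the disk $\Sigma_{0,1}$, where the positively oriented boundary circle visibly rotates once positively, giving $\overrightarrow{\pa D}=z=\chi(D)z$.
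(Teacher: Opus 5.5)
Your proof is correct, but it takes a different route from the paper's.

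\textbf{The paper's route.} The closed case is handled exactly as you do. For $\pa\Sigma\neq\emptyset$, the paper picks one $\bK$-framing that is the reduction of an honest $\bZ$-framing (a nowhere vanishing vector field) and quotes the classical Poincar\'e--Hopf formula for that framing. It then transports the identity to every $\xi'=\xi+u\in F(\Sigma;\bK)$ using $\rot_{\xi'}(\alpha)=\rot_\xi(\alpha)+\langle u,\alpha_*[S^1]\rangle$ together with $\sum_j[\pa_j\Sigma]=0$ in $H_1(\Sigma;\bZ)$.

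\textbf{Your route.} You prove the framing-free identity $\sum_j\overrightarrow{\pa_j\Sigma}=\chi(\Sigma)z$ in $H_1(U\Sigma;\bZ)$, using a boundary-tangent field with isolated zeros. The lemma then follows for every $\xi$ and every $\bK$ at once by pairing with $\xi$. The integral lift and the torsor step disappear: the relation $\sum_j[\pa_j\Sigma]=0$ is just $\varpi_*$ of your identity, since $\varpi_*z=0$.

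\textbf{What each route costs and buys.} In effect you are proving the classical lemma the paper cites. The price is the index theorem for fields tangent to the boundary and the orientation bookkeeping, both of which you handle correctly. Your disk check agrees with the paper's convention, which is visible in its remark that a monogon contributes $-\varepsilon_p z$ to $\overrightarrow{\gamma}$. The paper's version is shorter because it outsources exactly this step.

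\textbf{A wording point.} For a closed surface, $\chi(\Sigma)z=0$ in $H_1(U\Sigma;\bZ)$ holds unconditionally; your vector-field argument with empty boundary already gives it. So it is not ``equivalent'' to $\chi(\Sigma)=0$ in $\bK$. Rather, it implies the latter once $\langle\xi,z\rangle=1$, which makes your treatment of the closed case uniform with the bounded one.

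\textbf{A third route.} The paper also gives a purely combinatorial proof at the end of \S\ref{sec:frame}. It evaluates the quadratic form $q_\xi$ on $\zeta_0=\delta_1\cdots\delta_n\prod_i\alpha_i\beta_i\alpha_i^{-1}\beta_i^{-1}$ via the product formula, which avoids index theory altogether.
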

\begin{proof}
If $\Sigma$ is closed, then $\chi(\Sigma) = 0 \in \bK$ from the assumption $F(\Sigma; \bK) \neq \emptyset$. Hence the formula is trivial. 
If $\pa\Sigma \neq \emptyset$, there is a $\bK$-framing $\xi \in F(\Sigma; \bK)$ which lifts to an element of $\tilde\xi \in F(\Sigma; \bZ)$.
The formula for $\tilde\xi$ is exactly the classical Poincar\'e-Hopf theorem.
See, for example, \cite[Lemma 3.3]{Kaw18} for the proof. 
This implies the formula for $\xi$. Since $\sum^n_{j=0}[\pa_j\Sigma] =0 \in H_1(\Sigma; \bZ)$, we obtain the formula \eqref{eq:PH} for any $\xi' \in F(\Sigma; \bK)$ from \eqref{eq:xixi'}.
\end{proof}
At the end of this section, we will give an alternative proof of the Poincar\'e-Hopf formula \eqref{eq:PH} by using the quadratic form $q_\xi$ introduced in \eqref{eq:qxi}. 
\par
\medskip
Next we introduce the notion of a based $\bK$-framing.
Suppose $\pa\Sigma \neq \emptyset$. 
Let $P \subset \pa\Sigma$ and $\tilde P \subset U\Sigma\vert_{\pa\Sigma}$ be as in the previous section. 
Applying the snake lemma to the commutative diagram 
$$
\begin{CD}
0 @>>> \widetilde{H}^0(\tilde P; \bK)@>>> H^1(\Sigma, P; \bK) @>>> H^1(\Sigma; \bK)  @>>> 0\\
@. @| @V{\varpi^*}VV  @V{\varpi^*}VV @.\\
0 @>>> \widetilde{H}^0(\tilde P; \bK)  @>>> H^1(U\Sigma, \tilde P; \bK) @>{j^*}>> H^1(U\Sigma; \bK) @>>> 0, 
\end{CD}
$$
we obtain  
\begin{equation}\label{eq:extP}
0 \to H^1(\Sigma, P; \bK)\overset{\varpi^*}\to H^1(U\Sigma, \tilde P; \bK) \overset{\iota^*}\to H^1(S^1; \bK)  \to 0 \quad \text{(exact)}.
\end{equation}
Here we remark that the surjectivity of $\iota^*$ follows from that of $j^*$. 
In the sequence \eqref{eq:extP}, we define a set $F(\Sigma, P; \bK) := \{\xi \in H^1(U\Sigma, \tilde P; \bK); \,\, \langle \iota^*(\xi), [S^1]\rangle = 1\}$, and 
call an element of the set $F(\Sigma, P; \bK)$ a {\it based $\bK$-framing} of the pair $(\Sigma, P)$. The set $F(\Sigma, P; \bK)$ is non-empty and an $H^1(\Sigma, P; \bK)$-torsor. For a generic representative $\gamma: (I, \pa I) \to (\Sigma, P)$ for an element of $\Pi\Sigma\vert_P$, we have the $\xi$-rotation number $\rot_\xi(\gamma)$ by
$$
\rot_\xi(\gamma) := \langle \xi, \overrightarrow{\gamma}\rangle \in \bK.
$$
While $\rot_\xi(\gamma)$ depends on the representative $\gamma$, 
the quantity
\begin{equation}\label{eq:qxi}
q_\xi(\gamma) := (\xi\circ\La)(\gamma) = \langle \xi, \La(\gamma)\rangle = \rot_\xi(\gamma) + \sharp\Gamma^+_\gamma - \sharp\Gamma^-_\gamma + 1 \in \bK
\end{equation}
is independent from the choice of a generic representative of the morphism $\gamma$ in $\Pi\Sigma\vert_P$. 
The right hand side of \eqref{eq:qxi} can be seen as the writhe of $\gamma$ with respect to the $\bK$-framing $\xi$. 
\par
The map $q_\xi: \pi = \Pi\Sigma(\ast, \ast) \to \bK$ is closely related to Turaev's self-intersection based loop operation \cite{Tu1979}. In fact, we have 
\begin{equation}
q_\xi = (\mathrm{aug}\otimes\mathrm{aug})\circ \mu^\xi_r: \pi \to |\bK\pi|\otimes \bK\pi \to \bK,
\end{equation}
where $\mu^\xi_r$ is a framed analogue \cite[\S2.3]{AKKNb} of Turaev's operation, 
$|\bK\pi|$ is the trace space of the group algebra $\bK\pi$, 
and $\mathrm{aug}: \bK\pi \to \bK$ and $\mathrm{aug}: |\bK\pi| \to \bK$ are the augmentation maps. 
Here the initial and terminal vectors $\overset\cdot{\gamma}(\epsilon) \in T_\ast\Sigma$, $\epsilon = 0,1$, of the immersion $\gamma$ are perpendicular to the boundary $\pa\Sigma$, so that we need to add $\frac12$ to the rotation number in \cite{AKKNb} to get that in this paper. 
The map $q_\xi$ on the whole groupoid $\Pi\Sigma\vert_P$ also should be related to a groupoid version of $\mu^\xi_r$ intruduced by Taniguchi \cite{Tan25b}. \par
The product formula for $\La$ in Lemma \ref{lem:product} implies
\begin{equation}\label{eq:qprod}
q_\xi(\gamma_1\gamma_2) = q_\xi(\gamma_1) + q_\xi(\gamma_2) + \gamma_1\cdot\gamma_2 \in \bK
\end{equation}
for any composable morphisms $\gamma_1$, $\gamma_2$ in $\Pi\Sigma\vert_P$. 
Following Johnson \cite{J80a}, we introduce the notion of a $\bK$-quadratic form on $(\Sigma, P)$. 
Namely, a map $q: \Mor(\Pi\Sigma\vert_P) \to \bK$ from the set $\Mor(\Pi\Sigma\vert_P)$ of all morphisms in the groupoid $\Pi\Sigma\vert_P$ is a {\it $\bK$-quadratic form} on $(\Sigma, P)$ if 
$$
q(\gamma_1\gamma_2) = q(\gamma_1) + q(\gamma_2) + \gamma_1\cdot \gamma_2 \in \bK
$$
for any composable $\gamma_1, \gamma_2 \in \Mor(\Pi\Sigma\vert_P)$. 
We denote by $Q(\Sigma, P; \bK)$ the set of all $\bK$-quadratic form on $(\Sigma, P)$. 
It is non-empty, since $q_\xi \in Q(\Sigma, P; \bK)$ for any $\xi \in F(\Sigma, P; \bK)$. 
For any $u \in H^1(\Sigma, P; \bK)$, the map $q+u: \Mor(\Pi\Sigma\vert_P) \to \bK$, 
$\gamma \mapsto q+u(\gamma) := q(\gamma) + \langle u,  \gamma_*[I, \pa I]\rangle \in \bK$ is also a $\bK$-quadratic form on $(\Sigma, P)$. 
Here $[I, \pa I] \in H_1(I, \pa I; \bZ)$ is the fundamental class of the unit interval $I = [0,1]$ as before. 
\begin{prop}\label{prop:FQ} The set $Q(\Sigma, P; \bK)$ is an $H^1(\Sigma, P; \bK)$-torsor, and the map 
$$
\mathfrak{q}: F(\Sigma, P; \bK) \to Q(\Sigma, P; \bK), \quad \xi \mapsto \mathfrak{q}(\xi) := q_\xi,
$$
is an isomorphism of $H^1(\Sigma, P; \bK)$-torsors.
\end{prop}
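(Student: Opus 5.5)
The proof splits into three steps: show that $Q(\Sigma, P; \bK)$ is a torsor, show that $\mathfrak{q}$ is equivariant, and then conclude formally, since an equivariant map between torsors over the same group is automatically bijective.

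\emph{Step 1: the torsor structure.} The action $q \mapsto q+u$ was already observed to preserve $Q(\Sigma, P; \bK)$. It is an action because $\langle u, \gamma_*[I,\pa I]\rangle$ is additive in $u$. It remains to show that for $q, q' \in Q(\Sigma, P; \bK)$ there is a unique $u$ with $q' = q+u$.
\begin{itemize}
\item The difference $d := q' - q$ satisfies $d(\gamma_1\gamma_2) = d(\gamma_1) + d(\gamma_2)$, because the intersection terms cancel. So $d$ is a functor from $\Pi\Sigma\vert_P$ to $\bK$, viewed as a one-object groupoid.
\item Such a functor factors through $H_1(\Sigma, P; \bZ)$. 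To see this, fix a maximal tree: choose arcs $\eta_\bullet \in \Pi\Sigma(\ast, \bullet)$. Then $\Pi\Sigma\vert_P$ is generated by $\pi$ together with the $\eta_\bullet$'s, freely in the groupoid sense. Homomorphisms to the abelian group $\bK$ are determined by a homomorphism $\pi \to \bK$ and arbitrary values $d(\eta_\bullet)$.
\item This is exactly the data of an element of $\Hom(H_1(\Sigma, P;\bZ), \bK) = H^1(\Sigma, P; \bK)$. The identification uses the exact sequence $0 \to H_1(\Sigma) \to H_1(\Sigma, P) \to \widetilde{H}_0(P) \to 0$, which splits via the classes of the $\eta_\bullet$.
\end{itemize}
Hence $d = u$ for a unique $u$, namely the one with $d(\gamma) = \langle u, \gamma_*[I,\pa I]\rangle$. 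Uniqueness holds because the classes $\gamma_*[I,\pa I]$ generate $H_1(\Sigma, P; \bZ)$.

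\emph{Step 2: equivariance.} We need $q_{\xi+u} = q_\xi + u$ for $u \in H^1(\Sigma,P;\bK)$. By \eqref{eq:qxi},
\[
q_{\xi+\varpi^*u}(\gamma) - q_\xi(\gamma) = \langle \varpi^*u, \La(\gamma)\rangle = \langle u, \varpi_*\La(\gamma)\rangle .
\]
From \eqref{eq:tildeL}, $\varpi_*\La(\gamma) = \varpi_*\overrightarrow{\gamma} = \gamma_*[I,\pa I]$, because $\varpi_* z = 0$ and $\varpi\circ\overset\cdot\gamma = \gamma$.

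\emph{Step 3: conclusion.} Both sets are $H^1(\Sigma, P;\bK)$-torsors: $F(\Sigma, P; \bK)$ by the exact sequence \eqref{eq:extP}, and $Q(\Sigma, P; \bK)$ by Step 1. An equivariant map between torsors over the same group is a bijection. Explicitly, fix $\xi_0$; then $\mathfrak{q}(\xi_0+u) = q_{\xi_0}+u$, and $u \mapsto q_{\xi_0}+u$ is bijective by Step 1.

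The main obstacle is Step 1: identifying additive functions on $\Mor(\Pi\Sigma\vert_P)$ with $H^1(\Sigma, P; \bK)$, including the case where several points of $P$ lie on the same boundary component. The tree-of-arcs argument handles this uniformly, since it never uses the position of the points of $P$ on the boundary.
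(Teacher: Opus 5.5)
Your proposal is correct and follows the paper's proof: the difference of two quadratic forms is a functor $\Pi\Sigma\vert_P \to \bK$, such a functor factors uniquely through $\gamma \mapsto \gamma_*[I,\pa I] \in H_1(\Sigma,P;\bZ)$, and $q_{\xi+u} = q_\xi + u$. The only difference is that you prove the factorization explicitly, via a maximal tree $\{\eta_\bullet\}$ and the split sequence $0 \to H_1(\Sigma) \to H_1(\Sigma,P) \to \widetilde{H}_0(P) \to 0$, where the paper only points to the Hurewicz-type argument in Hatcher; you also spell out $\varpi_* z = 0$ and $\varpi\circ\overset\cdot\gamma = \gamma$ for the equivariance, which the paper takes as immediate from the definitions.
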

\begin{proof} In this proof, we regard any abelian group as a groupoid with a single object. For any $q, q' \in Q(\Sigma, P; \bK)$, the difference $q'-q: \Mor(\Pi\Sigma\vert_P) \to \bK$ is a functor. \par
The evaluation map at the fundamental class 
$[I, \pa I]$ defines the abelianization functor $\mathrm{ab}: \Pi\Sigma\vert_P \to H_1(\Sigma, P; \bZ)$. For any functor $F: \Pi\Sigma\vert_P \to A$ to any abelian group $A$, we have a unique group homomorphism $\bar F: H_1(\Sigma, P; \bZ) \to A$ such that $F = \bar F\circ\mathrm{ab}: \Pi\Sigma\vert_P \to A$. We can prove it in a similar way to the fact that the abelianization of the fundamental group of a path-connected space is canonically isomorphic to the integral first homology group. 
See, for example, \cite[pp.166-167]{Hat02}. 
Hence $q'-q$ induces a group homomorphism $H_1(\Sigma, P; \bZ) \to \bK$, or equivalently an element of $H^1(\Sigma, P; \bK)$. 
Therefore $Q(\Sigma, P; \bK)$ is an $H^1(\Sigma, P; \bK) = \Hom_{\bZ}(H_1(\Sigma, P; \bZ), \bK)$-torsor. From the definitions of $q+u$ and $\xi+u$, we have $q_{\xi+u} = q_\xi + u$. This proves the proposition.
\end{proof}
It is clear that the map $\mathfrak{q}$ is the mapping class group equivariant.
The proposition enables us to regard any element of $Q(\Sigma, P; \bK)$ as a based $\bK$-framing of the pair $(\Sigma, P)$. 
\par
\bigskip
Now we consider the case $P = \{\ast\}$. 
Then we have the extension \eqref{eq:extP} equals the extension \eqref{eq:ext}, 
and so $F(\Sigma, P; \bK) = F(\Sigma; \bK)$. Proposition \ref{prop:FQ} says that 
the map 
$$
\mathfrak{q}: F(\Sigma; \bK) \to Q(\Sigma, \{\ast\}; \bK), \quad \xi \mapsto \mathfrak{q}(\xi) = q_\xi,
$$
is an isomorphism of $H^1(\Sigma; \bK)$-torsors. From Theorem \ref{thm:wtG}, we may identify
$$
Q(\Sigma, \{\ast\}; \bK) = \{q: \pi/\Gwt_3\pi \to \bK; \,\, \forall \gamma_1, \forall \gamma_2 \in \pi, \,\, q(\gamma_1\gamma_2) = q(\gamma_1) + q(\gamma_2) + \gamma_1\cdot\gamma_2\}.
$$
By the same computation as Lemma \ref{lem:commutator} we have
\begin{equation}\label{eq:commuator}
q(\gamma_1\gamma_2{\gamma_1}\inv{\gamma_2}\inv) = 2\gamma_1\cdot\gamma_2 
\in \bK.
\end{equation}
In particular if $\bK = \bZ/2$, we may identify $Q(\Sigma, \{\ast\}; \bZ/2)$ with the set 
$$
Q_2(\Sigma) := \{\bar q: H_1(\Sigma; \bZ/2) \to \bZ/2; \,\, \forall x_1, \forall x_2 \in \pi, \,\, \bar q(x_1 + x_2) = \bar q(x_1) + \bar q(x_2) + x_1\cdot x_2\}.
$$
Proposition \ref{prop:FQ} implies the isomorphism 
\begin{equation}\label{eq:spinS}
\mathfrak{q}: F(\Sigma; \bZ /2) \overset\cong\to Q_2(\Sigma), \quad \xi \mapsto q_\xi,
\end{equation} 
of $H^1(\Sigma; \bZ/2)$-torsors. \par
If $\Sigma = \Sigma_{g,1}$, then the inclusion $i: \Sigma = \Sigma_{g,1} \hookrightarrow \Sigma_{g,0}$ induces an isomorphism $H_1(\Sigma_{g,1}; \bZ/2) \cong H_1(\Sigma_{g,0}; \bZ/2)$. 
Hence we have $Q_2(\Sigma_{g,1}) = Q_2(\Sigma_{g,0})$. The latter is the set of theta characteristics of genus $g$ in the classical context. 
Moreover we have $H_*(U\Sigma_{g,0}, U\Sigma_{g,1}; \bZ) \underset{\text{exc}}\cong H_*(D^2\times S^1, S^1\times S^1; \bZ) \cong H_*(D^2, S^1; \bZ) \otimes H_*(S^1; \bZ) \cong H_{*-2}(S^1; \bZ)$. 
Hence $H_1(U\Sigma_{g,1}; \bZ) \cong H_1(U\Sigma_{g,0}; \bZ)$ and so 
$F(\Sigma_{g,1}; \bZ/2) \cong F(\Sigma_{g}; \bZ/2)$. Consequently we obtain an isomorphism 
\begin{equation}\label{eq:Johnson}
\mathfrak{q}: F(\Sigma_{g,0}; \bZ/2) \overset\cong\to  Q_2(\Sigma_{g,0}), \quad \xi \mapsto q_\xi,
\end{equation}
of $H^1(\Sigma_{g,0}; \bZ/2)$-torsors. 
The isomorphism \eqref{eq:Johnson} was established by Johnson \cite[Theorem 3A, p.371]{J80a}. \par

Morita \cite[\S6, p.803]{Mo89} constructed an explicit element $d \in Q(\Sigma_{g,1}, \{\ast\}; \bZ)$ based on an explicit free symplectic generator system $\{\alpha_i, \beta_i\}^g_{i=1}$ of the fundamental group $\pi = \pi(\Sigma_{g,1})$ depicted in \cite[Figure 3, p.801]{Mo89}. 
He uses it to construct an explicit cocycle representing the Earle class $k_{\bZ} \in H^1(\Gamma_{g,1}; H^1(\Sigma_{g,1}; \bZ))$ which will be explained in \S\ref{sec:k}. 
We can take generic representatives for $\alpha_i, \beta_i$ in our convention in \S\ref{sec:lift} such that $\alpha_i$ is simple and $\beta_i$ has one negative self-intersection point. 
Then, if $\xi_d \in F(\Sigma_{g,1}; \bZ)$ is the $\bZ$-framing with $q_{\xi_d} = d$, these representatives satisfy $\rot_{\xi_d}(\alpha_i) = d(\alpha_i) - 1 = -1$ and $\rot_{\xi_d}(\beta_i) = d(\beta_i) - 2 = -2$. \par

We conclude this section by an alternative proof of the Poincar\'e-Hopf formula \eqref{eq:PH}. Let $\xi \in F(\Sigma; \bK)$ be a $\bK$-framing on $\Sigma = \Sigma_{g,n+1}$. Similar to the proof of Lemma \ref{lem:PH}, we may assume $n \geq 0$. Recall our basepoint $\ast$ is located on the $0$-th boundary component $\pa_0\Sigma$.
For each element in $\pi = \pi_1(\Sigma, *)$, we take a representative $\gamma: I = [0,1] \to \Sigma$ with negative initial vectors $\overset\cdot\gamma(0) = \overset\cdot\gamma(1) \in T_*\pa_0\Sigma$. 
Now we have a standard free generator system $\{\alpha_i, \beta_i\}^g_{i=1}\cup \{\delta_j\}^n_{j=1}$ of the fundamental group $\pi$ such that 
$$
\zeta_0 := \delta_1\delta_2\cdots \delta_n\prod^g_{i=1}\alpha_i\beta_i{\alpha_i}\inv {\beta_i}\inv \in\pi
$$
is represented by the negative boundary loop $(\pa_0\Sigma)\inv$, and $\delta_j$ is represented by an embedded loop which is freely regular homotopic to the boundary loop $\pa_j\Sigma$ for each $j \neq 0$. 
Since $(\pa_0\Sigma)\inv$ is an embedded loop, we have $q_\xi(\zeta_0) = \rot_\xi((\pa_0\Sigma)\inv) + 1 = -\rot_\xi(\pa_0\Sigma) + 1$. 
For each $j \neq 0$, we have $q_\xi(\delta_j) = \rot_\xi(\pa_j\Sigma) + 1$. 
Hence, from the product formula \eqref{eq:qprod}, we have 
$$
-\rot_\xi(\pa_0\Sigma) + 1 = q_\xi(\zeta_0) 
= 2g + n + \sum^n_{j=1}\rot_\xi(\pa_j\Sigma),
$$
which means the Poincar\'e-Hopf formula \eqref{eq:PH}: $\sum^n_{j=0}\rot_\xi(\pa_j\Sigma) = 1- 2g - n = \chi(\Sigma_{g,n+1})$.
\par

\section{The $\bK$-Earle class $k_\bK$}\label{sec:k}

Let $\Gamma = \Gamma_{g,n+1}$ be the mapping class group of the surface $\Sigma = \Sigma_{g,n+1}$ fixing the boundary {\it pointwise}. The group $\Gamma$ acts on the (co)homology groups of $\Sigma$, $(\Sigma, P)$, $(\Sigma, \pa\Sigma)$, 
$U\Sigma$, $(U\Sigma, \tilde P)$ and so on in an obvious way.
In particular, we have $\varphi\cdot\xi = \xi\circ (d\varphi\inv)_* \in H^1(U\Sigma; \bK)$ for any $\xi \in F(\Sigma; \bK)$ and $\varphi \in \Gamma$. 
We remark that the Poincar\'e duality is $\Gamma$-equivariant. \par
Also in this section we suppose $F(\Sigma; \bK) \neq \emptyset$. 
Then the sequence \eqref{eq:ext} 
$$
0 \to H^1(\Sigma; \bK) \to H^1(U\Sigma; \bK) \to \bK \to 0
$$
is an extension of $\bK\Gamma$-modules. 
Furuta's interpretation \cite[pp.569-]{Mo97} defines a cohomology class $k_\bK \in H^1(\Gamma; H^1(\Sigma; \bK))$ by the extension class of the extension. In other words, $k_\bK$ is the cohomology class of the cocycle $k_\xi$ defined by 
\begin{equation}\label{kxi}
k_\xi(\varphi) := \varphi\cdot\xi - \xi \in H^1(\Sigma; \bK) 
\end{equation}
for $\xi \in F(\Sigma; \bK)$ and $\varphi \in \Gamma$.
Here we have $q_{\xi}(\varphi\inv(\gamma))-q_\xi(\gamma) = (q_{\varphi\cdot\xi}-q_\xi)(\gamma) = (\varphi\cdot\xi - \xi) \circ \La(\gamma) = k_\xi(\varphi)(\La(\gamma)) = k_\xi(\varphi)[\gamma]$, i.e., 
\begin{equation}\label{eq:kqq}
k_\xi(\varphi) = q_{\xi}\circ\varphi\inv-q_\xi: \pi \to \bK.
\end{equation}
\par
Morita \cite{Mo89} computed $H^1(\Gamma_{g,1}; H_\bZ) = \bZ$ for any $g \geq 2$, where the element $d \in Q(\Sigma_{g,1}; \bZ)$ stated in \S\ref{sec:frame} induces a generator of the cohomology group. Moreover he proved that the induced homomorphism of the group homomorphism $\pi_1(U\Sgone) \to \Gamma_{g,1}$ defined by push-maps maps its generator to $\pm(2-2g)$ times the Poincar\'e duality isomorphism $H_1(\pi_1(U\Sigma_{g,1}); \bZ) = H_1(\Sigma_{g,1}; \bZ) \to H^1(\Sigma_{g,1}; \bZ)$. 
Prior to Morita's results \cite{Mo89}, Earle \cite{E78} gave a cocycle representing $k_\bZ$ in the context of the Abel-Jacobi map. 
Morita \cite[Proposition 4.1]{Mo97} proved that the extension class $k_\bZ$ generates the group $H^1(\Gamma_{g,1}; H_\bZ)$ by evaluating it on $\pi_1(U\Sgone)$. 
For any simple closed curve $C$ and any $\xi \in F(\Sigma_{g,1}; \bZ)$, 
the value of $k_\xi$ at the push-map along $C$ equals $2g-2$ times the Poincar\'e duality of the homology class $[C] \in H_1(\Sigma_{g,1}; \bZ)$. 
For the evaluation, see also \cite[Theorem 1.3]{KS1}, where we made a sign mistake in the formula for $\rot_\xi(T_C(\alpha)) - \rot_\xi(\alpha)$. 
On the other hand, Kuno \cite{Ku09} gave a combinatorial formula computing the cocycle defined by Earle \cite{E78}. Moreover Kuno, Penner and Turaev \cite{KPT} introduced an explicit cocycle representing a nonzero multiple of $k_\bZ$ on the Ptolemy groupoid. For the nonzero multiple, see \cite[Theorem 2.5]{Ku17}.
\par
For any compact connected oriented surface $\Sigma$, we have
\begin{lem}\label{lem:DT}
Let $C \subset \Sigma$ be a simple closed curve. Then the value of $k_\xi$ at the right-handed Dehn twist $T_C$ along $C$ is given by
$$
k_\xi(T_C) = (\rot_\xi C)(\cdot [C]) \in H^1(\Sigma; \bK),
$$
where $\cdot [C]\in H^1(\Sigma, \pa \Sigma; \bZ)$ means the algebraic intersection with the homology class $[C] \in H_1(\Sigma; \bZ)$, i.e., the Poincar\'e duality of the class $[C]$. 
\end{lem}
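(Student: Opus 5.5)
The plan is to compute $k_\xi(T_C) = T_C\cdot\xi - \xi$ by evaluating it on homology classes of immersed closed curves in $\Sigma$. Since $k_\xi(T_C) \in H^1(\Sigma;\bK) = \Hom_\bZ(H_1(\Sigma;\bZ),\bK)$, it suffices to evaluate on $[\alpha]$ for generic immersed loops $\alpha$. The classes $\overrightarrow{\alpha}$ of such loops, together with $z$, generate $H_1(U\Sigma;\bZ)$, and these loops can be chosen transverse to $C$. By definition,
\[
\langle k_\xi(T_C), [\alpha]\rangle = \langle \xi, (dT_C^{-1})_*\overrightarrow{\alpha}\rangle - \langle \xi, \overrightarrow{\alpha}\rangle = \rot_\xi(T_C^{-1}\alpha) - \rot_\xi(\alpha).
\]
This uses that $\varpi^*$ is injective and that $\langle\varpi^*u,\overrightarrow{\alpha}\rangle = \langle u,[\alpha]\rangle$. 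It is harmless to replace $T_C^{-1}$ by $T_C$ and track the sign at the end, since $k_\xi$ is a cocycle with $k_\xi(T_C^{-1}) = -T_C^{-1}k_\xi(T_C)$, and $T_C$ fixes the Poincar\'e dual of $[C]$.

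The key step is a local computation of $\rot_\xi(T_C\alpha) - \rot_\xi(\alpha)$. Take $\alpha$ transverse to $C$, meeting it at points $p_1,\dots,p_m$ with signs $\epsilon_1,\dots,\epsilon_m$, so that $\sum\epsilon_k = \alpha\cdot C$. Realize $T_C$ as supported in an annular neighbourhood $A$ of $C$. The immersed curve $T_C\alpha$ agrees with $\alpha$ outside $A$. Near each $p_k$, the arc of $\alpha$ crossing $A$ is replaced by an arc that runs once around $A$ parallel to $C$, with orientation determined by $\epsilon_k$.

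In $H_1(U\Sigma;\bZ)$, the velocity class of this new arc, relative to the old one, is the velocity class $\pm\overrightarrow{C}$ of a parallel copy of $C$, corrected by a multiple of $z$ coming from the turning at entry and exit. I expect this to give
\[
\overrightarrow{T_C\alpha} = \overrightarrow{\alpha} + \sum_k \epsilon_k\,(\overrightarrow{C} + c\,z)
\]
for a universal constant $c$. The turns into and out of the parallel copy should cancel, since one turns left and the other right by the same angle, which suggests $c=0$. To make this rigorous I would compare with a regular homotopy inside $A$, and normalize using the case where $\alpha$ is a small loop (disjoint from $C$, so no change) or a transverse simple arc.

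Pairing with $\xi$ then gives
\[
\rot_\xi(T_C\alpha) - \rot_\xi(\alpha) = (\alpha\cdot C)\,\rot_\xi(C),
\]
up to a global sign. After fixing orientation conventions and the $T_C$ versus $T_C^{-1}$ issue, this identifies $k_\xi(T_C)$ with $(\rot_\xi C)(\cdot[C])$.

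An alternative route, which is cleaner for the bookkeeping, works through the quadratic form via \eqref{eq:kqq}: $k_\xi(T_C)(\gamma) = q_\xi(T_C^{-1}\gamma) - q_\xi(\gamma)$. One would factor $T_C^{-1}\gamma$ as a product of $\gamma$'s pieces with conjugates of $C^{\pm1}$ inserted at each intersection point, and apply the product formula \eqref{eq:qprod}. Here $q_\xi(C) = \rot_\xi(C) + 1$, since $C$ is embedded, and the intersection-number terms produced by \eqref{eq:qprod} should cancel against the $+1$'s. For a nonseparating $C$ one can instead reduce to loops $\gamma$ meeting $C$ once, and then use linearity.

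The main obstacle is the sign and orientation bookkeeping: the direction of the right-handed twist, the convention $\varphi\cdot\xi = \xi\circ(d\varphi^{-1})_*$, and the convention for $\cdot[C]$. In addition, the claim that the entry and exit turning contributions cancel exactly, so that no extra $z$-multiple appears, needs a careful local model.
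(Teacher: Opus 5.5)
Your proposal is correct and follows the same route as the paper. The paper also evaluates $k_\xi(T_C)$ on immersed loops $\alpha$ and reduces to $\rot_\xi(T_C^{-1}\alpha)-\rot_\xi(\alpha)=([\alpha]\cdot[C])\rot_\xi(C)$, which it asserts without justification; your annulus model justifies it. The two points you leave open close at once. First, $c=0$: in the product framing of the annular neighbourhood $A$, the velocity of the twisted arc always has positive component transverse to $C$, so the difference of velocity classes in $H_1(UA;\bZ)$ has zero fibre degree, as does $\overrightarrow{C}$. Second, projecting to $H_1(\Sigma;\bZ)$, where $(T_C^{-1})_*[\alpha]-[\alpha]=([\alpha]\cdot[C])[C]$, fixes the sign.
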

\begin{proof} It suffices to check the formula by evaluating at any immersed loop $\alpha: S^1 \to \Sigma$. As before, we denote by $\overrightarrow{\alpha} \in H_1(U\Sigma; \bZ)$ the homology class of the velocity vector $\overset\cdot\alpha: S^1 \to U\Sigma$. Then we have 
$$
\aligned
& \langle k_\xi(T_C), [\alpha]\rangle
= \langle T_C\cdot \xi - \xi, [\alpha]\rangle
= \langle \xi \circ (d{T_C}\inv)_*, \overrightarrow{\alpha}\rangle -\langle \xi, \overrightarrow{\alpha}\rangle\\
&= \rot_\xi({T_C}\inv \alpha) - \rot_\xi(\alpha) =  ([\alpha]\cdot[C])\rot_\xi(C).
\endaligned
$$
This proves the lemma. 
\end{proof}
Now we can determine the (non)triviality of the class $k_{\bK}$.
\begin{thm}\label{thm:nontrivial} The class $k_\bK \in H^1(\Gamma_{g,n+1}; H^1(\Sigma_{g,n+1}; \bK))$ is nontrivial if and only if $g \geq 2$.
\end{thm}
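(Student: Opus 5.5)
We have to show two things: $k_\bK$ is trivial when $g\le 1$, and nontrivial when $g\ge 2$. By definition, $k_\bK$ is trivial exactly when some $\bK$-framing $\xi$ makes $k_\xi$ a coboundary. Since $F(\Sigma;\bK)$ is an $H^1(\Sigma;\bK)$-torsor, this is the same as saying some $\xi$ is $\Gamma$-invariant, i.e. $k_\xi\equiv 0$. Because $\Gamma$ is generated by Dehn twists along simple closed curves, Lemma \ref{lem:DT} reduces everything to one question: is there a framing with $\rot_\xi(C)\,(\cdot[C])=0$ for every simple closed curve $C$?

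\textbf{Case $g=0$.} Every simple closed curve is separating, so $[C]$ is a $\bZ$-combination of boundary classes. These classes have zero algebraic intersection with every class in $H_1(\Sigma;\bZ)$. Hence $\cdot[C]=0$ in $H^1(\Sigma;\bK)$, so $k_\xi(T_C)=0$ for all $C$ and $k_\xi=0$ for every $\xi$.

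\textbf{Case $g=1$.} I would take $\xi$ to be the $\bK$-reduction of the $\bZ$-framing given by a nowhere vanishing vector field on $\Sigma_{1,n+1}$ that extends the translation-invariant field on the torus. Concretely, view $\Sigma$ as $T^2$ minus $n+1$ disks, and arrange the disks to lie inside a small disk disjoint from chosen curves.
\begin{itemize}
\item A separating $C$ again has $\cdot[C]=0$, so it imposes nothing.
\item A nonseparating $C$ is, up to isotopy in $\Sigma$, a geodesic-type curve in $T^2$ avoiding the small disk. Its rotation number with respect to the constant field is $0$.
\end{itemize}
The main care needed is to justify that every nonseparating simple closed curve in $\Sigma_{1,n+1}$ can be isotoped to such a straight curve. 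This follows because $\Gamma$ acts transitively on nonseparating curves, so it suffices to exhibit that the constant field is invariant under a generating set. Alternatively, one can use the Humphries-type generators: the twists along the two standard meridian/longitude curves together with the curves encircling boundary components; the latter are separating. Along the standard straight curves $\rot=0$, so $k_\xi$ vanishes on the generators. Therefore $k_\xi=0$.

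\textbf{Case $g\ge 2$.} Suppose $\xi$ is $\Gamma$-invariant; we derive a contradiction. Choose a nonseparating simple closed curve $C$. Since $\cdot[C]$ is a nonzero primitive element of $H^1(\Sigma;\bZ)$ (there is a curve meeting $C$ once), we get $k_\xi(T_C)=0$ iff $\rot_\xi(C)=0$ in $\bK$. So $\rot_\xi$ must vanish on all nonseparating simple closed curves.

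Now inside $\Sigma$ choose disjoint nonseparating curves $C_1,C_2,C_3$ bounding a pair of pants $Y$ embedded in a genus-two subsurface. Such a configuration exists because $g\ge2$: take $C_1$, $C_2$ as meridians of two handles and $C_3$ nonseparating. Apply the Poincaré–Hopf formula, Lemma \ref{lem:PH}, to $Y$ with the restricted framing, orienting the curves as boundary of $Y$. This gives $\sum_i\rot_\xi(C_i)=\chi(Y)=-1$, whereas invariance forces the sum to be $0$. Since $-1\neq 0$ in $\bK$ (the unit is nonzero), this is a contradiction.

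\textbf{Two checks.} First, restricting $\xi$ to $UY$ gives a $\bK$-framing of $Y$ whose rotation numbers agree with those computed in $\Sigma$; this is naturality of $\overrightarrow{C}$. Second, reversing orientation negates $\rot$, so vanishing is orientation-independent.

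The main obstacle is the existence of the pants with all three boundary curves nonseparating. For $g\ge2$ this is explicit: take $C_1=\alpha_1$, $C_2=\alpha_2$ and $C_3$ the band sum of $\alpha_1$ and $\alpha_2$. Its class is $\pm([\alpha_1]+[\alpha_2])\neq0$, so it is nonseparating.
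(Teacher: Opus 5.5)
The $g=1$ case has a gap when there are at least two boundary components ($n\ge 1$): neither of your two justifications works there. The other parts are fine. The reduction to the existence of a $\Gamma$-invariant framing is correct, and so is your case $g=0$. Your case $g\ge 2$ is the paper's pair-of-pants argument, phrased via the invariant framing $\xi-u$ instead of $k_\xi=\delta u$. One small fix there: when $\partial\Sigma\neq\emptyset$ a nonzero class can still be separating, so justify that $C_3$ is nonseparating by $[C_3]\cdot[\beta_1]=\pm1$.

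\textbf{First justification fails.} It is false that every nonseparating simple closed curve of $\Sigma=T^2\setminus(D_0\sqcup\cdots\sqcup D_n)$ is isotopic \emph{in $\Sigma$} to a straight curve avoiding the small disk $D$. Let $C'$ be a horizontal curve that crosses $D$, passing between $D_1$ and the remaining removed disks. $C'$ is nonseparating. Together with the horizontal curve $\alpha$ avoiding $D$, it cuts $T^2$ into two annuli, each containing removed disks. So $\alpha$ and $C'$ do not cobound an annulus in $\Sigma$ and are not isotopic there. But every straight curve of that slope avoiding $D$ is isotopic to $\alpha$. Appealing to transitivity of $\Gamma$ on nonseparating curves does not help: transporting $\rot_\xi$ along $\Gamma$ presupposes the invariance you are trying to prove.

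\textbf{Second justification fails.} $T_\alpha$, $T_\beta$ and twists about separating curves do not generate $\Gamma_{1,n+1}$ for $n\ge1$. Separating twists act trivially on $H_1(\Sigma;\bZ)$, and $T_\alpha,T_\beta$ preserve the span of $[\alpha],[\beta]$. However, $[C']=[\alpha]\pm[\partial_1\Sigma]$, and $T_{C'}$ sends $[\beta]$ to $[\beta]\pm[C']$, which leaves that span.

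\textbf{How to repair it.} Compute in the torus. Your $\xi$ is the restriction of the constant framing $\xi_0$ of $UT^2$. By naturality, $\rot_\xi(C)=\langle\xi_0,\overrightarrow{C}\rangle$ evaluated in $UT^2$, and this depends only on the regular homotopy class of $C$ in $T^2$. A nonseparating curve of $\Sigma$ stays nonseparating in $T^2$, since gluing the disks back keeps the complement connected. Hence it is isotopic \emph{in $T^2$} to a straight line, and so $\rot_\xi(C)=0$. For separating $C$ you already have $\cdot[C]=0$. By Lemma~\ref{lem:DT}, $k_\xi$ then vanishes on all Dehn twists, hence on $\Gamma_{1,n+1}$.

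This is close in spirit to the paper's route. The paper caps all boundary components to reduce to $\Gamma_{1,0}\cong SL_2(\bZ)$, which is generated by $T_\alpha$ and $T_\beta$. There it shows $k_\xi=\delta u_\xi$ for every $\xi$, with $u_\xi$ built from $\rot_\xi\alpha$ and $\rot_\xi\beta$. Your constant framing is the case $u_\xi=0$.
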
 
\begin{proof} For any simple closed curve $C \subset \Sigma$ and any $u \in H^1(\Sigma; \bK)$, the value $(\delta u)(T_C)$ of the coboundary $\delta u \in Z^1(\Gamma; H^1(\Sigma; \bK))$ at the Dehn twist $T_C$ is given by
\begin{equation}\label{eq:cob}
(\delta u)(T_C) = \langle u, [C]\rangle (\cdot [C]).
\end{equation}
In fact, for any loop $\gamma$ in $\Sigma$, 
we have $
\langle (\delta u)(T_C), [\gamma]\rangle =
\langle u\circ ({T_C}\inv) - u, [\gamma]\rangle =
\langle u, ({T_C}\inv)_*[\gamma] -[\gamma]\rangle=
\langle u, ([\gamma] \cdot [C])[C]\rangle=
\langle u, [C]\rangle ([\gamma] \cdot [C])
$.
\par
(I) The case $g=0$. If $n \leq 0$, it is trivial since $H^1(\Sigma_{0,n+1}; \bK) = 0$. So we suppose $n \geq 1$. Then the homology classes $\overrightarrow{\pa_j\Sigma}$, $1 \leq j \leq n$, and the fiber class $z$ generate
the homology group $H_1(U\Sigma; \bZ)$. The mapping class group $\Gamma = \Gamma_{0,n+1}$ acts trivially on these homology classes, and so on the cohomology group $H^1(U\Sigma; \bK)$. Hence, in the case $g =0$, we have 
\begin{equation}\label{eq:g0kxi}
k_\xi = 0: \Gamma \to H^1(\Sigma; \bK).
\end{equation}
\par
(II) The case $g=1$. By capping disks on all the boundary components, we can embed $\Sigma_{1,n+1}$ into the torus $\Sigma_{1,0}$. This induces a map $H^1(\Gamma_{1,0}; H^1(\Sigma_{1,0}; \bK)) \to H^1(\Gamma_{1,n+1}; H^1(\Sigma_{1,n+1}; \bK))$ which maps $k_{\bK}$ for $\Sigma_{1,0}$ to 
that for $\Sigma_{1,n+1}$, since any $\bK$-framing on $\Sigma_{1,0}$ induces a $\bK$-framing on $\Sigma_{1,n+1}$.  Here it should be remarked the torus is parallelizable. \par
Hence it suffices to show the vanishing of $k_{\bK}$ for $\Sigma_{1,0}$. 
Under the identification $\Sigma_{1,0} = \bR^2/\bZ^2$, we take two emmbeded loop $\alpha: t \in I \mapsto (t,0)\bmod{\bZ^2} \in \Sigma_{1,0}$ and $\beta: t \in I \mapsto (0, t)\bmod{\bZ^2} \in \Sigma_{1,0}$. 
Let $\xi \in F(\Sigma_{1,0}; \bK)$. 
From \eqref{eq:cob} we have $(\delta(\cdot[\gamma]))(T_C) = -([\gamma]\cdot[C])(\cdot [C])$ for any loop $\gamma$ in $\Sigma$. 
Hence we compute the coboundary $\delta u_\xi$ of the element $u_\xi := -  (\rot_\xi\beta)(\cdot [\alpha]) + (\rot_\xi\alpha)(\cdot [\beta]) \in H^1(\Sigma_{1,0}; \bK)$ as
$$
(\delta u_\xi)(T_C) = \left((\rot_\xi\beta)([\alpha]\cdot [C]) - (\rot_\xi\alpha)([\beta]\cdot [C]) \right)(\cdot [C]).
$$
In particular, we have $(\delta u_\xi)(T_\alpha) = (\rot_\xi\alpha)(\cdot [\alpha]) = k_\xi(T_\alpha)$ and $(\delta u_\xi)(T_\beta) = (\rot_\xi\beta)(\cdot [\beta]) = k_\xi(T_\beta)$ from Lemma \ref{lem:DT}. 
This implies $\delta u_\xi = k_\xi: \Gamma_{1,0} \to H^1(\Sigma_{1,0}; \bK)$,  
since the mapping class group $\Gamma = \Gamma_{1,0} \cong SL_2(\bZ)$ is generated by the Dehn twists $T_\alpha$ and $T_\beta$. This proves $k_\bK = 0$ in the case $g=1$. \par
(III) The case $g \geq 2$. Then we have a subsurface $S$ of $\Sigma$ which is diffeomorphic to a pair of pants, and each of whose boundary components is non-separating in the surface $\Sigma$. Let $C_k := \pa_kS$, $1 \leq k \leq 3$, be the three boundary components of the pair of pants $S$. 
Now assume that $k_\xi = \delta u$ for some $u \in H^1(\Sigma; \bK)$. 
From Lemma \ref{lem:DT} and \eqref{eq:cob}, we have $(\rot_\xi C_k)(\cdot [C_k])
= k_\xi(T_{C_k}) = (\delta u)(T_{C_k}) = \langle u, [C_k]\rangle (\cdot [C_k])$. 
Since $C_k$ is non-separating, we have $[\gamma_k]\cdot [C_k] = 1$ for some loop $\gamma_k$ in $\Sigma$. Hence we obtain $\rot_\xi C_k = \langle u, [C_k]\rangle$.
Consequently we obtain a contradiction 
$$
-1 = \chi(S) = {\sum}^3_{k=1}\rot_\xi C_k = \langle u, {\sum}^3_{k=1}[C_k]\rangle = 0
$$
from Lemma \ref{lem:PH} and $\sum^3_{k=1}[C_k] = 0 \in H_1(\Sigma; \bZ)$. 
This implies there is no $u \in H^1(\Sigma; \bK)$ satisfying $k_\xi = \delta u$, and 
proves $k_\bK \not= 0$ in the case $g\geq 2$. 
\end{proof}

The cohomology class $k_\bK$ can be regarded as the set of $1$-cocycles representing itself: $k_\bK \subset Z^1(\Gamma; H^1(\Sigma; \bK))$. 
Here $Z^1(\Gamma; M)$ means the module of $1$-cocycles of the group $\Gamma$ with coefficients in a $\Gamma$-module $M$. 
As will be shown, it can be identified with $F(\Sigma; \bK)$ as $H^1(\Sigma;\bK)$-torsors if $g \geq 1$.
In order to prove it, we consider the homology exact sequence of the pair
$(\Sigma, \pa\Sigma)$
\begin{equation}\label{eq:pair}
\cdots \to H_2(\Sigma, \pa\Sigma; \bZ) \overset{\pa_*}\to 
H_1(\pa\Sigma; \bZ) \overset{i_*}\to 
H_1(\Sigma; \bZ) \overset{j_*}\to 
H_1(\Sigma, \pa\Sigma; \bZ) \overset{\pa_*}\to 
\tilde H_0(\pa\Sigma; \bZ) \to 0,
\end{equation}
on which the mapping class group $\Gamma$ acts equivariantly.
\begin{lem}\label{lem:coinvariants}
{\rm (1)} The coinvariants $H_1(\Sigma; \bZ)_\Gamma$ vanishes if $g \geq 1$, while it equals $H_1(\Sigma; \bZ) = i_*(H_1(\pa\Sigma; \bZ))$ if $g =0$.
It is $\bZ$-free in any cases. \par
{\rm (2)} The map $\pa_*$ induces an isomorphism of 
the coinvariants $H_1(\Sigma, \pa\Sigma; \bZ)_\Gamma$
 onto $\tilde H_0(\pa\Sigma; \bZ)$ for any $g \geq 0$.\par
 {\rm (3)} The invariants $H^1(\Sigma; \bK)^\Gamma$ vanishes if $g \geq 1$, while it is isomorphic to the kernel $\Ker(\delta^*: H^1(\pa\Sigma; \bK) \to H^2(\Sigma, \pa\Sigma; \bK))$ 
 if $g =0$.\par
 {\rm (4)} The invariants $H^1(\Sigma, \pa\Sigma; \bK)^\Gamma$ is isomorphic to $\tilde H^0(\pa\Sigma; \bK)$ 
 through the connecting homomorphism $\delta^*$ for any $g \geq 0$. 
\end{lem}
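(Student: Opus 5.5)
The plan is to use the Dehn twist formula \eqref{eq:cob} in its homological form: for a simple closed curve $C$ and any $x$, we have $(T_C)_*x - x = \pm(x\cdot[C])[C]$. Hence the submodule $\{(\varphi-1)x\}$ generated in $H_1(\Sigma;\bZ)$ (resp. $H_1(\Sigma,\pa\Sigma;\bZ)$) contains $(x\cdot [C])[C]$ for all $x$ and all simple closed curves $C$. Statements (3) and (4) will be deduced from (1) and (2) by duality. Since $H^1(\Sigma;\bK)=\Hom(H_1(\Sigma;\bZ),\bK)$, we have $H^1(\Sigma;\bK)^\Gamma=\Hom(H_1(\Sigma;\bZ)_\Gamma,\bK)$. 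Similarly, $H^1(\Sigma,\pa\Sigma;\bK)^\Gamma=\Hom(H_1(\Sigma,\pa\Sigma;\bZ)_\Gamma,\bK)$, where we use that $H_0(\Sigma,\pa\Sigma;\bZ)=0$ when $\pa\Sigma\neq\emptyset$; the closed case is handled by the same argument with no boundary terms.

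For (1) with $g\ge1$: $H_1(\Sigma;\bZ)$ is generated by the classes $[\alpha_i],[\beta_i]$ and $[\pa_j\Sigma]$. Each $[\alpha_i]$ is killed in the coinvariants, since $T_{\alpha_i}^{\pm1}\beta_i-\beta_i=\pm[\alpha_i]$; similarly for $[\beta_i]$. For a boundary class $[\pa_j\Sigma]$, choose a non-separating simple closed curve $C$ and a simple closed curve $D$ with $D\cdot C=1$ such that $[D]=[D']+[\pa_j\Sigma]$ for another simple curve $D'$ also meeting $C$ once. This is done by band-summing $D'$ with a curve parallel to $\pa_j\Sigma$; equivalently, $C_1=$ a curve isotopic to $\alpha_1$ and $C_2$ with $[C_2]=[\alpha_1]+[\pa_j\Sigma]$ bound a pair of pants together with $\pa_j\Sigma$. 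The images of $[C_1]$ and $[C_2]$ both vanish in the coinvariants, because each is non-separating and therefore has a dual curve. So $[\pa_j\Sigma]=[C_2]-[C_1]$ vanishes as well. For $g=0$, $\Gamma$ acts trivially on $H_1$, which is spanned by the boundary classes (as in case (I) of Theorem \ref{thm:nontrivial}). Freeness is then clear.

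For (2): the group $\Gamma$ acts trivially on $\tilde H_0(\pa\Sigma)$, so $\pa_*$ factors through the coinvariants, and it is surjective. It remains to show that the kernel $j_*H_1(\Sigma;\bZ)$ dies in the coinvariants. Taking coinvariants is right exact, so the image of $H_1(\Sigma)_\Gamma$ surjects onto the kernel of $H_1(\Sigma,\pa\Sigma)_\Gamma\to\tilde H_0(\pa\Sigma)$. For $g\ge1$ this image is zero by (1). For $g=0$, $j_*$ is zero, since $H_1(\Sigma)$ is spanned by boundary classes, which lie in $\ker j_*$. Thus (2) follows.

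The main obstacle is the boundary-class step in (1) for $g\ge1$. There I would construct an explicit pair of pants inside $\Sigma$ whose three boundary curves are $\pa_j\Sigma$ and two non-separating curves, and then use the pants relation among their homology classes.
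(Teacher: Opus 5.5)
Your proposal is correct and follows essentially the same route as the paper. Dehn twists kill the non-separating classes, and a pair of pants bounded by $\pa_j\Sigma$ and two non-separating curves kills each boundary class. Right exactness of coinvariants applied to the pair sequence gives (2), and (3) and (4) follow by dualizing. The only difference is cosmetic: the paper uses a single curve $\beta$ dual to both non-separating pants curves, whereas you kill $[C_1]$ and $[C_2]$ separately, which is immaterial.
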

\begin{proof} (1) Suppose $g \geq 1$. Then there exists a non-separating simple closed curve on $\Sigma$, and the homology group $H_1(\Sigma; \bZ)$ is generated by non-separating simple closed curves and the boundary curves.
For any  non-separating simple closed curve $\alpha$, there is a  non-separating simple closed curve $\beta$ with $\alpha\cdot\beta = 1$. The right-handed Dehn twist $T_\alpha$ along $\alpha$ satisfies $T_\alpha[\beta] - [\beta] = [\alpha] \in H_1(\Sigma; \bZ)$, so that $[\alpha]$ vanishes in the coinvariants $H_1(\Sigma; \bZ)_\Gamma$. On the other hand, for any $0 \leq j \leq n$, we can take non-separating simple closed curves $\alpha$, $\beta$ and $\gamma$ as in the figure.
\begin{figure}[h]
\begin{center}
\begin{tikzpicture}
\coordinate (U) at (0,2);
\coordinate (D) at (0,0);
\draw[very thick, bend right = 15] (U) to (D);
\node at (-0.167,1) {$\wedge$};
\node at (0.7,1) {$\partial_j\Sigma$};
\draw[very thick, bend left = 15] (U) to (D);
\draw[very thick] (U) to [out=180, in=330] (-5,2.5);
\draw[very thick] (D) to [out=180, in=30] (-5,-0.5);
\draw[very thick] (-2,1) circle (0.4);
\draw[thick] (-2,1) circle (0.7);
\node at (-2.7,1) {$\vee$};
\node at (-3.0,1) {$\beta$};
\coordinate (ud) at (-2,1.4);
\coordinate (du) at (-2,0.6);
\coordinate (uu) at (-2,1.9);
\coordinate (dd) at (-2,0.1);
\draw[thick, bend right = 15] (ud) to (uu);
\draw[thick, bend left = 15] (dd) to (du);
\node at (-2.03,0.3) {$\vee$};
\node at (-2,-0.3) {$\gamma$};
\node at (-1.97,1.7) {$\wedge$};
\node at (-2,2.2) {$\alpha$};
\end{tikzpicture}
\end{center}
\label{fig:abc}
\end{figure}
Then we have $T_\gamma[\beta] - [\beta] = [\gamma] = [\alpha] - [\pa_j\Sigma]$, so that $[\pa_j\Sigma]$ also vanishes in the coinvariants. Hence we obtain $H_1(\Sigma; \bZ)_\Gamma = 0$ if $g \geq 1$.\par
If $g = 0$, the inclusion homomorphism $i_*: H_1(\pa\Sigma; \bZ) \to H_1(\Sigma; \bZ)$ is surjective, and $\Gamma$ acts on $H_1(\pa\Sigma; \bZ)$ trivially. Hence $\Gamma$ acts on $H_1(\Sigma; \bZ)$ trivially.\par
(2) By the right exactness of the tensor product $\otimes_{\bZ\Gamma}\bZ$,
the exact sequence \eqref{eq:pair} induces an exact sequence
$$
H_1(\Sigma; \bZ)_\Gamma \overset{j_*}\to H_1(\Sigma, \pa \Sigma; \bZ)_\Gamma \overset{\pa_*}\to \tilde H_0(\pa\Sigma; \bZ)_\Gamma \to 0,
$$
where we have $j_* = 0$ for any $g \geq 0$. This implies (2).
In fact, for $g \geq 1$, we have $H_1(\Sigma; \bZ)_\Gamma = 0$  from (1). For $g =0$, it follows from $j_*i_* = 0: H_1(\pa\Sigma; \bZ) \to H_1(\Sigma, \pa \Sigma; \bZ)$.  \par
(3) and (4) follow from (1) and (2), respectively.
\end{proof}

From Lemma \ref{lem:coinvariants} (3) together with the exact sequence 
$$
0 \to H^1(\Sigma; \bK)^\Gamma \to H^1(\Sigma; \bK) \overset{\delta}\to 
Z^1(\Gamma; H^1(\Sigma; \bK)) \to H^1(\Gamma; H^1(\Sigma; \bK)) \to 0,
$$
we obtain 
\begin{cor}\label{cor:kK} If $g \geq 1$, then the map
$$
\xi \in F(\Sigma; \bK) \mapsto k_\xi \in k_{\bK} \left(\subset Z^1(\Gamma; H^1(\Sigma; \bK)) \right),
$$
is a bijection.
\end{cor}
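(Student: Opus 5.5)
The plan is to check directly that the map $\xi \mapsto k_\xi$ is equivariant for the two torsor structures, and then to use the exact sequence displayed just before the corollary together with Lemma \ref{lem:coinvariants} (3).

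First, the map is well defined. By \eqref{kxi}, $k_\xi(\varphi) = \varphi\cdot\xi - \xi$ lies in $H^1(\Sigma;\bK)$, because $\iota^*(\varphi\cdot\xi) = \iota^*(\xi)$. This holds since $\Gamma$ preserves the fiber class $z$: the differential of an orientation-preserving diffeomorphism preserves the fiber orientation. The cocycle identity is immediate from the formula. By definition of $k_\bK$ as the extension class, every $k_\xi$ represents $k_\bK$, so $k_\xi \in k_\bK$ when $k_\bK$ is regarded as a subset of $Z^1(\Gamma; H^1(\Sigma;\bK))$.

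Next, the equivariance. For $u \in H^1(\Sigma;\bK)$,
$$k_{\xi+u}(\varphi) = \varphi\cdot\xi + \varphi\cdot u - \xi - u = k_\xi(\varphi) + (\delta u)(\varphi).$$
So the map intertwines the $H^1(\Sigma;\bK)$-torsor structure on $F(\Sigma;\bK)$ with the action of $H^1(\Sigma;\bK)$ on $k_\bK$ through $\delta$. The set $k_\bK$ is a coset of $\delta(H^1(\Sigma;\bK))$, hence a $\delta(H^1(\Sigma;\bK))$-torsor.

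Surjectivity: fix $\xi_0 \in F(\Sigma;\bK)$, which exists by our standing assumption. Any cocycle in $k_\bK$ has the form $k_{\xi_0} + \delta u$, and this equals $k_{\xi_0+u}$.

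Injectivity: if $k_\xi = k_{\xi'}$, write $\xi' = \xi + u$. Then $\delta u = 0$, so $u \in H^1(\Sigma;\bK)^\Gamma$. By Lemma \ref{lem:coinvariants} (3) this group vanishes when $g \geq 1$, so $u = 0$ and $\xi = \xi'$.

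The only nontrivial input is Lemma \ref{lem:coinvariants} (3), which is deduced from (1). One identifies $H^1(\Sigma;\bK) = \Hom(H_1(\Sigma;\bZ),\bK)$, so that
$$H^1(\Sigma;\bK)^\Gamma = \Hom(H_1(\Sigma;\bZ)_\Gamma, \bK),$$
and the coinvariants vanish for $g\ge1$. With that lemma available, I expect no real obstacle. The only point needing care is the sign convention $\varphi\cdot\xi = \xi\circ(d\varphi^{-1})_*$, which must be used consistently so that $\delta u(\varphi) = \varphi\cdot u - u$ matches the coboundary in the exact sequence.
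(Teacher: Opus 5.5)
Your proposal is correct and is the paper's argument. The paper also deduces the bijection from the equivariance $k_{\xi+u}=k_\xi+\delta u$ and from the exact sequence $0\to H^1(\Sigma;\bK)^\Gamma\to H^1(\Sigma;\bK)\overset{\delta}\to Z^1(\Gamma;H^1(\Sigma;\bK))\to H^1(\Gamma;H^1(\Sigma;\bK))\to 0$ together with Lemma~\ref{lem:coinvariants}~(3); you have written out the surjectivity and injectivity steps that the paper leaves implicit.
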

In fact, if $g \geq 1$, the set $k_{\bK}$ is a $H^1(\Sigma; \bK)$-torsor, and the map is $H^1(\Sigma; \bK)$-equivariant. \par
Consequently, if $g \geq 1$, we have isomorphisms of $H^1(\Sigma; \bK)$-torsors
$$
Q(\Sigma; \bK) \cong F(\Sigma; \bK) \cong k_{\bK}.
$$
On the other hand, if $g=0$, the map $\xi \in F(\Sigma_{0,n+1}; \bK) \to k_\xi = 0 \in k_\bK = \{0\}$ is constant by \eqref{eq:g0kxi} and Lemma \ref{lem:coinvariants} (1). \par

Now we assume $\pa\Sigma\neq \emptyset$, and consider the finite subsets $P \subset \pa\Sigma$ and $\tilde P \subset U\Sigma$ discussed in \S\ref{sec:lift} and \S\ref{sec:frame}. 
Then we have the extension class $k^\natural_\bK \in H^1(\Gamma; H^1(\Sigma, P;\bK))$ of the extension \eqref{eq:extP} of $\Gamma$-modules. 
For any $\xi \in F(\Sigma, P; \bK)$, it is represented by the cocycle $k_\xi: \Gamma \to H^1(\Sigma, P;\bK)$ defined by 
$$
k_\xi(\varphi) = \varphi\cdot \xi - \xi \in H^1(\Sigma, P;\bK)
$$
for $\varphi \in \Gamma$. 
The inclusion $U\Sigma \hookrightarrow (U\Sigma, \tilde P)$ induces a map $F(\Sigma, P; \bK) \to F(\Sigma; \bK)$ which is equivariant under the inclusion homomorphism $j'^*_P: H^1(\Sigma, P;\bK) \to H^1(\Sigma; \bK)$. 
Since all these are $\Gamma$-equivariant, we have 
\begin{equation}\label{eq:natlift}
(j'^*_P)_*k^\natural_\bK = k_\bK \in H^1(\Gamma; H^1(\Sigma; \bK)).
\end{equation}
Thus the cohomology class $k^\natural_\bK$ is a lift of $k_\bK$. 
We will discuss a lift of $k^\natural_\bK$ in the cohomology group $H^1(\Gamma; H^1(\Sigma, \pa\Sigma; \bK))$ in \S\ref{sec:relf}. 
\par
In order to study the moduli space of twisted holomorphic $1$-forms on Riemann surfaces, Apisa and Salter \cite[Theorem A.4]{ApisaSalter} computed $H^1(\Gamma; H^1(\Sigma, \pa\Sigma; \bK))$ and proved a natural isomorphism $F(\Sigma, P; \bK) \cong k^\natural_\bK$ for $\Sigma = \Sigma_{g,n+1}$ with $g \geq 3$ and $n+1 \geq 3$. 
Their computation of the first cohomology group $\Gamma$ is quite neat. 
\par

%
%

\section{$\bK$-expansions}\label{sec:exp}

In this section, we suppose $\pa\Sigma \neq\emptyset$, and choose a basepoint $\ast$ on the boundary $\pa\Sigma$. 
Then the fundamental group $\pi := \pi_1(\Sigma, \ast)$ is a free group of finite rank, so that we can consider its $\bK$-expansions stated below. 
Let $H = \HK := H_1(\Sigma; \bK)$ denote the first homology group of the surface $\Sigma$, $[\gamma] \in \HK$ the homology class of an element $\gamma \in \pi$. 
A map $\theta = \sum^\infty_{m=0}\theta_m: \pi \to \widehat{T}(\HK) := \prod^\infty_{m=0}{\HK}^{\otimes m}$, $\theta_m: \pi \to {\HK}^{\otimes m}$, $m \geq 0$, is a (generalized Magnus) {\it $\bK$-expansion}, if it is a group homomorphism into the multiplicative group of the completed tensor algebra $\widehat{T}(\HK)$, $\theta_0(\gamma) = 1$ and $\theta(\gamma) = [\gamma] \in \HK$ for any $\gamma \in \pi$. See, for example, \cite{Kaw05}. 
Since $\pi$ is a free group, expansions do exist. 
In particular, we may choose any element of $H^{\otimes m}$ as $\theta_m(\gamma_i)$ for any $m \geq 2$, if $\{\gamma_i\}_{i=1}^{b_1(\Sigma)}$ is a free generating system of the free group $\pi$. \par
Now we can construct a $\bK$-quadratic form from any $\bK$-expansion. 
Let $\flat: H_1(\Sigma; \bK)^{\otimes 2} \to \bK$ be the algebraic intersection form, i.e., $\flat(X_1X_2) = X_1\cdot X_2$ for $X_1, X_2 \in H$. 
Here and for the rest of this section, we often write $X_1X_2 = X_1\otimes X_2 \in H^{\otimes 2}$ and similarly $X_1X_2X_3 = X_1\otimes X_2\otimes X_3 \in H^{\otimes 3}$ for any $X_1, X_2, X_3 \in H$. 
\begin{lem}\label{lem:C}
The composite $\flat\circ \theta_2: \pi \to \bK$ of the second term $\theta_2$ of any expansion $\theta$ and the form $\flat$ is a $\bK$-quadratic form: 
$\flat\circ\theta_2 \in Q(\Sigma, \{\ast\}; \bK)$. 
\end{lem}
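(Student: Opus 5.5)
We expand the homomorphism property of $\theta$ in degree two and then apply $\flat$. Since $\theta$ is multiplicative, $\theta_0 = 1$ and $\theta_1(\gamma) = [\gamma]$, comparing the degree-two components of $\theta(\gamma_1\gamma_2) = \theta(\gamma_1)\theta(\gamma_2)$ gives
$$
\theta_2(\gamma_1\gamma_2) = \theta_2(\gamma_1) + \theta_2(\gamma_2) + [\gamma_1][\gamma_2] \in H^{\otimes 2}
$$
for any $\gamma_1, \gamma_2 \in \pi$. The form $\flat$ is $\bK$-linear and satisfies $\flat([\gamma_1][\gamma_2]) = [\gamma_1]\cdot[\gamma_2]$. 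Applying $\flat$ to the displayed identity therefore yields
$$
\flat\theta_2(\gamma_1\gamma_2) = \flat\theta_2(\gamma_1) + \flat\theta_2(\gamma_2) + [\gamma_1]\cdot[\gamma_2].
$$

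It remains to match the last term with the paper's intersection number $\gamma_1\cdot\gamma_2$ for morphisms in $\Pi\Sigma\vert_P$ with $P = \{\ast\}$. Here $\bullet_0 = \bullet_1 = \bullet_2 = \ast$, so the paper notes that $\gamma_1\cdot\gamma_2$ does not depend on the convention. The extending arcs $\nu_\ast\vert_{[0,1]}$ and $\nu_\ast\vert_{[-1,0]}$ only modify the loops near the boundary, so the resulting relative classes lie in $H_1(\Sigma, A)$ and $H_1(\Sigma, B)$ and restrict to the absolute classes $[\gamma_1], [\gamma_2] \in H_1(\Sigma;\bZ)$. Under this restriction the pairing $H_1(\Sigma, A)\times H_1(\Sigma, B) \to \bZ$ becomes the usual intersection form on $H_1(\Sigma)$, which is the form used in $\flat$, taken after reduction to $\bK$. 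Hence $\gamma_1\cdot\gamma_2 = [\gamma_1]\cdot[\gamma_2]$.

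Since $\pi = \Mor(\Pi\Sigma\vert_{\{\ast\}})$ and every pair of its elements is composable, this verifies the defining identity of $Q(\Sigma, \{\ast\}; \bK)$, so $\flat\circ\theta_2 \in Q(\Sigma, \{\ast\}; \bK)$.

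The only step needing care is the identification of the intersection conventions, that is, checking that the orientation and sign conventions for $\flat$ agree with those defining $\gamma_1\cdot\gamma_2$ in \S\ref{sec:lift}. If a sign discrepancy appears, it would be handled by the paper's chosen normalisation of $\flat$, since both are meant to be the algebraic intersection form.
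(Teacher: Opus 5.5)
Your proposal is correct and follows the paper's proof: compare the degree-two parts of $\theta(\gamma_1\gamma_2)=\theta(\gamma_1)\theta(\gamma_2)$ and apply $\flat$. Your extra paragraph identifying $\gamma_1\cdot\gamma_2$ with $[\gamma_1]\cdot[\gamma_2]$ for $P=\{\ast\}$ is a detail the paper leaves implicit. The extended paths close up to loops representing $[\gamma_1]$ and $[\gamma_2]$, so the pairing is the usual intersection form, which is how $\flat$ is defined; no sign discrepancy can arise.
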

\begin{proof} Since $\theta$ is a group homorphism, we have 
$\theta_2(\gamma_1\gamma_2) = \theta_2(\gamma_1) + \theta_2(\gamma_2) + [\gamma_1]\otimes [\gamma_2] \in H_1(\Sigma; \bK)^{\otimes 2}$ for any $\gamma_1$ and $\gamma_2 \in \pi$. Applying the form $\flat$ to the equation, we obtain the lemma. 
\end{proof}

If the genus $g$ is positive, there is a unique $\bK$-framing $\xi_\theta \in F(\Sigma; \bK)$ satisfying $\mathfrak{q}(\xi_\theta) = \flat\circ\theta_2$ from Corollary \ref{cor:kK}. Here we remark that $k_\xi = 0$ for any $\xi \in F(\Sigma; \bK)$ if $g=0$, from the proof of Theorem \ref{thm:nontrivial}. So we suppose $g \geq 1$ for the rest of this section. 
\par
Any $\bK$-expansion $\theta$ induces a $1$-cocycle $\tau^\theta_1$ on the mapping class group $\Gamma$ of the surface as follows. 
For the rest of this section we write simply $H = \HK$, on which the mapping class group $\Gamma$ acts naturally. We denote by $|\varphi| \in \mathrm{Aut}(H)$ the automorphism induced by an element $\varphi \in \Gamma$. Moreover we denote $H^* := H^1(\Sigma; \bK) = \Hom_{\bK}(H, \bK)$. The map $\tau^\theta_1: \Gamma \to H^*\otimes H^{\otimes 2} = \Hom_{\bK}(H, H^{\otimes 2})$ is defined by 
\begin{equation}\label{eq:dfntau1}
\tau^\theta_1(\varphi)[\gamma] = \theta_2(\gamma) 
- |\varphi|^{\otimes 2}\theta_2(\varphi\inv(\gamma))
\end{equation}
for any $\varphi \in \Gamma$ and $\gamma \in \pi$ \cite[Lemma 2.2]{Kaw05}. 
It is a $1$-cocycle, since 
$$
\tau^\theta_1(\varphi\psi)[\gamma] - \tau^\theta_1(\varphi)[\gamma] 
= - |\varphi\psi|^{\otimes 2}\theta_2(\psi\inv(\varphi\inv\gamma))
+ |\varphi|^{\otimes 2}\theta_2(\varphi\inv\gamma)
= |\varphi|^{\otimes 2}\left(\tau^\theta_1(\psi)|\varphi|\inv[\gamma] \right)
$$
for any $\varphi, \psi \in \Gamma$ and $\gamma \in \pi$. 
Its cohomology class equals the extended first Johnson homomorphism introduced by Morita \cite{Mo93} if $2 \in \bK$ is invertible \cite{Kaw05}.
\begin{lem}\label{lem:nutau}
$$
(1\otimes \flat)\circ \tau^\theta_1 = -k_{\xi_\theta}: \Gamma \to H^*\otimes H^{\otimes 2}.
$$
\end{lem}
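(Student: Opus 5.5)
Applying $1\otimes\flat$ to $\tau^\theta_1(\varphi)$ gives an element of $H^*$. The plan is to evaluate both sides of the claimed identity on the class $[\gamma]$ of an arbitrary $\gamma\in\pi$ and compare them using \eqref{eq:kqq}. Since $H^*=\Hom_{\bK}(H,\bK)$ and the classes $[\gamma]$, $\gamma\in\pi$, span $H$ over $\bK$, this evaluation determines the element completely.

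Unwinding the definition \eqref{eq:dfntau1}, we get
$$
\bigl((1\otimes\flat)\circ\tau^\theta_1(\varphi)\bigr)[\gamma] = \flat\bigl(\theta_2(\gamma)\bigr) - \flat\bigl(|\varphi|^{\otimes 2}\theta_2(\varphi\inv(\gamma))\bigr).
$$
The key observation is that the intersection form $\flat$ is $\Gamma$-invariant, since mapping classes preserve the algebraic intersection number. Hence $\flat\circ|\varphi|^{\otimes 2}=\flat$, and the right-hand side becomes
$$
(\flat\circ\theta_2)(\gamma)-(\flat\circ\theta_2)(\varphi\inv(\gamma)).
$$

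By Lemma \ref{lem:C} and the definition of $\xi_\theta$, we have $\flat\circ\theta_2 = q_{\xi_\theta}$. The expression therefore equals $q_{\xi_\theta}(\gamma)-q_{\xi_\theta}(\varphi\inv\gamma)$. By \eqref{eq:kqq} this is $-\bigl(q_{\xi_\theta}\circ\varphi\inv - q_{\xi_\theta}\bigr)(\gamma)$, which is $-k_{\xi_\theta}(\varphi)[\gamma]$. Since this holds for every $\gamma\in\pi$, we conclude $(1\otimes\flat)\circ\tau^\theta_1=-k_{\xi_\theta}$.

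Here we use that $k_{\xi_\theta}(\varphi)$, as a functional on $H$, agrees with the homomorphism $\pi\to\bK$ given by $q_{\xi_\theta}\circ\varphi\inv-q_{\xi_\theta}$. This holds because that homomorphism factors through the abelianization, as in the proof of Proposition \ref{prop:FQ}.

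The only delicate point is bookkeeping. One must keep the sign and the direction of $\varphi$ versus $\varphi\inv$ consistent between the convention $\varphi\cdot\xi=\xi\circ(d\varphi\inv)_*$ used in \eqref{eq:kqq} and the definition \eqref{eq:dfntau1}. One must also confirm that $\flat$ contracts the two tensor factors of $H^{\otimes 2}$ in the way that makes it $\Gamma$-invariant. Once these conventions are checked, the argument is a direct computation.
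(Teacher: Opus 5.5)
Your proposal is correct and is the paper's proof. You evaluate on $[\gamma]$ and use $\flat\circ|\varphi|^{\otimes 2}=\flat$, which the paper's first equality uses implicitly, to reduce to $\flat\circ\theta_2(\gamma)-\flat\circ\theta_2(\varphi^{-1}(\gamma))$. You then identify $\flat\circ\theta_2=q_{\xi_\theta}$ and conclude with \eqref{eq:kqq}; you also rightly note that the target is really $H^*$.
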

\begin{proof}
From \eqref{eq:dfntau1} and \eqref{eq:kqq}, we have 
$$
\flat\tau^\theta_1(\varphi)[\gamma] 
= \flat\circ\theta_2(\gamma) - \flat\circ\theta_2(\varphi\inv(\gamma))
= q_{\xi_\theta}(\gamma) - q_{\xi_\theta}(\varphi\inv(\gamma))
= - k_\xi(\varphi)[\gamma],
$$
as was to be shown.
\end{proof}

On the other hand, we can consider a $\Gamma$-equivariant map  $\mathfrak{c}: H^*\otimes H^{\otimes 2} \to H$ defined by $\mathfrak{c}(f\otimes X_1X_2) = f(X_1)X_2$ for any $f \in H^*$ and $X_1, X_2 \in H$. 
Since $\tau^\theta_1$ is a $1$-cocycle, $\mathfrak{c}\circ \tau^\theta_1: \Gamma \to H$ is also a $1$-cocyle. We have the inclusion homomorphism $j^*: H^1(\Sigma, \pa\Sigma; \bK) \to H^1(\Sigma; \bK)$ with the Poincar\'e duality map
\begin{equation}\label{eq:vtheta}
\jmath: H = H_1(\Sigma; \bK) \cong H^1(\Sigma, \pa\Sigma; \bK) \overset{j^*}\to  H^1(\Sigma; \bK) = H^*,
\end{equation}
which maps $X\in H$ to $(Y \mapsto \flat(XY)) \in H^*$. 
We like to compare the two $1$-cocycles $(1\otimes \flat)\tau^\theta_1$ and $\jmath\circ\mathfrak{c}\circ \tau^\theta_1$ on the mapping class group $\Gamma$ with values in $H^*$. 
As will be shown in \S\ref{sec:relf}, however, we need to consider a groupoid version of expansions for the surface whose boundary is not connected.
Our previous work \cite[\S7.2]{KK15} already introduced a groupoid expansion $\theta$. 
But, for any $1 \leq j \leq n$, it satisfies $\theta_2(\pa_j\Sigma) = \frac12 [\pa_j\Sigma]^{\otimes 2}$, $C\circ\theta_2(\pa_j\Sigma) = 0$ and so $\rot_{\xi_\theta}(\pa_j\Sigma) = -1$. 
On the other hand, Taniguchi \cite{Tan25c} shows that any possible $\bK$-framing can be induced by a genus $g$ Gonzalez-Drinfeld associator for any field $\bK$ of characteristic $0$. 
It is our next problem to find an appropriate formulation of groupoid expansions. \par
\medskip
Thus we confine ourselves to the case $\Sigma = \Sigma_{g,1}$ for the rest of this section. But we consider an arbitrary commutative ring $\bK$ with unit. \par
Let $\zeta := [\pa\Sgone]\inv \in \pi$ be the negative boundary loop.
A {\it symplectic generating system} $\{\alpha_i, \beta_i\}^g_{i=1} \subset \pi$ is a free generating system of the free group $\pi$ satisfying the equation $\zeta = \prod^g_{i=1}\alpha_i\beta_i{\alpha_i}\inv{\beta_i}\inv \in \pi$.
Symplectic generator systems do exist. 
If we fix a symplectic generator system  $\{\alpha_i, \beta_i\}^g_{i=1}$, 
we denote $[\alpha_i] = A_i$ and $[\beta_i] = B_i \in H$. We have $\flat(A_iB_j) = \delta_{ij}$ and  $\flat(A_iA_j) = \flat(B_iB_j) = 0$ for any $1 \leq i, j \leq g$. 
We denote by $\{A^*_i, B^*_i\}^g_{i=1} \subset H^*$ the dual basis of $\{A_i, B_i\}^g_{i=1}$. 
\begin{lem}\label{lem:thze} For any $\bK$-expansion $\theta$ we have
$$
\theta_2(\zeta) 
= \sum^g_{i=1} A_iB_i - B_iA_i = \sum^g_{i=1} [A_i, B_i] 
$$
and 
\begin{equation}\label{eq:thz3}
\aligned
 \theta_3(\zeta) 
& = \sum^g_{i=1} \theta_2(\alpha_i)B_i + A_i\theta_2(\beta_i)
- \theta_2(\beta_i)A_i - B_i\theta_2(\alpha_i)
-(A_iB_i - B_iA_i)(A_i+B_i)\\
& = \sum^g_{i=1} [A_i, \theta_2(\beta_i) - B_iA_i]
- [B_i, \theta_2(\alpha_i) - A_iB_i].
\endaligned
\end{equation}
Here $[X, Y] = XY - YX \in H^{\otimes 2}$ is the commutator of $X$ and $Y \in H$. 
\end{lem}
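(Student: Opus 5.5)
The plan is a direct computation in the completed tensor algebra $\widehat{T}(H)$, using only that $\theta$ is a group homomorphism with $\theta_0 = 1$ and $\theta_1(\gamma) = [\gamma]$. The first step is to record the expansions of a generator and its inverse. For $\gamma \in \pi$ write $\theta(\gamma) = 1 + X + \theta_2(\gamma) + \theta_3(\gamma) + \cdots$ with $X = [\gamma]$. Then
$$
\theta(\gamma\inv) = 1 - X + (X^2 - \theta_2(\gamma)) + \bigl(-X^3 + X\theta_2(\gamma) + \theta_2(\gamma)X - \theta_3(\gamma)\bigr) + \cdots,
$$
which follows from inverting the power series.

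The second step is to expand a single commutator $c_i := \alpha_i\beta_i{\alpha_i}\inv{\beta_i}\inv$. Put $a = \theta(\alpha_i) - 1$ and $b = \theta(\beta_i) - 1$. A cleaner route than multiplying out four factors is the identity
$$
\theta(c_i) = 1 + (\theta(\alpha_i)\theta(\beta_i) - \theta(\beta_i)\theta(\alpha_i))\,\theta({\alpha_i}\inv)\theta({\beta_i}\inv),
$$
which holds because $\theta(\alpha_i\beta_i) = \theta(\beta_i\alpha_i)\,\theta(c')$ after rearrangement. Concretely, $\theta(c_i) - 1 = (\theta(\alpha_i\beta_i) - \theta(\beta_i\alpha_i))\,\theta((\beta_i\alpha_i)\inv)$. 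The bracket $\theta(\alpha_i)\theta(\beta_i) - \theta(\beta_i)\theta(\alpha_i) = ab - ba$ has no terms of degree below $2$. Its degree $2$ part is $A_iB_i - B_iA_i$, and its degree $3$ part is
$$
\theta_2(\alpha_i)B_i + A_i\theta_2(\beta_i) - \theta_2(\beta_i)A_i - B_i\theta_2(\alpha_i).
$$
Multiplying on the right by $\theta((\beta_i\alpha_i)\inv) = 1 - (A_i + B_i) + \cdots$ gives
$$
\theta_2(c_i) = [A_i, B_i], \qquad \theta_3(c_i) = \bigl(\text{the cubic terms above}\bigr) - (A_iB_i - B_iA_i)(A_i + B_i).
$$

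The third step is to multiply the $g$ commutators. Since each $\theta(c_i) = 1 + (\text{degree} \geq 2)$, all cross terms in the product $\prod_i \theta(c_i)$ have degree at least $4$. Hence $\theta_2(\zeta) = \sum_i \theta_2(c_i)$ and $\theta_3(\zeta) = \sum_i \theta_3(c_i)$, which yields the first line of \eqref{eq:thz3}.

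The last step is to rewrite this in commutator form, and it is the only place where sign bookkeeping can go wrong; everything else is routine. Expanding
$$
[A_i, \theta_2(\beta_i) - B_iA_i] - [B_i, \theta_2(\alpha_i) - A_iB_i]
$$
gives the four $\theta_2$ terms together with $-A_iB_iA_i + B_iA_iA_i + B_iA_iB_i - A_iB_iB_i$. This cubic remainder equals $-(A_iB_i - B_iA_i)(A_i + B_i)$, which matches the first line term by term.
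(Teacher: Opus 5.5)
Your proposal is correct and is essentially the paper's argument. The paper likewise reduces to a single commutator $c$, because $\theta_2,\theta_3$ are additive on $[\pi,\pi]$, and reads off the degree $2$ and $3$ parts of $\theta(\gamma_1\gamma_2)=\theta(c)\,\theta(\gamma_2\gamma_1)$; this is your identity $\theta(c)-1=\bigl(\theta(\gamma_1\gamma_2)-\theta(\gamma_2\gamma_1)\bigr)\theta((\gamma_2\gamma_1)^{-1})$ rearranged. Two cosmetic points: the aside about ``$\theta(c')$'' is garbled (the correct relation is $\alpha_i\beta_i=c_i\,\beta_i\alpha_i$), and you only need $\theta((\beta_i\alpha_i)^{-1})$ up to degree $1$, so the full inverse expansion in your first step is unnecessary.
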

\begin{proof} The maps $\theta_2$ and $\theta_3$ are group homomorphisms on the commutator subgroup $[\pi, \pi]$. Hence it suffices to compute $\theta_2$ and $\theta_3$ for a single commutator $\gamma_1\gamma_2{\gamma_1}\inv{\gamma_2}\inv$ of $\gamma_1, \gamma_2 \in \pi$. 
Only inside this proof, we write simply $\Gamma_1 := [\gamma_1]$ and $\Gamma_2 := [\gamma_2] \in H$. 
We look at the equation $\theta(\gamma_1\gamma_2) = \theta(\gamma_1\gamma_2{\gamma_1}\inv{\gamma_2}\inv\gamma_2\gamma_1)$, which we will compute explicitly as 
$$
\theta(\gamma_1\gamma_2) \equiv 
1 + (\Gamma_1+\Gamma_2) + \theta_2(\gamma_1\gamma_2) + \theta_3(\gamma_1\gamma_2)
$$
and so 
$$
\aligned
&\theta(\gamma_1\gamma_2{\gamma_1}\inv{\gamma_2}\inv\gamma_2\gamma_1)\\
& \equiv 1+ (\Gamma_1 + \Gamma_2) 
+ \theta_2(\gamma_1\gamma_2{\gamma_1}\inv{\gamma_2}\inv) +\theta_2(\gamma_2\gamma_1)\\
&\quad + \theta_3(\gamma_1\gamma_2{\gamma_1}\inv{\gamma_2}\inv) 
+ \theta_2(\gamma_1\gamma_2{\gamma_1}\inv{\gamma_2}\inv)(\Gamma_1+\Gamma_2)
+\theta_3(\gamma_2\gamma_1).
\endaligned
$$
Hence we obtain 
\begin{equation*}\label{eq:theta2}
\theta_2(\gamma_1\gamma_2{\gamma_1}\inv{\gamma_2}\inv) 
= \theta_2(\gamma_1\gamma_2) - \theta_2(\gamma_2\gamma_1)
= \Gamma_1\Gamma_2 - \Gamma_2\Gamma_1
= [\Gamma_1, \Gamma_2]
\end{equation*}
and 
\begin{equation*}\label{eq:theta3}
\aligned
&\theta_3(\gamma_1\gamma_2{\gamma_1}\inv{\gamma_2}\inv) 
= \theta_3(\gamma_1\gamma_2) - \theta_3(\gamma_2\gamma_1)
- \theta_2(\gamma_1\gamma_2{\gamma_1}\inv{\gamma_2}\inv)(\Gamma_1+\Gamma_2)\\
&= \theta_2(\gamma_1)\Gamma_2 + \Gamma_1\theta_2(\gamma_2)
- \theta_2(\gamma_2)\Gamma_1 - \Gamma_2\theta_2(\gamma_1)
-(\Gamma_1\Gamma_2 - \Gamma_2\Gamma_1)(\Gamma_1+\Gamma_2)\\
&= [\theta_2(\gamma_1), \Gamma_2] + [\Gamma_1, \theta_2(\gamma_2)]
- [\Gamma_1, \Gamma_2\Gamma_1] - [\Gamma_1\Gamma_2, \Gamma_2]\\
&= [\Gamma_1, \theta_2(\gamma_2) - \Gamma_2\Gamma_1]
- [\Gamma_2, \theta_2(\gamma_1) - \Gamma_1\Gamma_2].
\endaligned
\end{equation*}
This proves the lemma.
\end{proof}

Massuyeau \cite{Mas12} introduced the notion of a symplectic expansion for $\Sgone$ in the case $\bK$ is a field of characteristic $0$. 
We weaken his condition to get the following notion for any commutative ring $\bK$ with unit. 
A $\bK$-expansion $\theta: \pi \to \widehat{T}(H)$ is {\it weakly $3$-symplectic} if it satisfies the condition
\begin{equation}\label{eq:wsymp}
\theta_3(\zeta) = 0 \in {\HK}^{\otimes 3}.
\end{equation}
Here it should be remarked that a weakly $3$-symplectic expansion does not respect the coproduct of the complete Hopf algebra $\widehat{T}(H)$ in general. 
Weakly $3$-symplectic $\bK$-expansions do exist for any commutative ring $\bK$ with unit. In fact, from the second equation of \eqref{eq:thz3}, we may take $\theta_2(\alpha_i) = A_iB_i$ and $\theta_2(\beta_i) = B_iA_i$ for any $1 \leq i \leq g$, for example.\par
\begin{thm}\label{thm:tauc}
If a $\bK$-expansion $\theta: \pi \to \widehat{T}(\HK)$ is weakly $3$-symplectic, then we have
$$
(1\otimes \flat)\tau^\theta_1 = \jmath\circ\mathfrak{c}\circ \tau^\theta_1: \Gamma \to H^*.
$$
\end{thm}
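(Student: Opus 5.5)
The plan is to reduce the statement to a single tensor identity in $H^{\otimes 3}$, which comes from the fact that every $\varphi\in\Gamma=\Gamma_{g,1}$ fixes the boundary loop $\zeta$. Fix $\varphi\in\Gamma$ and write $t := \tau^\theta_1(\varphi) \in \Hom_\bK(H, H^{\otimes 2})$.

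\textbf{Step 1: a second weakly $3$-symplectic expansion.} Consider $\theta' := |\varphi|\circ\theta\circ\varphi\inv : \pi \to \widehat{T}(H)$, where $|\varphi|$ acts degreewise. This is again a $\bK$-expansion, and by \eqref{eq:dfntau1} its second term is $\theta'_2(\gamma) = \theta_2(\gamma) - t[\gamma]$. Since $\varphi$ fixes $\pa\Sgone$ pointwise, we have $\varphi\inv(\zeta)=\zeta$. Hence
$$
\theta'_3(\zeta) = |\varphi|^{\otimes 3}\theta_3(\zeta) = 0,
$$
so $\theta'$ is also weakly $3$-symplectic.

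\textbf{Step 2: the cyclic-invariance identity.} Apply the second formula of \eqref{eq:thz3} in Lemma \ref{lem:thze} to both $\theta$ and $\theta'$, using the \emph{same} fixed symplectic generating system $\{\alpha_i,\beta_i\}$. The terms $-[A_i,B_iA_i]+[B_i,A_iB_i]$ do not depend on the expansion, so they cancel in the difference. Subtracting therefore gives
$$
0=\theta_3(\zeta)-\theta'_3(\zeta)=\sum_{i=1}^g [A_i, t(B_i)] - [B_i, t(A_i)] \in H^{\otimes 3}.
$$
Now set $\tilde t := \sum_i A_i\otimes t(B_i) - B_i\otimes t(A_i) \in H^{\otimes 3}$. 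Because $\jmath$ is an isomorphism for $\Sgone$, with $\jmath(A_i)=B_i^*$ and $\jmath(B_i)=-A_i^*$, the tensor $\tilde t$ is exactly $t$ transported by $\jmath\inv\otimes 1\otimes 1$, up to a global sign to be fixed. The identity above reads $\tilde t = \sigma(\tilde t)$, where $\sigma(X_1X_2X_3)=X_2X_3X_1$. So $\tilde t$ is invariant under cyclic permutation of its three tensor factors.

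\textbf{Step 3: comparing the two contractions.} Both sides of the theorem are obtained from $t$ by a contraction with $\flat$.
\begin{itemize}
\item $(1\otimes\flat)t$ applies $\flat$ to the last two factors of $t$.
\item $\jmath\circ\mathfrak{c}\circ t$ pairs the $H^*$-factor with the first $H$-factor and then converts the remaining factor into an element of $H^*$ by $\jmath$, i.e.\ by $\flat$ against it.
\end{itemize}
Translated to $\tilde t$, these become the contractions of $\tilde t$ by $\flat$ on factors $(2,3)$ and on factors $(1,2)$, respectively, each leaving one free factor that is read through $\jmath$. Cyclic invariance of $\tilde t$ identifies these two contractions, which gives the theorem for each $\varphi$.

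The main obstacle is sign bookkeeping. Three places need care: the identification $t\leftrightarrow\tilde t$ via $\jmath$ and the dual basis, the orientation convention in $\flat(A_iB_i)=1$ versus $\flat(B_iA_i)=-1$, and the order of factors in $\mathfrak{c}$. I would fix these by checking the final identity on a basis element such as $f=A_1^*$ or $B_1^*$. If a sign mismatch appears, I would recheck it against the rotation $\sigma$ versus $\sigma^2$; both are symmetries of $\tilde t$, so either one can be used.
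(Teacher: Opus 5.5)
Your argument is correct, and it takes a genuinely different route from the paper's. The paper proves the identity only on generators. Using Johnson's generation theorem and the vanishing of $\tau^\theta_1$ at the boundary twist, it reduces to a Dehn twist $t_C$ along a non-separating curve, chooses a symplectic generating system with $C$ freely homotopic to $\alpha_1$, and writes out $\tau^\theta_1(t_C)$ term by term. It finds $(1\otimes\flat)\tau^\theta_1(t_C)=(\flat\theta_2(\alpha_1)-1)B^*_1$. It also finds that $\mathfrak{c}\tau^\theta_1(t_C)$ becomes $(\flat\theta_2(\alpha_1)-1)A_1$, but only after inserting $(\flat\otimes B^*_1)\theta_3(\zeta)=0$.

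You instead compare $\theta$ with $|\varphi|\circ\theta\circ\varphi\inv$ and use $\varphi\inv(\zeta)=\zeta$ to get the cyclic invariance of $\tilde t$. This gives the identity for every $\varphi\in\Gamma$ at once, with no generators and no appeal to the cocycle property.

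The sign bookkeeping you were worried about comes out exactly:
\begin{itemize}
\item Since $\jmath\inv(B^*_i)=A_i$ and $\jmath\inv(A^*_i)=-B_i$, you have $\tilde t=(\jmath\inv\otimes 1\otimes 1)\tau^\theta_1(\varphi)$ with no extra sign.
\item Since $f(X)=\flat(\jmath\inv(f)X)$, the two sides of the theorem are $\jmath$ applied to $(1\otimes\flat)\tilde t$ and to $(\flat\otimes 1)\tilde t$, respectively.
\item Since $(1\otimes\flat)\tilde t=(\flat\otimes 1)\sigma(\tilde t)$, the rotation $\sigma$ itself suffices; you never need $\sigma^2$.
\end{itemize}

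What your route buys is a structural explanation valid over any $\bK$: the transported tensor $\tilde t$ is cyclically invariant, and that is all that is needed to identify the two contractions. It also gives a sharper view of the hypothesis. Before imposing weak $3$-symplecticity, your identity reads $\tilde t-\sigma(\tilde t)=\theta_3(\zeta)-|\varphi|^{\otimes 3}\theta_3(\zeta)$. By $\Gamma$-invariance of $\flat$, the argument therefore only uses the vanishing of the vector $(\flat\otimes 1)\theta_3(\zeta)\in H$. This is consistent with the paper, which uses only the components $(\flat\otimes B^*_1)\theta_3(\zeta)$.

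What the paper's computation buys is the explicit value of both sides at each Dehn twist. That value can be read directly against Lemma~\ref{lem:DT} and Lemma~\ref{lem:nutau}.
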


This theorem together with Lemma \ref{lem:nutau} implies 
\begin{cor}\label{cor:ktauc}
If a $\bK$-expansion $\theta: \pi \to \widehat{T}(\HK)$ is weakly $3$-symplectic, then we have
$$
\jmath\circ\mathfrak{c}\circ \tau^\theta_1 = -k_{\xi_\theta}: \Gamma \to H^*.
$$
\end{cor}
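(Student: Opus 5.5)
The corollary itself needs only a short argument: it is obtained by chaining Theorem \ref{thm:tauc} with Lemma \ref{lem:nutau}. Let $\theta$ be weakly $3$-symplectic. Since $g\geq 1$, $\xi_\theta\in F(\Sigma;\bK)$ is well defined by Corollary \ref{cor:kK} and Proposition \ref{prop:FQ}, with $q_{\xi_\theta}=\flat\circ\theta_2$. Lemma \ref{lem:nutau} gives $(1\otimes\flat)\circ\tau^\theta_1=-k_{\xi_\theta}$ as maps $\Gamma\to H^*$; the target written there as $H^*\otimes H^{\otimes 2}$ should be read as $H^*$ after contracting with $\flat$. Theorem \ref{thm:tauc} then replaces the left-hand side by $\jmath\circ\mathfrak{c}\circ\tau^\theta_1$, which is exactly the claim. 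The only checks are that both identities are pointwise equalities of maps $\Gamma\to H^*$ rather than equalities of cohomology classes, and that the sign conventions in \eqref{eq:kqq} and \eqref{eq:dfntau1} agree; the computation in the proof of Lemma \ref{lem:nutau} confirms both.

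All the content therefore lies in Theorem \ref{thm:tauc}, which I plan to prove as follows. Write $\tau:=\tau^\theta_1(\varphi)\in\Hom_\bK(H,H^{\otimes 2})$ for a fixed $\varphi\in\Gamma$. Evaluated at $X\in H$, the left-hand side is $\flat(\tau(X))$. The right-hand side is $\flat\bigl(\mathfrak{c}(\tau)\,X\bigr)$, where in the symplectic basis
$$
\mathfrak{c}(\tau)=\sum_{i}\bigl(A_i^*\otimes 1\bigr)\tau(A_i)+\bigl(B_i^*\otimes 1\bigr)\tau(B_i),
$$
and $A_i^*\otimes 1$ contracts the first tensor slot. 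I will extract a linear constraint on $\tau$ from the fact that $\varphi$ fixes $\zeta$. Because $\theta_3$ is a homomorphism on $[\pi,\pi]$ and $\theta_3(\zeta)=0=\theta_3(\varphi\inv\zeta)$, I will apply the commutator formula \eqref{eq:thz3} once with the generators $\alpha_i,\beta_i$ and once with $\varphi\inv(\alpha_i),\varphi\inv(\beta_i)$, then push the second through $|\varphi|^{\otimes 3}$. Subtracting should yield
$$
\sum_i\bigl[A_i,\tau(B_i)\bigr]-\bigl[B_i,\tau(A_i)\bigr]=0\in H^{\otimes 3},
$$
where $[X,u]=Xu-uX$ for $u\in H^{\otimes 2}$. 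I must also account for the terms $B_iA_i$, $A_iB_i$ in \eqref{eq:thz3}; these cancel because $|\varphi|$ preserves $\sum_i[A_i,B_i]$.

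Next I contract this cubic identity against $\flat$ on two of the three slots, pairing with an arbitrary $Y\in H$ in the remaining slot. Contracting the first two slots turns $\sum_i A_i\tau(B_i)-B_i\tau(A_i)$ into a term of the form $\flat(\mathfrak{c}(\tau)\,\cdot)$, using $\flat(A_iX)=B_i^*(X)$ and $\flat(B_iX)=-A_i^*(X)$. The terms $\tau(B_i)A_i-\tau(A_i)B_i$ instead give $\flat(\tau(Y))$-type expressions through symplectic duality $\sum_i \flat(\cdot A_i)B_i-\flat(\cdot B_i)A_i=\pm\mathrm{id}$. Comparing these two contributions should give $\flat\tau(X)=\flat(\mathfrak{c}(\tau)X)$.

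I expect the main obstacle to be bookkeeping: choosing which pair of slots to contract so that the two kinds of terms land exactly on $(1\otimes\flat)\tau$ and $\jmath\,\mathfrak{c}\,\tau$, and keeping track of the signs from the antisymmetry of $\flat$. If one contraction leaves a stray term, I will try a second contraction and take a combination of the two.
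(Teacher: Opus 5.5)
Your deduction of the corollary is the paper's own: the paper states it as an immediate consequence of Theorem~\ref{thm:tauc} and Lemma~\ref{lem:nutau}. You are also right that the target in Lemma~\ref{lem:nutau} should read $H^*$.

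Where you differ is in how you prove Theorem~\ref{thm:tauc}. The paper works on generators. By Johnson's theorem, $\Gamma$ is generated by twists along non-separating curves together with the boundary twist, on which $\tau^\theta_1$ vanishes. For $t_C$ the paper chooses a symplectic system with $C\simeq\alpha_1$, computes $\tau^\theta_1(t_C)$ explicitly, and then uses the single component $(\flat\otimes B_1^*)\theta_3(\zeta)=0$.

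Your generator-free argument is correct. Apply \eqref{eq:thz3} to $\{\alpha_i,\beta_i\}$ and to $\{\varphi\inv\alpha_i,\varphi\inv\beta_i\}$, and push the second by $|\varphi|^{\otimes 3}$. For $\tau=\tau^\theta_1(\varphi)$ and any expansion this gives
\[
\sum_i [A_i,\tau(B_i)]-[B_i,\tau(A_i)]=(1-|\varphi|^{\otimes 3})\,\theta_3(\zeta).
\]
The terms $B_iA_i$ and $A_iB_i$ cancel term by term after the push-forward, so you do not need invariance of $\sum_i[A_i,B_i]$.

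A single contraction suffices and leaves no stray term. Applying $\flat\otimes 1$ to the left side gives $\mathfrak{c}(\tau)+\sum_i\flat(\tau(A_i))B_i-\flat(\tau(B_i))A_i$. Since $\jmath(A_i)=B_i^*$ and $\jmath(B_i)=-A_i^*$, applying $\jmath$ then yields $\jmath\,\mathfrak{c}(\tau)-\flat\circ\tau$.

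Your route avoids Johnson's generation theorem and the Dehn-twist bookkeeping. It also gives more. Because $\flat\otimes 1$ and $\jmath$ are equivariant, for an arbitrary expansion you get $\jmath\circ\mathfrak{c}\circ\tau^\theta_1-(1\otimes\flat)\circ\tau^\theta_1=-\delta w$ with $w=\jmath\,(\flat\otimes 1)\theta_3(\zeta)$. So $\jmath\circ\mathfrak{c}\circ\tau^\theta_1=-k_{\xi_\theta+w}$ always represents $-k_\bK$. The two cocycles coincide as soon as $(\flat\otimes 1)\theta_3(\zeta)=0$, which is in effect all the paper's computation uses. The paper's route, in exchange, gives the explicit value $(\flat\theta_2(\alpha_1)-1)B_1^*$ on each twist.
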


The origin of the corollary is in Johnson's theorem \cite[Theorem 2]{J80b} on the Torelli group. Later Salter \cite[\S5]{Sal} refined the result to relate it to $r$-spin structures for $r \in \bZ$. 
Morita \cite{Mo93} extended the first Johnson homomorphism to the whole mapping class group, and generalized Johnson's theorem to this situation \cite[Remark 4.9]{Mo93}, where the condition $\frac12 \in \bK$ was needed, since the composite of the commutator and the quotient $\La^2H \overset{[\cdot, \cdot]}\to H^{\otimes 2} \to \La^2H$ equals twice the identity. 
All the these works use the contraction map $C: X\wedge Y \wedge Z  \in\La^3H \mapsto (X\cdot Y)Z + (Y\cdot Z)X + (Z\cdot X)Y \in H$. 
But, in this paper,  we use another map $\mathfrak{c}$, which enables us to cover all the commutative rings $\bK$ with unit. 
If $\frac12\in \bK$, it coincides with the map $C$ on $\La^3H$. 
We can carry out a computation similar to the below one for the surface $\Sigma_{g, n+1}$, $g, n \geq 1$. But we need appropriate groupoid variants of $\bK$-expansions and Johnson homomorphisms, in order to make it a mathematical theorem. 
\par
The author \cite{Kaw06} constructed a $\bR$-symplectic expansion $\theta^{(C, P_0, v)} = \sum^\infty_{m=0}\theta^{(C, P_0, v)}_m$ of $\pi_1(\Sgone)$ canonically associated with any triple $(C, P_0, v)$, where $C$ is a closed compact Riemann surface of genus $g$, $P_0 \in C$ is a point, and $v \in T_{P_0}C$ is a nonzero tangent vector. 
The $\bR$-framing $\xi_{\theta^{(C, P_0, v)}}= \flat\circ \theta^{(C, P_0, v)}_2$ on $C \setminus \{P_0\}$ is given by the iterated integral of a connection form canonically associated with the triple $(C, P_0, v)$. 
Hence it is real analytic in $(C, P_0, v)$. 
Moreover $\flat\circ\theta^{(C, P_0, v)}_2$ depends only on $(C, P_0)$ from \cite[Proposition 6.3]{Kaw06}. 
Thus we obtain a real analytic family of $\bR$-framings of $C\setminus \{P_0\}$ parametrized by the moduli of pointed Riemann surfaces $(C, P_0)$.
It would be very interesting if one could find another geometric meaning of the $\bR$-framing. 
\par

\medskip
We devote the rest of this section to the proof of Theorem \ref{thm:tauc}. 
Since both of $(1\otimes \flat)\tau^\theta_1$ and $\jmath\circ\mathfrak{c}\circ \tau^\theta_1$ are $1$-cocycles, it suffices to check the equation for some generators of the group $\Gamma$. 
As was proved by Johnson \cite[Theorem 1, p.434]{J83}, 
the mapping class group $\Gamma$ is generated by the right-handed Dehn twists $t_C$ along non-separating simple closed curves $C$ and the right-handed Dehn twist along a simple closed curve parallel to the boundary $\pa\Sgone$. 
The cocycle $\tau^\theta_1$ vanishes at the latter Dehn twist. 
Hence it suffices to prove that the right-handed Dehn twists $t_C$ along any non-separating simple closed curve $C$ satisfies the given equation. \par
From the classification theorem of surfaces, we can have a symplectic generator system $\{\alpha_i, \beta_i\}^g_{i=1} \subset \pi$ such that $C$ is freely homotopic to $\alpha_1$, so that we will compute $\tau^\theta_1(t_C)$ explicitly.\par
\medskip
In the following computation we suppose $2 \leq i \leq g$ unless otherwise specified. The Dehn twist $t_C$ acts on the fundamental group $\pi$ by
$$
\aligned
&t_C(\alpha_1) = \alpha_1, \quad
t_C(\beta_1) = \beta_1\alpha_1, \quad
t_C(\alpha_i) = \alpha_i, \quad
t_C(\beta_i) = \beta_i,\\
& {t_C}\inv(\alpha_1) = \alpha_1, \quad
{t_C}\inv(\beta_1) = \beta_1{\alpha_1}\inv, \quad
{t_C}\inv(\alpha_i) = \alpha_i, \quad
{t_C}\inv(\beta_i) = \beta_i.
\endaligned
$$
If we denote by $T = |t_C|$ the action of $t_C$ on the homology group $H$, 
we have $T-1 = B^*_1\otimes A_1$ on $H$.
The transpose $T^*$ of $T$ is given by
$T^*-1= A_1\otimes B^*_1$ on $H^*$.
From what we recalled so far, we have 
$$
\aligned
&\tau^\theta_1(t_C)A_i = (1 - T^{\otimes 2})\theta_2(\alpha_i), \quad
\tau^\theta_1(t_C)B_i = (1 - T^{\otimes 2})\theta_2(\beta_i), \quad
\tau^\theta_1(t_C)A_1 = (1 - T^{\otimes 2})\theta_2(\alpha_1), \\
&\tau^\theta_1(t_C)B_1 
= \theta_2(\beta_1) - T^{\otimes 2}\theta_2(\beta_1{\alpha_1}\inv)
= \theta_2(\beta_1{\alpha_1}\inv\alpha_1) - T^{\otimes 2}\theta_2(\beta_1{\alpha_1}\inv)\\
& = (1 - T^{\otimes 2})\theta_2(\beta_1{\alpha_1}\inv) + \theta_2(\alpha_1) + 
(B_1-A_1)A_1.
\endaligned
$$
Consequently we obtain
\begin{equation}\label{eq:tau1tC}
\aligned
\tau^\theta_1(t_C) &= 
\sum^g_{i=2}A^*_i\otimes  (1 - T^{\otimes 2})\theta_2(\alpha_i)
+ \sum^g_{i=2}B^*_i\otimes  (1 - T^{\otimes 2})\theta_2(\beta_i)\\
&\quad + A^*_1\otimes (1-T^{\otimes 2})\theta_2(\alpha_1)\\
&\quad + B^*_1\otimes (1-T^{\otimes 2})\theta_2(\beta_1{\alpha_1}\inv)
+  \sout{B^*_1\otimes \theta_2(\alpha_1)} + B^*_1\otimes (B_1-A_1)A_1.
\endaligned
\end{equation}
\par
Since $T$ preserves the intersection form $\flat$, we have $\flat(1 - T^{\otimes 2}) = 0$, and so 
\begin{equation}\label{eq:nutautC}
(1\otimes\flat)(\tau^\theta_1(t_C))
= (\flat\theta_2(\alpha_1) -1)B^*_1.
\end{equation}

On the other hand, since
$
1- T^{\otimes 2} 
= - 1\otimes (T-1) -(T-1)\otimes 1 - (T-1)\otimes(T-1), 
$
we have 
$$
\aligned
&\mathfrak{c}(f\otimes(1-T^{\otimes 2})(X_1X_2))\\
&=-\mathfrak{c}(f\otimes (X_1(T-1)X_2+((T-1)X_1)X_2 + ((T-1)X_1)((T-1)X_2)))\\
&= -f(X_1)(T-1)X_2 - ((T^*-1)f)(X_1)X_2 - ((T^*-1)f)(X_1)(T-1)X_2\\
&= -f(X_1)B^*_1(X_2)A_1 - f(A_1)B^*_1(X_1)X_2 
- f(A_1)B^*_1(X_1)B^*_1(X_2)A_1\\
&= -(f\otimes B^*_1\otimes 1)(X_1X_2A_1+A_1X_1X_2) 
- (f\otimes B^*_1\otimes B^*_1)(A_1X_1X_2)A_1
\endaligned
$$
for $f \in H^*$ and $X_1, X_2 \in H$. 
Hence we have 
$$
\aligned
& \mathfrak{c}(A^*_i\otimes  (1 - T^{\otimes 2})\theta_2(\alpha_i))\\
& = -(A^*_i\otimes B^*_1\otimes 1)(\theta_2(\alpha_i)A_1 + A_1\theta_2(\alpha_i))
- (A^*_i\otimes B^*_1\otimes B^*_1)(A_1\theta_2(\alpha_i))\\
&= -(A^*_i\otimes B^*_1)(\theta_2(\alpha_i))A_1,\\
& \mathfrak{c}(B^*_i\otimes  (1 - T^{\otimes 2})\theta_2(\beta_i))\\
& = -(B^*_i\otimes B^*_1\otimes 1)(\theta_2(\beta_i)A_1 + A_1\theta_2(\beta_i))
- (B^*_i\otimes B^*_1\otimes B^*_1)(A_1\theta_2(\beta_i))\\
&= -(B^*_i\otimes B^*_1)(\theta_2(\beta_i))A_1,
\endaligned
$$
and
$$
\aligned
& \mathfrak{c}(A^*_1\otimes  (1 - T^{\otimes 2})\theta_2(\alpha_1))\\
& = -(A^*_1\otimes B^*_1\otimes 1)(\theta_2(\alpha_1)A_1 + A_1\theta_2(\alpha_1))
- (A^*_1\otimes B^*_1\otimes B^*_1)(A_1\theta_2(\alpha_1))\\
&= -(A^*_1\otimes B^*_1)(\theta_2(\alpha_1))A_1
- \sout{(B^*_1\otimes 1)\theta_2(\alpha_1)} - \sout{(B^*_1\otimes B^*_1)(\theta_2(\alpha_1))A_1},\\
& \mathfrak{c}(B^*_1\otimes  (1 - T^{\otimes 2})\theta_2(\beta_1{\alpha_1}\inv))\\
& = -(B^*_1\otimes B^*_1\otimes 1)(\theta_2(\beta_1{\alpha_1}\inv)A_1 + A_1\theta_2(\beta_1{\alpha_1}\inv))
- (B^*_1\otimes B^*_1\otimes B^*_1)(A_1\theta_2(\beta_1{\alpha_1}\inv))\\
&= -(B^*_1\otimes B^*_1)(\theta_2(\beta_1{\alpha_1}\inv))A_1\\
&= -(B^*_1\otimes B^*_1)(\theta_2(\beta_1))A_1+ \sout{(B^*_1\otimes B^*_1)(\theta_2(\alpha_1))A_1},
\endaligned
$$
where the last equality follows from $\theta_2(\beta_1) = \theta_2(\beta_1{\alpha_1}\inv\alpha_1) = \theta_2(\beta_1{\alpha_1}\inv) + \theta_2(\alpha_1) + (B_1-A_1)A_1$. 
As was underlined, each of the terms $(B^*_1\otimes 1)(\theta_2(\alpha_1))A_1$ and $(B^*_1\otimes 1)(\theta_2(\alpha_1)$ appears in $\mathfrak{c}\tau^\theta_1(t_C)$ twice with opposite signs. Hence we obtain 
\begin{equation}\label{eq:k1tautC}
\mathfrak{c}\tau^\theta_1(t_C) 
=\left(1 -\sum^g_{i=1}(A^*_i\otimes B^*_1)(\theta_2(\alpha_i))
+ (B^*_i\otimes B^*_1)(\theta_2(\beta_i))\right)A_1.
\end{equation}
Now, from the first line of \eqref{eq:thz3}, we have
\begin{equation*}
\aligned
 (\flat\otimes B^*_1)\theta_3(\zeta) 
&= \flat\theta_2(\alpha_1) - \flat(A_1B_1-B_1A_1)
+ \sum^g_{i=1}(B^*_i\otimes B^*_1)\theta_2(\beta_i) + (A^*_i\otimes B^*_1)\theta_2(\alpha_i) \\
&= \flat\theta_2(\alpha_1) -2
+ \sum^g_{i=1}(B^*_i\otimes B^*_1)\theta_2(\beta_i) + (A^*_i\otimes B^*_1)\theta_2(\alpha_i).\\
\endaligned
\end{equation*}
Here we used $\flat(A_iX) = B^*_i(X)$ and $\flat(B_iX) =  -A^*_i(X)$ for any $X \in H$. 
Therefore, if $\theta$ is weakly $3$-symplectic, then we have
$
\mathfrak{c}\tau^\theta_1(t_C) = (\flat\theta_2(\alpha_1) - 1)A_1,
$
which together with \eqref{eq:nutautC} and $\jmath(A_1) = B^*_1$
implies Theorem \ref{thm:tauc}.\qed
\par
Let $(12): H^{\otimes 2} \to H^{\otimes 2}$ be the switching map. 
From what we computed above, the map $\mathfrak{c}\circ(1\otimes (12)): H^*\otimes H^{\otimes 2} \to H$, $f\otimes (X_1X_2) \mapsto f(X_2)X_1$, does {\it not} satisfy a similar formula to Theorem \ref{thm:tauc}. 
\par

%
%
\section{Relative $\bK$-framings}\label{sec:relf}

In this section, we suppose $\pa\Sigma\neq\emptyset$, and prove that the Earle class $k_\bK \in H^1(\Gamma; H^1(\Sigma; \bK))$ 
has a natural lift with respect to the inclusion homomorphism $j^*$ in the cohomology exact sequence
$$
0 \to \widetilde{H}^0(\pa\Sigma; \bK) \to H^1(\Sigma, \pa\Sigma; \bK) 
\overset{j^*}\to H^1(\Sigma; \bK) 
\overset{i^*}\to H^1(\pa\Sigma; \bK) 
\overset{\delta^*}\to H^2(\Sigma, \pa\Sigma; \bK)
$$
of the pair $(\Sigma, \pa\Sigma)$. If we denote
$$
N_\pa(\Sigma) =  N_\pa(\Sigma; \bK) := \Ker(\delta^*) = \mathrm{Im}(i^*) \subset H^1(\pa\Sigma; \bK),
$$
we have 
\begin{equation}\label{eq:relS}
0 \to \widetilde{H}^0(\pa\Sigma; \bK) \to H^1(\Sigma, \pa\Sigma; \bK) 
\overset{j^*}\to H^1(\Sigma; \bK) 
\overset{i^*}\to N_\pa(\Sigma; \bK) \to 0\quad \text{(exact)}.
\end{equation}
\par
The lift should come from `relative $\bK$-framings' on $(\Sigma, \pa\Sigma)$. 
The notion of a relative framing was introduced by Randal-Williams \cite{RW14}. 
It is an isomorphism $U\Sigma \cong \Sigma\times S^1$ of oriented $S^1$-bundles, which restricts to a fixed isomorphism $\delta: U\Sigma\vert_{\pa\Sigma} \cong \pa\Sigma\times S^1$. 
In this paper, we call its homotopy class {\it a relative framing}. 
More precisely, for each $0 \leq j \leq n$, choose a number $\rho_j \in \bK$ and a single point $\bullet_j \in \pa_j\Sigma$. The vector $(\rho_j)^n_{j=0} \in \bK^{n+1}$
can be identified with a unique element $\rho \in H^1(\pa\Sigma; \bK)$ satisfying 
$\langle\rho, [\pa_j\Sigma]\rangle = \rho_j$ for each $0 \leq j \leq n$. 
Then we have $\rho \in N_\pa(\Sigma; \bK)$ if and only if $\sum^n_{j=0}\rho_j = 0$. 
Now we denote $P := \{\bullet_j\}^n_{j=0}$ and 
$$
F(\Sigma, P, \rho; \bK) := \{\xi \in F(\Sigma, P; \bK); \,\, 0 \leq \forall j \leq n, \, 
\rot_\xi(\pa_j\Sigma) = \rho_j\}, 
$$
whose element we call {\it a relative $\bK$-framing}. 
Here the inclusion $U\Sigma \hookrightarrow (U\Sigma, \tilde P)$ induces a map $F(\Sigma, P; \bK) \to F(\Sigma;\bK)$, so that we may consider the rotation number $\rot_\xi\pa_j\Sigma \in \bK$ for each $0 \leq j \leq n$. 
The evaluation map $\mathrm{Map}(\pa_j\Sigma, S^1) \to S^1$, $f \mapsto f(\bullet_j)$, at $\bullet_j \in \pa_j\Sigma$ is a fibration with fiber $\Omega S^1\simeq \bZ$.
Hence the tangent vector $v_{\bullet_j}$ and the rotation number $\rot_\xi(\pa_j\Sigma) \in \bZ$ describe the homotopy type of the space $\mathrm{Map}(\pa_j\Sigma, S^1)$ for each $0 \leq j \leq n$.
\par
Since $H^*(\pa\Sigma, P; \bK) \cong \prod^n_{j=0}\widetilde{H}^*(\pa_j\Sigma; \bK)$, 
the cohomology exact sequence of the triple $(\Sigma, \pa\Sigma, P)$ induces a short exact sequence
\begin{equation}\label{eq:SdP}
0 \to H^1(\Sigma, \pa\Sigma; \bK) \overset{j^*_P}\to H^1(\Sigma, P; \bK) \overset{i^*_P}\to N_\pa(\Sigma; \bK) \to 0\quad \text{(exact)}.
\end{equation}
\begin{lem}\label{lem:nonempty}
(1) The set $F(\Sigma, P, \rho; \bK)$ is non-empty if and only if $\sum^n_{j=0}\rho_j = \chi(\Sigma) \in \bK$.\par
(2) If the set $F(\Sigma, P, \rho; \bK)$ is non-empty, it is a $H^1(\Sigma, \pa\Sigma; \bK)$-torsor.
\end{lem}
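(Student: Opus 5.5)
The plan is to study the composite map
$$
R: F(\Sigma, P; \bK) \to F(\Sigma; \bK) \to \bK^{n+1}, \quad \xi \mapsto (\rot_\xi(\pa_j\Sigma))^n_{j=0},
$$
and to describe how it interacts with the $H^1(\Sigma, P; \bK)$-torsor structure on $F(\Sigma, P; \bK)$. The relevant input is \eqref{eq:xixi'}. Its relative version reads $\rot_{\xi+u}(\pa_j\Sigma) = \rot_\xi(\pa_j\Sigma) + \langle u, [\pa_j\Sigma]\rangle$ for $u \in H^1(\Sigma, P; \bK)$, where $\langle u, [\pa_j\Sigma]\rangle$ means $\langle i^*_P(u), [\pa_j\Sigma]\rangle$. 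This holds because the loop $\pa_j\Sigma$ is a closed cycle in $\Sigma$, and the map $F(\Sigma,P;\bK)\to F(\Sigma;\bK)$ is equivariant along $j'^*_P$. In other words, $R(\xi + u) = R(\xi) + i^*_P(u)$, once we identify $N_\pa(\Sigma;\bK)\subset H^1(\pa\Sigma;\bK) = \bK^{n+1}$ via the evaluations $\langle\cdot, [\pa_j\Sigma]\rangle$.

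For (1), the ``only if'' direction follows directly from Lemma \ref{lem:PH}: any $\xi \in F(\Sigma, P, \rho; \bK)$ maps to a $\bK$-framing of $\Sigma$, so $\sum_j \rho_j = \sum_j\rot_\xi(\pa_j\Sigma) = \chi(\Sigma)$.

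For the ``if'' direction, pick any $\xi_0 \in F(\Sigma, P; \bK)$, which exists by \eqref{eq:extP}. Set $\rho^0 := R(\xi_0)$. By Lemma \ref{lem:PH} we have $\sum_j \rho^0_j = \chi(\Sigma)$, so the difference $\rho - \rho^0 \in H^1(\pa\Sigma;\bK)$ has coordinate sum $0$. By the remark preceding \eqref{eq:SdP}, this means $\rho - \rho^0 \in N_\pa(\Sigma;\bK)$. The surjectivity of $i^*_P$ in \eqref{eq:SdP} then gives $u \in H^1(\Sigma, P;\bK)$ with $i^*_P(u) = \rho-\rho^0$, and $\xi_0 + u \in F(\Sigma,P,\rho;\bK)$.

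For (2), the formula $R(\xi+u) = R(\xi) + i^*_P(u)$ shows two things. First, $H^1(\Sigma,\pa\Sigma;\bK)$, acting through $j^*_P$, preserves the fiber $R^{-1}(\rho)$, since $i^*_P j^*_P = 0$. Second, if $\xi, \xi' \in R^{-1}(\rho)$, then their difference $u$ in the $H^1(\Sigma,P;\bK)$-torsor satisfies $i^*_P(u)=0$. By exactness of \eqref{eq:SdP}, $u = j^*_P(w)$ for a unique $w$, uniqueness coming from the injectivity of $j^*_P$. Freeness and transitivity then follow from those of the $H^1(\Sigma,P;\bK)$-action.

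The main point requiring care is the identification of $i^*_P$ with ``evaluation on the boundary loops''. Concretely, the pairing of $u\in H^1(\Sigma,P;\bK)$ with the closed cycle $[\pa_j\Sigma]$ must equal the $j$-th coordinate of $i^*_P(u) \in N_\pa(\Sigma;\bK) \subset H^1(\pa\Sigma;\bK)$. One also needs the compatibility of $\rot_{\xi}$ for a based framing with that of its image in $F(\Sigma;\bK)$. Both follow from naturality of the restriction maps for the triple $(\Sigma,\pa\Sigma,P)$ and from $H^1(\pa_j\Sigma,\bullet_j;\bK)\cong H^1(\pa_j\Sigma;\bK)$. I would spell this out briefly via the commutative diagram relating \eqref{eq:SdP} and \eqref{eq:relS}.
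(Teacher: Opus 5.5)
Your proposal is correct and follows the paper's proof essentially step for step. The ``only if'' part comes from Lemma \ref{lem:PH}; the ``if'' part corrects a based framing $\xi_0$ by a class $u\in H^1(\Sigma,P;\bK)$ obtained from the surjectivity of $i^*_P$ in \eqref{eq:SdP}; and (2) follows because $u$ preserves all boundary rotation numbers exactly when $i^*_Pu=0$. You are in fact slightly more careful than the paper: you take $\xi_0$ in $F(\Sigma,P;\bK)$ (the paper writes $F(\Sigma;\bK)$), and you check explicitly, via Lemma \ref{lem:PH} applied to $\xi_0$, that $\rho-\rho^0$ lies in $N_\pa(\Sigma;\bK)$ before lifting it.
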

\begin{proof}
(1) The `only if' part follows from Lemma \ref{lem:PH}. 
In order to prove the `if' part, we take an element $\xi_0 \in F(\Sigma; \bK)$, 
since $\pa\Sigma \neq \emptyset$. 
From the exact sequence \eqref{eq:SdP}, 
there is an element $u \in H^1(\Sigma, P; \bK) $ such that $\langle i^*u, [\pa_j\Sigma]\rangle = \rho_j - \rot_{\xi_0}(\pa_j\Sigma)$ for each $0\leq j \leq n$. 
Then we have $\rot_{\xi_0+u}(\pa_j\Sigma) = \rho_j$ for each $0\leq j \leq n$.
This proves (1).\par
(2) Recall the set $F(\Sigma, P; \bK)$ is a $H^1(\Sigma, P; \bK)$-torsor. An element $u \in H^1(\Sigma, P; \bK)$ fixes each $\rot_\xi(\pa_j\Sigma)$ if and only if $i^*_Pu = 0$. This, together with \eqref{eq:SdP}, proves (2).
\end{proof}

For any $\rho \in H^1(\pa\Sigma; \bK)$ with $F(\Sigma, P, \rho; \bK)\neq\emptyset$ and any $\xi \in F(\Sigma, P, \rho; \bK)$, we define a cocycle $k_\xi: \Gamma \to H^1(\Sigma, \pa\Sigma; \bK)$ by
$$
k_\xi(\varphi) = \varphi\cdot \xi - \xi \in H^1(\Sigma, \pa\Sigma;\bK)
$$
for $\varphi \in \Gamma$, and denote by
$$
k_{\rho, \bK} := [k_\xi] \in H^1(\Gamma; H^1(\Sigma, \pa\Sigma;\bK))
$$
its cohomology class, which is independent of the choice of $\xi \in F(\Sigma, P, \rho; \bK)$. 
The inclusion $F(\Sigma, \pa\Sigma; \bK) \hookrightarrow F(\Sigma, P; \bK)$ is equivariant under the homomorphism $j^*_P$, and all of these are $\Gamma$-equivariant. 
Hence we have 
$(j^*_P)_*(k_{\rho, \bK}) = k^\natural_\bK \in H^1(\Gamma; H^1(\Sigma, P; \bK))$ and so 
\begin{equation}\label{eq:blift}
(j^*)_*(k_{\rho, \bK}) = k_\bK \in H^1(\Gamma; H^1(\Sigma; \bK)).
\end{equation}
from \eqref{eq:natlift}. The cohomology class $k_{\rho, \bK} \in H^1(\Gamma; H^1(\Sigma, \pa\Sigma; \bK))$ depends on the choice of $\rho \in H^1(\pa\Sigma; \bK)$. More precisely we have
\begin{lem}\label{lem:crho} Let $\rho$ and $\rho' \in H^1(\pa\Sigma; \bK)$ satisfy $F(\Sigma, P, \rho; \bK)\neq\emptyset$ and $F(\Sigma, P, \rho'; \bK)\neq\emptyset$, respectively. Then we have 
$$
k_{\rho', \bK} - k_{\rho, \bK} = \delta^*(\rho'-\rho) \in H^1(\Gamma; H^1(\Sigma, \pa\Sigma;\bK)),
$$
where $\delta^*$ is the connecting homomorphism with respect to the extension \eqref{eq:SdP} of $\Gamma$-modules.
\end{lem}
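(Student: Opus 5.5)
Fix $\xi \in F(\Sigma, P, \rho; \bK)$ and $\xi' \in F(\Sigma, P, \rho'; \bK)$. Both lie in the $H^1(\Sigma, P; \bK)$-torsor $F(\Sigma, P; \bK)$, so there is a unique $u := \xi' - \xi \in H^1(\Sigma, P; \bK)$. I will compare the two representing cocycles directly and then recognize the difference via the usual description of the connecting homomorphism.

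First step: compute the image of $u$ in $N_\pa(\Sigma;\bK)$. For each $j$, \eqref{eq:xixi'} applied to the boundary loop $\pa_j\Sigma$ gives $\rho'_j = \rot_{\xi'}(\pa_j\Sigma) = \rot_\xi(\pa_j\Sigma) + \langle u, [\pa_j\Sigma]\rangle = \rho_j + \langle i^*_P u, [\pa_j\Sigma]\rangle$. Hence $i^*_P u = \rho' - \rho$, so $u$ is a lift of $\rho'-\rho$ along the surjection $i^*_P$ in \eqref{eq:SdP}. (The difference $\rho'-\rho$ indeed lies in $N_\pa$, since both sums equal $\chi(\Sigma)$ by Lemma \ref{lem:nonempty}.)

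Second step: compute the difference of cocycles. For $\varphi \in \Gamma$,
$$
k_{\xi'}(\varphi) - k_\xi(\varphi) = (\varphi\cdot\xi' - \xi') - (\varphi\cdot\xi - \xi) = \varphi\cdot u - u = (\delta u)(\varphi),
$$
where the action on $F(\Sigma,P;\bK)$ is affine over the linear action on $H^1(\Sigma,P;\bK)$ (the $\Gamma$-action on $H^1(U\Sigma,\tilde P;\bK)$ is linear and $\varpi^*$ is equivariant). Thus $k_{\xi'} - k_\xi = \delta u$ as a cocycle with values in $H^1(\Sigma, P;\bK)$; but both $k_\xi$ and $k_{\xi'}$ take values in the submodule $j^*_P H^1(\Sigma,\pa\Sigma;\bK)$ (because $\Gamma$ fixes boundary rotation numbers), so $\delta u$ does too, i.e. it is a cocycle in $Z^1(\Gamma; H^1(\Sigma,\pa\Sigma;\bK))$.

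Third step: identify with $\delta^*$. The connecting homomorphism $\delta^*: H^0(\Gamma; N_\pa) \to H^1(\Gamma; H^1(\Sigma,\pa\Sigma;\bK))$ of \eqref{eq:SdP} is, by definition, obtained by lifting an invariant $\eta \in N_\pa^\Gamma$ to $u \in H^1(\Sigma,P;\bK)$ and taking the class of $\varphi \mapsto \varphi u - u$, viewed in $H^1(\Sigma,\pa\Sigma;\bK)$. Note $\Gamma$ fixes the boundary pointwise, so acts trivially on $H^1(\pa\Sigma;\bK)$ and $\rho'-\rho$ is invariant. Combining the first two steps gives $k_{\rho',\bK} - k_{\rho,\bK} = \delta^*(\rho'-\rho)$. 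The only delicate point is the sign convention for $\delta^*$ (the standard one $\varphi u - u$ matches here) and checking that the action is genuinely affine with $\varpi^*$-equivariance; otherwise the argument is formal.
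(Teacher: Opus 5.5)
Your proposal is correct and follows essentially the same argument as the paper: set $u=\xi'-\xi\in H^1(\Sigma,P;\bK)$, observe that $i^*_P u=\rho'-\rho$, and note that $k_{\xi'}-k_\xi=\delta u$ is the standard cocycle representing $\delta^*(\rho'-\rho)$. Your extra remarks make explicit two points the paper leaves implicit: that $k_\xi$ lands in $H^1(\Sigma,\pa\Sigma;\bK)$ because $\Gamma$ preserves the boundary rotation numbers, and that the $\Gamma$-action on framings is affine via the equivariance of $\varpi^*$.
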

\begin{proof} From Lemma \ref{lem:nonempty}, we have $\rho' - \rho \in N_\pa(\Sigma; \bK)$. 
Take $\xi \in F(\Sigma, P, \rho; \bK)$ and $\xi' \in F(\Sigma, P, \rho'; \bK)$ arbitrarily. Then the map $i^*_P$ maps $u := \xi'-\xi \in H^1(\Sigma, P; \bK)$ to $\rho' - \rho$. 
Hence we have $(\delta u)(\varphi) = \varphi\cdot u - u = (\varphi\cdot \xi' - \xi') - (\varphi\cdot\xi - \xi) = (k_{\xi'}-k_\xi)(\varphi)$ for any $\varphi \in \Gamma$. This proves the lemma. 
\end{proof}

As was proved in Lemma \ref{lem:coinvariants} (4), we have a natural isomorphism $\delta^*: \widetilde{H}^0(\pa\Sigma; \bK) \overset\cong \to H^1(\Sigma, \pa\Sigma; \bK)^\Gamma$. 
In particular, the subset $k_{\rho,\bK}\subset Z^1(\Gamma; H^1(\Sigma, \pa\Sigma; \bK))$ is not a $H^1(\Sigma, \pa\Sigma; \bK)$-torsor if $\pa\Sigma$ is not connected. Instead, we have the following.
\begin{lem}\label{lem:cf} We have a natural map $\zeta: H^1(\Sigma; \bK) \to Z^1(\Gamma; H^1(\Sigma, \pa\Sigma; \bK))$ which induces a morphism of exact sequences
$$
\begin{xymatrix}{
0 \ar[r] & \widetilde{H}^0(\pa\Sigma) \ar[r]^{\delta^*} \ar@{=}[d]& H^1(\Sigma, \pa\Sigma) \ar[r]^{j^*} \ar@{=}[d]& H^1(\Sigma)  \ar[r]^{i^*} \ar[d]^{\zeta}& N_\pa(\Sigma) \ar[r] \ar[d]^{\delta^*}& 0\\
0 \ar[r] & \widetilde{H}^0(\pa\Sigma) \ar[r]^{\delta^*} & H^1(\Sigma, \pa\Sigma) \ar[r]^-{\delta} & Z^1(\Gamma; H^1(\Sigma, \pa\Sigma))  \ar[r] & H^1(\Gamma; H^1(\Sigma, \pa\Sigma)) \ar[r] & 0,\\
}\end{xymatrix}
$$
where we omit the coefficients $\bK$. 
\end{lem}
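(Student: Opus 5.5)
The plan is to build $\zeta$ by lifting through the surjection $j^*_P$ and then taking a coboundary. Given $u \in H^1(\Sigma; \bK)$, I first choose a lift $\tilde u \in H^1(\Sigma, P; \bK)$ along the surjection $j'^*_P: H^1(\Sigma, P; \bK) \to H^1(\Sigma; \bK)$; it is surjective because $H^1(P)=0$ and $H^2(\Sigma,P)=0$. I then set
$$
\zeta(u)(\varphi) := \varphi\cdot\tilde u - \tilde u \in H^1(\Sigma, P; \bK), \quad \varphi \in \Gamma .
$$

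The first step is to check that $\zeta(u)$ takes values in $H^1(\Sigma, \pa\Sigma; \bK)$, embedded via $j^*_P$ as in \eqref{eq:SdP}. The key point is that $\Gamma$ fixes $\pa\Sigma$ pointwise, so it acts trivially on $H^1(\pa\Sigma; \bK)$ and hence on $N_\pa(\Sigma)$. Therefore $i^*_P(\varphi\cdot\tilde u - \tilde u) = 0$, and exactness of \eqref{eq:SdP} places $\zeta(u)(\varphi)$ in $H^1(\Sigma,\pa\Sigma)$. The cocycle condition is automatic, since $\zeta(u)$ is formally the coboundary $\delta\tilde u$.

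The second step is independence of the lift. Two lifts differ by an element of $\Ker j'^*_P = \widetilde H^0(P; \bK)$, which maps to $\widetilde H^0(\tilde P)$. Since $\Gamma$ fixes $P$ pointwise, it acts trivially on this kernel, so the coboundary does not change. The same reasoning shows that $\zeta$ is $\bK$-linear. I expect this step to be the main obstacle: one has to identify the kernel of $H^1(\Sigma,P)\to H^1(\Sigma)$ with boundary-supported classes and confirm that the $\Gamma$-action on it is trivial. This uses only that mapping classes fix $\pa\Sigma$ pointwise.

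The last step is to verify that both squares commute.

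\emph{Middle square.} For $w \in H^1(\Sigma,\pa\Sigma)$, I take $\tilde u = j^*_P w$ as the lift of $j^*w$. Then $\zeta(j^*w) = \delta w$, with values in $H^1(\Sigma,\pa\Sigma)$ by injectivity of $j^*_P$.

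\emph{Right square.} The connecting homomorphism of \eqref{eq:SdP} sends $\rho = i^*u$ to the class of $\varphi\mapsto\varphi\cdot\hat\rho - \hat\rho$ for any lift $\hat\rho \in H^1(\Sigma,P)$. Since $i^*_P\tilde u = i^*u$, the element $\tilde u$ is such a lift. Hence $[\zeta(u)] = \delta^*(i^*u)$, which is the same mechanism as in Lemma \ref{lem:crho}.

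Exactness of both rows is already known: the top row is \eqref{eq:relS}, and the bottom row comes from Lemma \ref{lem:coinvariants}(4) together with the definition of $H^1$. So the diagram is a morphism of exact sequences, and naturality follows since every step is canonical.
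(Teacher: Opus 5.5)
Your proposal is correct and is essentially the paper's argument read through Poincar\'e duality. The paper works in homology: it lifts $u \in H_1(\Sigma,\pa\Sigma)$ to $H_1(\Sigma,P)$, uses $\Gamma$-triviality on $\widetilde H_0(P)$ to get $\varphi\cdot\tilde u-\tilde u\in H_1(\Sigma)$, and uses $\Gamma$-triviality on the boundary classes generating $H_1(\pa\Sigma,P)$ for independence of the lift. You use the dual facts, about $N_\pa(\Sigma)$ and $\widetilde H^0(P)$, directly in cohomology; this avoids the duality identification and shows the right-hand square as the connecting homomorphism of \eqref{eq:SdP}, which the paper only asserts to be clear.
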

\begin{proof} 
We can identify $H^1(\Sigma) = H_1(\Sigma, \pa\Sigma)$ and $H^1(\Sigma, \pa\Sigma) = H_1(\Sigma)$ by the Poincar\'e duality. 
Hence it suffices to show that the map
$$
\zeta: 
u \in H_1(\Sigma, \pa\Sigma) \mapsto (\varphi \in \Gamma \mapsto \varphi \cdot u - u) \in Z^1(\Gamma; H_1(\Sigma))
$$
is well-defined. 
From the homology exact sequence of the pair $(\Sigma, P)$
$$
0 \to H_1(\Sigma) \to H_1(\Sigma, P) \overset{\pa_*}\to \widetilde{H}_0(P) \to 0 \quad (\text{exact}),
$$
we have $\pa_*(\varphi\cdot u - u) = 0$ for any $u \in H_1(\Sigma, P)$. 
Hence we may regard $\varphi\cdot u - u \in H_1(\Sigma)$. 
Then look at the homology exact sequence of the triple $(\Sigma, \pa\Sigma, P)$
$$
H_1(\pa\Sigma, P) \to H_1(\Sigma, P) \to H_1(\Sigma, \pa\Sigma) \to 0
\quad (\text{exact}).
$$
The homology group $H_1(\pa\Sigma, P)$ is generated by $[\pa_j\Sigma]$, $0 \leq j \leq n$, and $\varphi [\pa_j\Sigma] - [\pa_j\Sigma] = 0$. 
Hence the map 
$$
u \in H_1(\Sigma, \pa\Sigma) \mapsto \varphi\cdot u - u \in H_1(\Sigma)
$$
is well-defined. Now it is clear that $\varphi\cdot u - u$ is a cocycle with respect to $\varphi\in \Gamma$, and that the diagram commutes. This proves the lemma. 
\end{proof}
A cocycle similar to the cocycle $\varphi \in \Gamma \mapsto \varphi\cdot u - u \in H_1(\Sigma)$ appeared in our previous work \cite[\S3]{Kaw98}. 
\par
We conclude this paper by computing some stable twisted cohomology groups. 
By Apisa and Salter's computation \cite[Theorem A.4]{ApisaSalter}, 
the following lemma also holds for $\Sigma = \Sigma_{g, n+1}$ with $g \geq 3$ and $n+1 \geq 3$. 
\begin{lem}\label{lem:compute} Let $\bK$ be a commutative ring with unit, and suppose $g \geq 5$. Then\par
(1) $H^1(\Gamma; H^1(\Sigma; \bK)) = \bK$, and it is generated by the class $k_\bK$. \par
(2) $H^1(\Gamma; H^1(\Sigma, P; \bK)) = \bK$, and it is generated by the class $k^\natural_\bK$. 
\par
(3) We have an exact sequence
$$
0 \to N_\pa(\Sigma; \bK) \overset{\delta^*}\to 
H^1(\Gamma; H^1(\Sigma, \pa\Sigma; \bK)) \overset{(j^*)_*}\to 
H^1(\Gamma; H^1(\Sigma; \bK)) \to 0 \quad (\text{exact}).
$$
\end{lem}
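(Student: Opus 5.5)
The plan is to reduce everything to the known computation for $\Gamma_{g,1}$ with integral coefficients and then propagate it along the exact sequences \eqref{eq:relS} and \eqref{eq:SdP}, using Lemma \ref{lem:coinvariants} to control the invariants and coinvariants.

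\emph{Step (1).} First I will obtain $H^1(\Gamma; H^1(\Sigma;\bZ))\cong\bZ$ for $g\geq 5$, generated by $k_\bZ$. For $\Sigma_{g,1}$ this is Morita's computation \cite{Mo89} together with \cite[Proposition 4.1]{Mo97}. For general $\Sigma_{g,n+1}$ I would use either the stability/universal coefficient arguments in the spirit of \cite{KS1}, or a direct reduction: embed $\Sigma_{g,1}\subset\Sigma$ by capping, so that $H^1(\Sigma)$ is an extension of $H^1(\Sigma_{g,1})$ by a $\Gamma$-module on which the boundary-related part acts through point-pushing. A more robust route is to evaluate on Dehn twists. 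Any cocycle $c$ is determined by its values on twists about nonseparating curves and boundary-parallel curves, and Humphries-type relations (lantern, chain) force $c(T_C)=\lambda_C(\cdot[C])+\delta u(T_C)$ with $\lambda_C$ constant in $C$. This matches Lemma \ref{lem:DT} and \eqref{eq:cob}, so $c-\lambda k_\xi$ is a coboundary. The upshot is $H^1(\Gamma;H^1(\Sigma;\bK))=\bK\,k_\bK$. To pass from $\bZ$ to general $\bK$, I will use that $H^1(\Sigma;\bK)=H^1(\Sigma;\bZ)\otimes\bK$ with $H^1(\Sigma;\bZ)$ free, together with the universal coefficient theorem for group cohomology. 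This step needs $H_1(\Gamma;H^1(\Sigma;\bZ))$ and $H_0(\Gamma;H^1(\Sigma;\bZ))$ torsion-free in the relevant degree, and the latter is given by Lemma \ref{lem:coinvariants}(1). By Theorem \ref{thm:nontrivial}, $k_\bK$ has exact order matching $\bK$, so the generator is not lost.

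\emph{Step (3).} Apply the long exact sequence in $\Gamma$-cohomology to \eqref{eq:relS}, split into $0\to\widetilde H^0(\pa\Sigma)\to H^1(\Sigma,\pa\Sigma)\to M\to0$ and $0\to M\to H^1(\Sigma)\to N_\pa\to0$. The modules $\widetilde H^0(\pa\Sigma)$ and $N_\pa$ carry trivial $\Gamma$-actions. Lemma \ref{lem:coinvariants}(3) gives $H^1(\Sigma)^\Gamma=0$ for $g\geq1$, so $N_\pa=N_\pa^\Gamma$ injects into $H^1(\Gamma;M)$. Moreover $H^1(\Gamma;\bK)=\Hom(H_1\Gamma,\bK)=0$ for $g\geq3$, since $\Gamma$ is perfect. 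Hence $H^1(\Gamma;M)\to H^1(\Gamma;H^1(\Sigma))$ is surjective with kernel $N_\pa$, because the next term $H^1(\Gamma;N_\pa)=0$. For the first short exact sequence, $H^1(\Gamma;\widetilde H^0)=0$ and $H^2(\Gamma;\widetilde H^0(\pa\Sigma))=H^2(\Gamma;\bK)^{n}$. We also have $H^1(\Sigma,\pa\Sigma)^\Gamma=\widetilde H^0$ by Lemma \ref{lem:coinvariants}(4), so $M^\Gamma\to H^2(\Gamma;\widetilde H^0)$ is zero. It follows that $H^1(\Gamma;H^1(\Sigma,\pa\Sigma))\to H^1(\Gamma;M)$ is injective. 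Surjectivity is the main obstacle, since it requires the connecting map $H^1(\Gamma;M)\to H^2(\Gamma;\widetilde H^0)$ to vanish. I would show this directly by exhibiting lifts: the lift $k_{\rho,\bK}$ of $k_\bK$ from \eqref{eq:blift}, together with the classes $\delta^*(\nu)$ for $\nu\in N_\pa$ constructed through the map $\zeta$ of Lemma \ref{lem:cf}. These lift a generating set of $H^1(\Gamma;M)$, so the map is surjective. Identifying the composite injection with $\delta^*$ of Lemma \ref{lem:crho} and Lemma \ref{lem:cf} then yields (3).

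\emph{Step (2).} This is analogous, using \eqref{eq:SdP}: $H^1(\Gamma;H^1(\Sigma,P))$ sits between $H^1(\Gamma;H^1(\Sigma,\pa\Sigma))$ and $H^1(\Gamma;N_\pa)=0$. The image of $N_\pa$ under $\delta^*$ dies in $H^1(\Gamma;H^1(\Sigma,P))$, since these classes are coboundaries of elements of $H^1(\Sigma,P)$; this is exactly the content of Lemma \ref{lem:crho}. Quotienting (3) by $N_\pa$ then gives $\bK$, generated by $(j^*_P)_*k_{\rho,\bK}=k^\natural_\bK$.
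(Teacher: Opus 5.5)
\textbf{The gap is in (1), and your (2) and (3) both depend on it.} For arbitrary $\bK$ the statement comes down to knowing $H_1(\Gamma;H_1(\Sigma;\bZ))\cong\bZ$, and in particular that it is torsion-free. The right universal coefficient sequence comes from $H^1(\Sigma;\bK)=\Hom_\bZ(H_1(\Sigma;\bZ),\bK)$:
\[
0\to\mathrm{Ext}^1_\bZ(H_1(\Sigma;\bZ)_\Gamma,\bK)\to H^1(\Gamma;H^1(\Sigma;\bK))\to\Hom_\bZ(H_1(\Gamma;H_1(\Sigma;\bZ)),\bK)\to0 .
\]
Lemma \ref{lem:coinvariants}(1) kills the Ext term. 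You wrote it with the module $H^1(\Sigma;\bZ)$ instead. Its coinvariants are $H_1(\Sigma,\pa\Sigma;\bZ)_\Gamma\cong\bZ^n$, which is Lemma \ref{lem:coinvariants}(2), not (1).

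More importantly, you name the need for torsion-freeness of the $H_1(\Gamma;\,\cdot\,)$ term but never supply it. An integral statement $H^1(\Gamma;H^1(\Sigma;\bZ))\cong\bZ$, such as Morita's for $\Sigma_{g,1}$, only sees the rank of $H_1(\Gamma;H_1(\Sigma;\bZ))$. Any $p$-torsion there would produce classes in $H^1(\Gamma;H^1(\Sigma;\bZ/p))$ outside $\bZ/p\cdot k_{\bZ/p}$. The argument in the proof of Theorem \ref{thm:nontrivial} does show that $\lambda\mapsto\lambda k_\bK$ is injective, which is all your appeal to it can give. It does not give surjectivity, and surjectivity is the real content.

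For $n\geq1$ you do not even have the integral statement. The ``direct reduction'' is not carried out. The Dehn-twist route simply asserts the conclusion: that lantern and chain relations force $c(T_C)=\lambda(\cdot[C])+(\delta u)(T_C)$ with one $\lambda$ and one $u$ for all $C$.

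Tellingly, your argument never uses $g\geq5$. The paper needs that hypothesis exactly to invoke Harer's computation $H_1(\Gamma;H_1(\Sigma;\bZ))\cong\bZ$ \cite[Theorem 7.1]{H91}. It then uses the retraction $\Sigma_{g,1}\subset\Sigma_{g,n+1}\subset\Sigma_{g,1}$ (glue $\Sigma_{0,n+2}$, then cap $n$ disks), whose composite is the identity. This identifies the group for $\Sigma$ with the one for $\Sigma_{g,1}$, where Morita's result gives the generator $k_\bZ$, and naturality in $\bK$ then yields $k_\bK$.

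\textbf{Your (3) and (2), granting (1).} These are correct but organized differently from the paper.

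The paper proves (2) first, from $0\to\widetilde H^0(P;\bK)\to H^1(\Sigma,P;\bK)\to H^1(\Sigma;\bK)\to0$ together with $H^1(\Gamma;\bK)=0$. It then gets (3) from the long exact sequence of \eqref{eq:SdP}. Injectivity of $\delta^*$ there comes from a coinvariant computation showing that $H^1(\Sigma,\pa\Sigma;\bK)^\Gamma\to H^1(\Sigma,P;\bK)^\Gamma$ is an isomorphism.

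You instead split \eqref{eq:relS} through $M=\mathrm{Im}\,j^*$:
\begin{itemize}
\item You get injectivity of $N_\pa\to H^1(\Gamma;M)$ from $H^1(\Sigma;\bK)^\Gamma=0$.
\item You prove surjectivity of $H^1(\Gamma;H^1(\Sigma,\pa\Sigma))\to H^1(\Gamma;M)$ by explicit lifts: $k_{\rho,\bK}$ and the $\zeta$-classes of Lemma \ref{lem:cf}.
\item You then read off (2) as $\mathrm{coker}\,\delta^*$.
\end{itemize}
This is valid. Your sentence about $M^\Gamma\to H^2(\Gamma;\widetilde H^0)$ is misindexed, but the injectivity you want only needs $H^1(\Gamma;\widetilde H^0(\pa\Sigma;\bK))=0$.

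Your route avoids the coinvariant bookkeeping. In exchange, it needs a relative framing to exist (Lemma \ref{lem:nonempty}), and it uses the generation statement of (1) inside the surjectivity step.
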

\begin{proof} (1) Since $g \geq 5$, we have $H_1(\Gamma; H_1(\Sigma;\bZ)) = \bZ$ from \cite[Theorem 7.1]{H91}. We glue $\Sigma_{g,1}$ and $\Sigma_{0,n+2}$ along the boundary of $\Sigma_{g,1}$ to get the surface $\Sigma = \Sigma_{g,n+1}$, and cap $n$ disks along $\pa_j\Sigma$ for $1 \leq j \leq n$ to get another $\Sigma_{g,1}$. These inclusions induce a sequence 
$$
H_1(\Gamma_{g,1}; H_1(\Sigma_{g,1}; \bZ))
\to H_1(\Gamma; H_1(\Sigma; \bZ))
\to H_1(\Gamma_{g,1}; H_1(\Sigma_{g,1}; \bZ))
$$
whose composite is the identity. Since these three cohomology groups are isomorphic to $\bZ$, each of the maps in the sequence is an isomorphism. 
\par
From the universal coefficient theorem, we have an exact sequence
$$
0 \to \mathrm{Ext}^1_{\bZ}(H_1(\Sigma; \bZ)_\Gamma, \bK) 
\to H^1(\Gamma; H^1(\Sigma; \bK))
\to \Hom_\bZ(H_1(\Gamma; H_1(\Sigma; \bZ)), \bK) \to 0
\quad \text{(exact)}.
$$
In fact, if $P_* \to \bZ \to 0$ be a $\bZ[\Gamma]$-free resolution, then we have a natural isomorphism $\Hom_{\bZ[\Gamma]}(P_*, H^1(\Sigma; \bK))
= \Hom_\bZ(P_*\otimes_{\bZ[\Gamma]}H_1(\Sigma; \bZ), \bK)$. See, for example, \cite[Lemma 2.4]{KS1}, where we consider a slightly different situation. \par
As was shown in Lemma \ref{lem:coinvariants} (1), 
$H_1(\Sigma; \bZ)_\Gamma$ is $\bZ$-free. 
Consequently the inclusion $\Sigma \hookrightarrow \Sigma_{g,1}$ induces an isomorphism $H^1(\Gamma_{g,1}; H^1(\Sigma_{g,1}; \bZ)) \cong 
H^1(\Gamma; H^1(\Sigma; \bZ))$, where the former one is generated by the class $k_{\bZ}$ from Morita's computation \cite{Mo89}. Moreover we have $H^1(\Gamma; H^1(\Sigma; \bK)) = H^1(\Gamma; H^1(\Sigma; \bZ))\otimes\bK$ from what we showed. This proves (1).\par
(2)  We look at the exact sequence 
$$
0 \to \widetilde{H}^0(P; \bK) \to H^1(\Sigma, P; \bK) \to H^1(\Sigma; \bK) \to 0 \quad\text{(exact)},
$$
where the group $\Gamma$ acts on $\widetilde{H}^0(P; \bK)$ trivially. 
By Harer's computation \cite[Lemma 1.1]{H83}, we have $H_1(\Gamma; \bZ) = 0$ since $g \geq 3$. Hence we have 
$$
0 = H^1(\Gamma; \widetilde{H}^0(P; \bK)) \to 
H^1(\Gamma; H^1(\Sigma, P; \bK)) \to 
H^1(\Gamma; H^1(\Sigma; \bK)) = \bK \quad \text{(exact)},
$$
where the generator $k_\bK$ lifts to $H^1(\Gamma; H^1(\Sigma, P; \bK))$ by \eqref{eq:natlift}. This proves (2). \par
(3) Since $g \geq 3$ and the group $\Gamma$ acts on $N_\pa(\Sigma; \bK)$ trivially, the sequence \eqref{eq:SdP} induces a long exact sequence
\begin{equation}\label{eq:exactP}
H^1(\Sigma, \pa\Sigma)^\Gamma \to H^1(\Sigma, P)^\Gamma \to 
N_\pa(\Sigma) \to H^1(\Gamma; H^1(\Sigma, \pa\Sigma))
\to H^1(\Gamma; H^1(\Sigma, P)) \to 0\quad \text{(exact)},
\end{equation}
where we omit the coefficients $\bK$. 
Now we have $H_1(\Sigma; \bZ)_\Gamma = 0$ from Lemma \ref{lem:coinvariants} (1). From the right exactness of the functor $H_0(\Gamma; -)$, we have isomorphisms $H_1(\Sigma, \pa\Sigma; \bZ)_\Gamma \cong \widetilde{H}_0(\pa\Sigma; \bZ)_\Gamma$ and $H_1(\Sigma, P; \bZ)_\Gamma \cong \widetilde{H}_0(P; \bZ)_\Gamma$, while we have an isomorphism $\widetilde{H}_0(\pa\Sigma; \bZ)_\Gamma = \widetilde{H}_0(P; \bZ)_\Gamma$. 
Hence the left arrow in \eqref{eq:exactP} is an isomorphism. This together with (2) proves (3). 
\end{proof}

In this paper we confine ourselves to surfaces without punctures.
In the forthcoming paper \cite{KS4} jointly with A.\ Souli\'e, we will discuss stable twisted cohomology groups of the mapping class groups of surfaces with punctures. \par

%
%

\end{document}